\documentclass[a4paper, 12pt]{article}

\usepackage{subfiles}
\usepackage[toc,page]{appendix}
\usepackage[bottom]{footmisc}
\usepackage{microtype}
\usepackage{tocbibind}
\usepackage{stmaryrd}   
\usepackage{amsmath}
\usepackage{verbatim}
\usepackage{mathrsfs}
\usepackage{mathtools}
\usepackage{amsthm} 
\usepackage{amssymb} 
\usepackage{faktor}
\usepackage{array}

\usepackage[colorlinks={true},linkcolor={blue},citecolor={blue}, urlcolor={blue} ]{hyperref}
\pdfstringdefDisableCommands{%
    \def\mathbf#1{#1}%
}

\usepackage{braket}
\usepackage{transparent}
\usepackage{graphicx}
\usepackage{color}
\usepackage{rotating}
\usepackage{tikz}
\usepackage{tikz-cd}
\usepackage{authblk}
\usepackage[margin=2.5 cm]{geometry}
\usepackage{bm}

 % double underline
\newcommand{\uv}{\underline{v}}
\DeclareMathOperator{\uV}{\underline{\V}}

\DeclareMathOperator{\defeq}{\overset{\text{\tiny{def}}}{=}}

           % ring of integers

\DeclareMathOperator{\Z}{\mathbb{Z}}
\DeclareMathOperator{\Q}{\mathbb{Q}}
\DeclareMathOperator{\R}{\mathbb{R}}

\DeclareMathOperator{\V}{\mathbb{V}}

\DeclareMathOperator{\rad}{\text{rad}}

\DeclareMathOperator{\good}{\text{good}}
\DeclareMathOperator{\bad}{\text{bad}}
\DeclareMathOperator{\multi}{\text{multi}}
\DeclareMathOperator{\gen}{\text{gen}}
\DeclareMathOperator{\non}{\text{non}}

\DeclareMathOperator{\Vol}{\text{Vol}}
\DeclareMathOperator{\ur}{\text{ur}}

\DeclareMathOperator{\tmod}{\text{mod}}

\DeclareMathOperator{\Gal}{\text{Gal}}

\DeclareMathOperator{\GL}{\text{GL}}

\newcommand{\lagk}[1]{\langle #1 \rangle}
\newcommand{\lagl}{\langle l \rangle}

\numberwithin{equation}{section}

\theoremstyle{definition}

\newtheorem{thm}{Theorem}[section]
\newtheorem{prop}[thm]{Proposition}
\newtheorem{lem}[thm]{Lemma}
\newtheorem{cor}[thm]{Corollary}
\newtheorem{definition}[thm]{Definition}

\newtheorem{remark}[thm]{Remark}
\newtheorem{tiny-remark}{Remark}[thm]

\newtheorem{open-problem}[thm]{Open Problem}

\newtheorem*{thm-A}{Theorem A}
\newtheorem*{cor-B}{Corollary B}

%-----------------------------------------------------------------------------------------

\makeatletter

\title{The inter-universal Teichm\"uller theory and new Diophantine results over rational numbers. II}

\author{Zhong-Peng Zhou}
\date{}

\newcommand{\Addresses}{{% additional braces for segregating \footnotesize
\bigskip
%\footnotesize

(Zhong-Peng Zhou) \textsc{Institute for Theoretical Sciences, Westlake University, 
No. 600 Dunyu Road,
Xihu District, Hangzhou,
Zhejiang, 310030, P.R. China}
\par\nopagebreak
\textit{E-mail address}: 
\texttt{zhouzhongpeng@westlake.edu.cn}
}}

%--------------------------------------------------------------------------------------------------
\begin{document}

\maketitle

\begin{abstract}
[This is an older version of the paper, which will be updated soon.] 

In the present paper, we continue our research on the generalized Fermat equation $x^r + y^s = z^t$ with signature $(r, s, t)$, where $r, s, t \ge 2$ are positive integers such that $\frac{1}{r} + \frac{1}{s} + \frac{1}{t} < 1$.  
All known positive primitive solutions for the generalized Fermat equation when $\frac{1}{r} + \frac{1}{s} + \frac{1}{t} < 1$ are related to the Catalan solutions $1^n + 2^3 = 3^2$ and nine non-Catalan solutions.

By applying inter-universal Teichmüller theory and its slight modification in the case of elliptic curves over rational numbers, we deduce that the generalized Fermat equation $x^r + y^s = z^t$ has no non-trivial primitive solution except for those related to the Catalan solutions and nine non-Catalan solutions mentioned above, when $(r, s, t)$ is not a permutation of the following signatures:
\begin{itemize}
\item $(4,5,n)$, $(4,7,n)$, $(5,6,n)$, with $7 \le n \le 303$.
\item $(2,3,n)$, $(3,4,n)$, $(3,8,n)$, $(3,10,n)$, with $11\le n \le 109$ or $n\in \{113, 121\}$.
\item $(3,5,n)$, with $7\le n \le 3677$; $(3,7,n)$, $(3,11,n)$, with $11 \le n \le 667$.
\item $(3,m,n)$, with $13 \le m \le 17$, $m < n \le 29$; $(2,m,n)$, with $m \ge 5$, $n\ge 7$.
\end{itemize}

As a corollary, to solve the generalized Fermat equation $x^r + y^s = z^t$ with exponents $r,s,t \ge 4$, we are left with $244$ signatures $(r,s,t)$ up to permutation;
to solve the Beal conjecture, we are left with $2446$ signatures $(r,s,t)$ up to permutation.

\end{abstract}

\makeatletter
\@starttoc{toc}
\makeatother

\section*{Introduction}
\addcontentsline{toc}{section}{Introduction}
Let $ r, s, t \ge 2 $ be positive integers. The equation
\begin{equation} \label{GenFE} \tag{$\star$}
x^r + y^s = z^t, \;\text{with}\; x, y, z \in \mathbb{Z}
\end{equation}
is known as the generalized Fermat equation with signature $(r, s, t)$.
A solution $(x, y, z)$ to (\ref{GenFE}) is called non-trivial if $xyz \neq 0$, positive if $x, y, z \in \mathbb{Z}_{\ge 1}$, and primitive if $\gcd(x, y, z) = 1$.

Finding all the positive primitive solutions to (\ref{GenFE}) is a long-standing problem in number theory. The Pythagorean triples, which are the positive solutions to (\ref{GenFE}) with signature $(2, 2, 2)$, have been known since ancient times. Fermat's Last Theorem, proven by Wiles in 1994 \cite{Wiles}, states that (\ref{GenFE}) has no positive primitive solution with signature $(n, n, n)$ for $n \ge 3$. Catalan's conjecture, proven by Mih\u{a}ilescu in 2002 \cite{Mihilescu2004PrimaryCU}, states that if $x = 1$ in (\ref{GenFE}), then the only positive integer solution to $1 + y^s = z^t$ (with $s, t \ge 2$) is $(y, z, s, t) = (2, 3, 3, 2)$.

The behavior of positive primitive solutions to (\ref{GenFE}) is fundamentally determined by the size of the quantity
\[
\chi(r, s, t) = \frac{1}{r} + \frac{1}{s} + \frac{1}{t} - 1.
\]
Here, $\chi$ is the Euler characteristic of a certain stack associated with (\ref{GenFE}).

For the spherical case where $\chi > 0$, $(r, s, t)$ is a permutation of $(2, 2, t)$, $t \ge 2$, or $(2, 3, t)$, $3 \le t \le 5$. In this case, parametrizations for positive primitive solutions for these signatures can be found in Beukers \cite{spherical_case1}, Edwards \cite{spherical_case2}, and Cohen \cite{Cohen-GTM240}, Section 14.

For the Euclidean case where $\chi = 0$, $(r, s, t)$ is a permutation of $(2, 3, 6)$, $(2, 4, 4)$, or $(3, 3, 3)$. In this case, non-trivial primitive solutions for these signatures come from the Catalan solution $1^6 + 2^3 = 3^2$, cf. \cite{Bennett-Chen-Dahmen-Yazdani}, Proposition 6 for a proof.

For the hyperbolic case where $\chi < 0$, the number of primitive solutions for each fixed signature $(r, s, t)$ is finite, cf. Darmon-Granville \cite{Darmon-Granville}. 
For this case, only the Catalan solution $1^n + 2^3 = 3^2$ and the following nine non-Catalan solutions are currently known:
\begin{gather*}
    2^5 + 7^2 = 3^4, \quad 7^3 + 13^2 = 2^9, \quad
    2^7 + 17^3 = 71^2, \quad 3^5 + 11^4 = 122^2, \\  
    17^7 + 76271^3 = 21063928^2, \quad	
    1414^3 + 2213459^2 = 65^7, \quad
    9262^3 + 15312283^2 = 113^7, \\ 
    43^8 + 96222^3 = 30042907^2, \quad 33^8 + 1549034^2 = 15613^3.
\end{gather*}
Since all known solutions have $\min\{r, s, t\} \le 2$, Beal conjectured in 1993 \cite{Beal_Conj} that the generalized Fermat equation (\ref{GenFE}) has no positive primitive solution when $r, s, t \ge 3$.
This conjecture is known as the Beal conjecture, also known as the Mauldin conjecture and the Tijdeman-Zagier conjecture.

For many signatures $(r,s,t)$ with $\chi \le 0$, the generalized Fermat equation $x^r + y^s = z^t$ has been proved to have no other non-trivial primitive solution, except for the possible solutions related to the Catalan solution and the nine non-Catalan solutions listed above.
Some of these signatures needed in this paper are listed below, which is used to avoid repeated computation for some proven cases during the search for possible solutions:

\begin{itemize}
\setlength{\itemsep}{0pt}
\item $(n, n, n), n \ge 3$, cf. Wiles \cite{Wiles}, Taylor-Wiles \cite{Taylor-Wiles}, Mochizuki-Fesenko-Hoshi-Minamide-Porowski \cite{ExpEst}.

\item $(n, n, 2), n \ge 4$; $(n, n, 3), n \ge 3$, cf. Darmon-Merel \cite{Darmon-Merel}, Poonen \cite{Poonen}.

\item $(2, 3, n), n \in \{7, 8, 9, 10, 15\}$, cf. Poonen-Schaefer-Stoll \cite{Poonen-Schaefer-Stoll}, Bruin \cite{Bruin-16, Bruin-19}, Brown \cite{Brown_case_2_3_10}, Siksek-Stoll \cite{Siksek-Stoll_case_2_3_15}.

\item $(2, 4, n), n \ge 4$, cf. Ellenberg \cite{Ellenberg}, Bennett-Ellenberg-Ng \cite{Bennett-Ellenberg-Ng};
$(2, n, 4), n \ge 4$, Corollary of Bennett-Skinner \cite{Bennett-Skinner-8}, Bruin \cite{Bruin-18}.

\item $(2, 6, n), n \ge 3$, cf. Bennett-Chen \cite{Bennett-Chen};
$(2, n, 6), n \ge 3$, cf. Bennett-Chen-Dahmen-Yazdani \cite{Bennett-Chen-Dahmen-Yazdani}.

\item $(3, 3, n), n\ge 3$, cf. Chen-Siksek \cite{Chen-Siksek}, Zhou \cite{Diophantine_after_IUT_I}.

\item $(3, 4, 5)$, $(5, 5, 7)$, $(5, 7, 7)$, cf. Siksek-Stoll \cite{Siksek-Stoll}, Dahmen-Siksek \cite{Dahmen-Siksek}.

\item $(5, 5, q)$, primes $q \ge 11 > 3 \sqrt{5 \log_2(5)}$, cf. Bartolomé-Mih\u{a}ilescu \cite{semi_local_approximation_fermat_catalan_popular}.
\end{itemize}

In 2022, Mochizuki, Fesenko, Hoshi, Minamide, and Porowski \cite{ExpEst} established a $\mu_6$-version of Mochizuki's inter-universal Teichm\"uller theory (IUT theory), cf. \cite{IUTchI, IUTchII, IUTchIII, IUTchIV}, which managed to ``treat'' bad places that divide the prime $2$. They then verified the numerically effective versions of the Vojta, ABC, and Szpiro Conjectures over the rational numbers and imaginary quadratic number fields. As a result, they verified that Fermat’s Last Theorem (FLT) holds for prime exponents $> 1.615 \cdot 10^{14}$ — which is sufficient, combined with the results of Vandiver, Coppersmith, and Mih\u{a}ilescu-Rassias, to yield an unconditional new alternative proof of Fermat’s Last Theorem.

For the first time proving effective abc inequalities and containing a new proof of FLT, the paper \cite{ExpEst} sheds light on solving generalized Fermat equations. In the previous paper \cite{Diophantine_after_IUT_I}, by making slight modifications to the estimation in \cite{IUTchIV} and \cite{ExpEst}, we reduced the coefficient of the effective abc inequality presented in \cite{ExpEst}, and performed a preliminary study on the generalized Fermat equation $x^r + y^s = z^t$ with exponents $r, s, t \ge 3$.

In the present paper, in order to further study more situations, the signatures with $\chi < 0$ are divided into four cases. For each of these cases, we can use different classes of [modified] Frey-Hellegouarch curves in IUT theory to obtain certain upper bounds related to the non-trivial primitive solutions to (\ref{GenFE}).

Several methods are used in \cite{Diophantine_after_IUT_I} and then are generalized in the present paper, including:
\begin{itemize}
\item \textbf{Theoretical mathematical methods}:
(i) Primarily using slightly changed IUT theory, such as $2$-torsion initial $\Theta$-data, sharper effective abc inequalities over rational numbers and their applications to the generalised Fermat equations.
(ii) A little from analytic number theory related to prime number theorem, for upper bounds related to solutions, cf. the estimation in \cite{Diophantine_after_IUT_I}, Section 2.3 for an example.
\item \textbf{Computations using computers}: (i) Reducing upper bounds in many specific cases. 
For example, for $r,s,t \ge 8$ and positive primitive solutions $(x,y,z)$ to $x^r + y^s = z^t$, let $h \defeq \log(x^r y^s z^t)$. 
By using analytic number theory and effective abc inequalities, it is shown in \cite{Diophantine_after_IUT_I}, Theorem 4.4 that  $h < C_0 \defeq 10^6$. Then we can choose finitely concrete examples of $S$ and $k$ in \cite{Diophantine_after_IUT_I}, Lemma 4.3, to show that $h$ does not belong to the interval $(600, C_0)$. As a result, we can deduce that $h \le 600$. (ii) Searching for possible solutions via computer for large exponents, cf. \cite{Diophantine_after_IUT_I}, Corollary 4.6 for an example. In this paper, new algorithms based on thestructure of solutions are used to accelerating the speed of search.
\end{itemize}

It is worth noting that by choosing more concrete examples of $S$ and $k$ in \cite{Diophantine_after_IUT_I}, Lemma 4.3, we can show that $h$ does not belong to the interval $(600, C_0)$ for arbitrary large $C_0 > 0$. Since we can always obtain a [possibly large] upper bound $C_0$ for $h$ using analytic number theory, the value of $C_0$ is not the core of the proof. Therefore, we shall omit the process of proving ``$h < C_0$'' in the several cases in this paper, and claim that we can always do the similar things presented above to provide ``a valid and rigorous full proof”, cf. Remark \ref{1-3-rmk:  local-global inequalities} for a discussion about this.

The main results of the present paper are as following:
\begin{thm-A}
The generalized Fermat equation $x^r + y^s = z^t$ has no non-trivial primitive solution except for the Catalan solutions and the nine non-Catalan solutions enumerated earlier, when $(r, s, t)$ does not correspond to any permutation of the following cases:
\begin{itemize}
\item $(4,5,n)$, $(4,7,n)$, $(5,6,n)$, with $7 \le n \le 303$.
\item $(2,3,n)$, $(3,4,n)$, $(3,8,n)$, $(3,10,n)$, with $11\le n \le 109$ or $n\in \{113, 121\}$.
\item $(3,5,n)$, with $7\le n \le 3677$; $(3,7,n)$, $(3,11,n)$, with $11 \le n \le 667$.
\item $(3,m,n)$, with $13 \le m \le 17$, $m < n \le 29$; $(2,m,n)$, with $m \ge 5$, $n\ge 7$.
\end{itemize}
\end{thm-A}

\begin{cor-B}
To solve the generalized Fermat equation $x^r + y^s = z^t$ with exponents $r,s,t \ge 4$, we are left with $244$ signatures $(r,s,t)$ up to permutation;
to solve the Beal conjecture, we are left with $2446$ signatures $(r,s,t)$ up to permutation.
\end{cor-B}

It is worth noting that Catalan's conjecture is used in this paper to get upper bounds for the exponents; with further analysis of upper bounds and more computations for searching solutions, an expanded set of signatures including the permutations of $(4, 7, n)$, $(5, 6, n)$ are likely to be solved.

The present paper is organized as follows. In Section 1, we make preparations for the application of IUT Theory in three aspects: the ``$2$-torsion version'' of IUT theory, the arithmetic of elliptic curves for the construction of initial $\Theta$-data, and the procedure to deduce ``upper bounds'' from ``partial inequalities''. 
Such procedure is a generalization of the methods in \cite{Diophantine_after_IUT_I}.

 In Sections 2-4, we research on the generalized Fermat equation (\ref{6-GenFE}) with each of the above four cases of signatures, using some classes of [modified] Frey-Hellegouarch curves:
\begin{itemize}
\item For general signatures $(r, s, t)$ with $r, s, t \ge 4$, the curve $Y^2 + XY = X^3 + \frac{b-a-1}{4}\cdot X^2 - \frac{ab}{16}\cdot X$, 
where $(a,b,c)$ is a triple of non-zero coprime integers such that $a+b = c$, $4\mid (a+1)$ and $16\mid b$, cf. Section 2 for details.
\item For permutations of $(2, 3, t)$ with $t \ge 7$, the curve $Y^2 = X^3 + 3bX + 2a$, 
where $(a,b,c)$ is a triple of non-zero coprime integers such that $a^2 + b^3 = c$, cf. Section 3 for details.
\item For permutations of $(3, r, s)$ with $r \ge 3$, $s \ge 4$, the curve $Y^2 + 3cXY + aY= X^3$,
where $(a,b,c)$ is a triple of non-zero coprime integers such that $a + b = c^3$, cf. Section 4 for details.
\item For permutations of $(2, r, s)$ with $r \ge 4$, $s \ge 5$, the curve $Y^2 = X^3 + 2cX^2 + aX$, 
where $(a,b,c)$ is a triple of non-zero coprime integers such that $a + b = c^2$, the related work will be undertaken in future studies.
\end{itemize}

Section 5 is a summary of the conclusions.

\section*{Acknowledgements}
The main results of the present paper were obtained during the author's stay at Westlake University from September 2024 to February 2025.
In particular, the discussions with Ivan Fesenko and later with Shinichi Mochizuki about the reason of adding 3-torsion points in the definition of initial $\Theta$-data were crucial to the completion of this paper. 
The author also benefited from the talks given by Arata Minamide, Emmanuel Lepage, and Preda Mihăilescu at Westlake University, which provided some inspiration for this work. 
The author is especially grateful to Ivan Fesenko for his encouragements, discussions, and support during his stay, which greatly surpassed the author's expectations regarding the support received.

 \setcounter{section}{-1}
 \section{Notations and Conventions}

\hfill \break
\noindent\textbf{Numbers:}

Let $N$ be a non-zero integer,  $k$ be a positive integer. For each prime number $p$, write $v_p(N)$ for the $p$-adic valuation of $N$.
The radical of $N$, denoted by $\rad(N)$, is the product of distinct prime divisors of $N$;
we shall write $N_{(k)}$ for the ``coprime to $k$ part'' of $N$, 
and write $N^{\lagk{k}}$ for the product of $p^{v_p(N)}$ for all possible prime number $p$ such that $v_p(N)$ is divided by $k$. 
Hence we have
\begin{align*}
\rad(N) \defeq \prod_{p:\, p\mid N} p, \quad 
N_{(k)} \defeq \prod_{p:\, p\nmid k} p^{v_p(N)}, \quad
N^{\lagk{k}} \defeq \prod_{p:\, k\mid v_p(N)} p^{v_p(N)} . 
\end{align*}

Let $p$ be a prime number.
We shall write $\Q$ (respectively, $\R$; $\Q_p$) for the field of rational numbers (respectively, real numbers; $p$-adic rational numbers), 
and write $\Z$ for the ring of rational integers.
We shall call finite extension of $\Q$ number field, call finite extension of $\Q_p$ $p$-adic local field.
We shall write $\Z_{\ge 1}$ for the set of positive integers.

\hfill \break
\noindent\textbf{Curves:}

Let $K$ be a field of characteristic zero with an algebraic closure $\overline{K}$,
$E$ be an elliptic curve defined over $K$, $n\ge 1$ be an positive integer. 
Then we shall write $E[n]$ for the group of $n$-torsion points of $E_{\overline{K}}\defeq E\times_K \overline{K}$, and write $K(E[n])\subseteq \overline{K}$ for the ``$n$-torsion point field" of $E$, 
i.e. the field generated by $K$ and the coordinates of the points in $E[n]$.        

\hfill \break
\noindent\textbf{Ramification Datasets:}

Ramification dataset is a concept introduced in \cite{Diophantine_after_IUT_I}, Section 1.3, which encodes the ramification information of initial $\Theta$-data.
This concept will be used frequently in the present paper.

\section{Preparations for IUT Theory Applications}

\subsection{$2$-torsion version of IUT theory}

We shall introduce the concept of $2$-torsion initial $\Theta$-data.

\begin{definition} \label{1-def: mu_2-initial Theta-Data}
We shall refer to $2$-torsion initial $\Theta$-data as any collection of data
\begin{align*}
(\overline{F}/F,X_F,l,\underline{C}_K, \uV,\V_{\tmod}^{\bad}, \underline{\epsilon})
\end{align*}
satisfying the following conditions:
\begin{itemize}
\item \cite{IUTchI}, Definition 3.1, (a), (c), (d), (e), (f).
\item The ``$2$-torsion version'' of \cite{IUTchI}, Definition 3.1, (b), i.e., the condition obtained by replacing, in \cite{IUTchI}, Definition 3.1, (b), ``$2 \cdot 3$-torsion points of $E_F$ are rational over $F$'', by
``2-torsion points of $E_F$ are rational over $F$, and $E_F$ has a model over $F_{\tmod}$''.
\end{itemize}
\end{definition}

\begin{tiny-remark} \label{1-1-rmk1}
It is worth noting that the results of \cite{IUTchI,IUTchII,IUTchIII,IUTchIV} still hold for its $2$-torsion version, i.e. by replacing initial $\Theta$-data with $2$-torsion initial $\Theta$-data. 
In these papers, the condition ``$3$-torsion points of $E_F$ are rational over $F$'', i.e. ``$F(E[3])=F$'' is only used in \cite{IUTchI}, Remark 3.1.5, \cite{IUTchIV}, Theorem 1.10 and \cite{IUTchIV}, Corollary 2.2 via \cite{IUTchIV}, Proposition 1.8, (iv), (v).

As a consequence of ``$F(E[3])=F$'', it is stated that $K$ is Galois over $F_{\tmod}$ at the beginning of \cite{IUTchI}, Remark 3.1.5.
This still holds for its $2$-torsion version. 
Since $E_F$ has a model $E$ defined over $F_{\tmod}$, we can put $L = F_{\tmod}(E[l])$, then $L/F_{\tmod}$ is Galois. Since $F/F_{\tmod}$ is Galois by the definition of [$2$-torsion ]initial $\Theta$-data, we can see that $K = F(E_F[l]) = F\cdot L$ is Galois over $F_{\tmod}$.

The condition ``$F(E[3])=F$'' is also used to show that $E_F$ has a model over $F_{\tmod}$ in \cite{IUTchIV}, Theorem 1.10 and \cite{IUTchIV}, Corollary 2.2, after an initial $\Theta$-data is constructed. 
This is one of the conditions in the definition of $2$-torsion initial $\Theta$-data.

Hence the results of \cite{IUTchI,IUTchII,IUTchIII,IUTchIV} still hold for its $2$-torsion version.
\end{tiny-remark}

\begin{tiny-remark} \label{1-1-rmk2}
By applying Definition \ref{1-def: mu_2-initial Theta-Data},
the $2$-torsion versions of Section 1.2 and Section 1.3 of \cite{Diophantine_after_IUT_I} remains valid without essential changes, with the following exceptions:

(i) All the ``$\mu_6$-initial $\Theta$-data'' should be replaced by ``$2$-torsion initial $\Theta$-data''.

(ii) In the $2$-torsion version of \cite{Diophantine_after_IUT_I}, Lemma 1.6, the condition ``$F=F(E_F[6])$'' should be replaced by ``$F=F(E_F[2])$'', 
and the condition ``$\V_{\tmod}^{\bad} \subseteq \V_{\tmod}^{\non}$ is a nonempty set of nonarchimedean valuations of $F_{\tmod}$'' should be replaced by ``$\V_{\tmod}^{\bad} \subseteq \V_{\tmod}^{\non}$ is a nonempty set of nonarchimedean valuations of $F_{\tmod}$ of odd residue
characteristic''.

(iii) The proof of the $2$-torsion version of \cite{Diophantine_after_IUT_I}, Proposition 1.9 follows from the discussion in Remark \ref{1-1-rmk1}.
Note that the $2$-torsion version of \cite{Diophantine_after_IUT_I}, Corollary 1.13 also holds, since we only use the condition ``$F=F(E_F[2])$'' rather than the condition ``$F=F(E_F[6])$'' in the proof of \cite{Diophantine_after_IUT_I}, Corollary 1.13. 

(iv) For a $\mu_6$- or $2$-torsion initial $\Theta$-data $\mathfrak D  =(\overline{F}/F,X_F,l,\underline{C}_K, \uV,\V_{\tmod}^{\bad}, \underline{\epsilon})$ with $F_{\tmod} = \Q$, 
we shall say $\mathfrak D$ is \textbf{of type} $\bm{(l, N, N')}$ if $N$ is the denominator of the $j$-invarient of $j(E_F) \in F_{\tmod} = \Q$, and $N' = \prod_{p:\, v_p \in \V_{\tmod}^{\bad}} p^{\max\{0, v_p(N) \} }$.

\begin{cor} \label{1-cor: The log-volume of ramification dataset}
Let $\mathfrak{R}$ be a ramification dataset with base prime $l\ge 5$ and base index $e_0$. Then there exists an algorithm to compute a real number $\Vol(\mathfrak{R}) \ge 0$ that depends only on $\mathfrak{R}$, 
such that for any $\mu_6$-initial $\Theta$-data or $2$-torsion initial $\Theta$-data $\mathfrak{D}$ of type $(l,N,N')$ admitting $\mathfrak{R}$, we have
\begin{equation*}\small
\frac{1}{6}\log(N') \le  \frac{l^2+5l}{l^2+l-12}\cdot \big( (1-\frac{1}{e_0  l})\cdot \log\rad(N) - \frac{1}{e_0}(1- \frac{1}{l}) \log\rad(N^{\lagl})  \big) + \Vol(\mathfrak{R}),
\end{equation*}

\end{cor}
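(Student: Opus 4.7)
The plan is to derive the inequality from the $2$-torsion version of the local-global inequality encoded in \cite{Diophantine_after_IUT_I}, Corollary 1.13, combined with a prime-by-prime analysis of the log-different of $K/F$ at the bad primes, where $K = F(E_F[l])$. Since $\mathfrak{D}$ is of type $(l, N, N')$ and $F_{\tmod} = \Q$, the integer $N$ is the denominator of $j(E_F)$, so for every prime $p \mid N$ the elliptic curve $E_F$ has multiplicative reduction with $v_p(j(E_F)^{-1}) = v_p(N)$, and $N'$ selects exactly those multiplicative primes lying in $\V_{\tmod}^{\bad}$. By Remark \ref{1-1-rmk1} and Remark \ref{1-1-rmk2}, the effective IUT estimate continues to hold in the $2$-torsion setting, bounding $\tfrac{1}{6}\log(N')$ in terms of the arithmetic degree of the log-different of $K/F$ above $\V_{\tmod}^{\bad}$, plus corrections supported at $l$, at the archimedean places, and at primes dictated by $\uV$.

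The key observation is that the expression in parentheses on the right-hand side decomposes prime-by-prime: for each $p \mid N$ the coefficient of $\log p$ equals $1 - \tfrac{1}{e_0 l}$ when $l \nmid v_p(N)$, and $1 - \tfrac{1}{e_0}$ when $l \mid v_p(N)$. These two values are precisely the contribution of the local log-different $\tfrac{1}{[K:F]}\log\mathfrak{d}_{K_{\mathfrak p}/F_{\mathfrak p}}$ at a multiplicative prime in the two possible ramification scenarios: when $l \nmid v_p(N)$, the field $K_{\mathfrak p}/F_{\mathfrak p}$ is totally ramified of degree divisible by $l$, because adjoining the $l$-torsion to the associated Tate curve requires extracting an $l$-th root of the Tate parameter whose valuation is not a multiple of $l$; when $l \mid v_p(N)$, the Tate parameter is already an $l$-th power up to units, so $K_{\mathfrak p}/F_{\mathfrak p}$ is unramified beyond the base index $e_0$ encoded in $\mathfrak R$.

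The universal factor $\tfrac{l^2+5l}{l^2+l-12} = \tfrac{l(l+5)}{(l-3)(l+4)}$ is the coefficient in front of the arithmetic degree of the log-different in the effective Diophantine inequality of $2$-torsion IUT theory; it depends only on $l$, not on $\mathfrak R$ or $\mathfrak D$. Once the above prime-by-prime substitution has been carried out, all contributions that do not take the form ``coefficient times $\log p$ at a bad multiplicative prime'' --- namely those coming from the prime $l$, the archimedean places, the primes in $\uV$ fixed by $\mathfrak R$, and the bounded boundary corrections from \cite{Diophantine_after_IUT_I}, Corollary 1.13 --- depend only on $\mathfrak R$. I would then define $\Vol(\mathfrak R)$ to be their explicit, algorithmically computable sum; its non-negativity follows from the non-negativity of each local log-different and of the IUT correction terms.

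The main obstacle is verifying uniformity of the local computation at primes of $F$ above $l$: the $l$-adic ramification of $K/F$ may be wild, and its contribution must be absorbed entirely into $\Vol(\mathfrak R)$. This is legitimate because a ramification dataset, by construction, records the relevant reduction types and residue characteristics at all primes in $\uV$ together with the base prime $l$, so the local log-volumes at these places are functions of $\mathfrak R$ alone. With this finiteness secured, the remainder of the argument is a bookkeeping comparison with the $2$-torsion analogue of \cite{Diophantine_after_IUT_I}, Corollary 1.13, and a direct algorithm computing $\Vol(\mathfrak R)$ can be extracted from the data stored in $\mathfrak R$.
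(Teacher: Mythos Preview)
Your proposal is correct and rests on the same input as the paper: the $\mu_6$-case is literally \cite{Diophantine_after_IUT_I}, Corollary~1.13, and the $2$-torsion case follows from Remark~\ref{1-1-rmk2}(iii), which asserts that the $2$-torsion analogue of that corollary holds verbatim. The paper's proof is this one-line citation and nothing more.

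The prime-by-prime Tate-curve analysis you outline (identifying the coefficients $1-\tfrac{1}{e_0 l}$ versus $1-\tfrac{1}{e_0}$ with the two ramification scenarios at multiplicative primes, absorbing the $l$-adic, archimedean, and $\uV$-supported contributions into $\Vol(\mathfrak{R})$, etc.) is not incorrect, but it is a sketch of how one would \emph{prove} Corollary~1.13 in the earlier paper, not additional work required here. The displayed inequality in the present corollary is already the conclusion of Corollary~1.13, and the algorithm for $\Vol(\mathfrak{R})$ is the one constructed there; once you invoke that result and its $2$-torsion extension, there is nothing left to do.
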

\begin{proof}
The corollary follows from \cite{Diophantine_after_IUT_I}, Corollary 1.13 and Remark \ref{1-1-rmk2}, (iii).
\end{proof}

We shall also provide a $2$-torsion version of \cite{Diophantine_after_IUT_I}, Proposition 2.5 as following.
\end{tiny-remark}

\begin{prop} \label{1-prop: construction of mu_2 initial Theta-data}
Let $E$ be an elliptic curve defined over $\Q$ with $j$-invariant $j(E) \in \Q$;
$N$ be the denominator of $j(E)$;
$F$ be a number field which is Galois over $\Q$;
$l\ge 5$ be a prime number such that $l\nmid [F:\Q]$;
$X_F$ be the punctured elliptic curve associated to $E_F \defeq E \times_{\Q} F$.
Suppose that:
\begin{itemize}
\item[(a)] $\sqrt{-1}\in F$, $F(E[2]) = F$, $E_F$ is semi-stable, and $F \subseteq \Q(E[n])$ for some positive integer $l\nmid n$. 
\item[(b)] $j(E)\notin \{0,2^6\cdot 3^3,2^2\cdot 73^3\cdot 3^{-4},2^{14}\cdot 31^3\cdot 5^{-3}\}$.
\item[(c)] We have $l\ge 23$ and $l\neq 37,43,67,163$; or $E$ is semi-stable and $l\ge 11$; or $N$ is not a power of $2$, $l\ge 11$ and $l\neq 13$.
\item[(d)] We have $N'_{2l} \neq 1$, where $N'_{2l} \defeq N_{(2l)} / N_{(2l)}^{\lagl} = \prod_{p:\, p\nmid 2l,\, l\nmid v_p(N)} p^{v_p(N)}$.
\end{itemize} 
Then there exists a $2$-torsion initial $\Theta$-data $$\mathfrak D(E,F,l,2\text{-tor}) = (\bar{F}/F,X_F,l,\underline{C}_K, \uV,\V_{\tmod}^{\bad}, \underline{\epsilon}),$$ 
with $\log(\mathfrak q) = \log(N'_{2l})$ [cf. the notation of the $2$-torsion version of \cite{Diophantine_after_IUT_I}, Definition 1.7, (ii)].
\end{prop}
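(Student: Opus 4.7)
The plan is to mirror the construction of $\mu_6$-initial $\Theta$-data given in \cite{Diophantine_after_IUT_I}, Proposition 2.5, while keeping track of the modifications recorded in Definition \ref{1-def: mu_2-initial Theta-Data} and Remark \ref{1-1-rmk2}. Setting $F_{\tmod} \defeq \Q$, the conditions of \cite{IUTchI}, Definition 3.1, (a) and (c) follow at once from the hypotheses on $F$ and $l$. The $2$-torsion version of condition (b) also holds: $F(E[2]) = F$ is supplied by hypothesis (a), and since $E$ is already defined over $\Q$ with $j(E)\in\Q$, $E$ itself is the required model of $E_F$ over $F_{\tmod}=\Q$. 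This is exactly where the passage from the ``$\mu_6$-version'' to the ``$2$-torsion version'' pays off: no auxiliary hypothesis ``$F(E[3])=F$'' is needed, and the model-over-$F_{\tmod}$ condition is free.

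For the remaining data $\underline{C}_K$, $\uV$, $\V_{\tmod}^{\bad}$, $\underline{\epsilon}$, I would follow the construction of \cite{Diophantine_after_IUT_I}, Proposition 2.5. First take
\[
\V_{\tmod}^{\bad} \defeq \{\, v_p \in \V_{\tmod}^{\non} \ :\ p\nmid 2l,\ v_p(N)\ge 1,\ l \nmid v_p(N) \,\}.
\]
Hypothesis (d), i.e.\ $N'_{2l}\neq 1$, guarantees that this set is nonempty; by its very definition every $v_p\in \V_{\tmod}^{\bad}$ has odd residue characteristic, as required by Remark \ref{1-1-rmk2}, (ii). At each $v_p \in \V_{\tmod}^{\bad}$ one has $v_p(j(E)) = -v_p(N) < 0$ with $l\nmid v_p(N)$, so $E$ acquires multiplicative (Tate) reduction with Tate parameter of non-$l$-divisible valuation---precisely the ``bad'' condition needed. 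The remaining data $\uV$, $\underline{C}_K$, and $\underline{\epsilon}$ are then chosen exactly as in loc.\ cit., using $\sqrt{-1}\in F$ [for the $\pm 1$-related sign choices in the definition of $\underline{\epsilon}$] and $F\subseteq \Q(E[n])$ for some $l\nmid n$ [to control the Galois action relevant to $\underline{\epsilon}$].

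The technical core of the argument is the verification that the mod-$l$ Galois representation $\rho_{E,l}: G_\Q \to \GL_2(\F_l)$ on $E[l]$ has image large enough for the construction of the $\mu_l$-covering $\underline{C}_K \to X_F$ prescribed in \cite{IUTchI}, Definition 3.1, (d) and (f), i.e., that this image is neither contained in a Borel, a split or non-split Cartan, nor in an exceptional subgroup. Hypothesis (b) excludes the four $j$-invariants corresponding to the CM and ``sporadic'' configurations that would make such containment possible, while hypothesis (c) refines the range of $l$ so that, combined with Mazur's theorem on rational $l$-isogenies and Serre's analysis of exceptional images, $\rho_{E,l}$ is guaranteed to be irreducible with image as large as required. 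This is the main obstacle: the three alternative clauses in (c) (``$l\ge 23$ and $l\neq 37,43,67,163$''; ``$E$ semi-stable, $l\ge 11$''; ``$N$ not a power of $2$, $l\ge 11$, $l\ne 13$'') must each be checked against the list of exceptional isogeny primes using the excluded $j$-invariants from (b).

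Finally, by the choice of $\V_{\tmod}^{\bad}$ above, the quantity $\log(\mathfrak q)$ in the notation of the $2$-torsion version of \cite{Diophantine_after_IUT_I}, Definition 1.7, (ii), equals $\sum_{v_p\in\V_{\tmod}^{\bad}} v_p(N)\log(p)$, which by the definition $N'_{2l} = \prod_{p\nmid 2l,\, l\nmid v_p(N)} p^{v_p(N)}$ is precisely $\log(N'_{2l})$, yielding the claimed equality.
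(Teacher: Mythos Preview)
Your proposal is correct and follows essentially the same approach as the paper: both argue by mirroring the proof of \cite{Diophantine_after_IUT_I}, Proposition 2.5, with the key modification being the definition $\V_{\tmod}^{\bad} \defeq \{v_p \in \V_{\Q}^{\non}: p\nmid 2l,\ l \nmid v_p(N)\}$ (excluding primes above $2$ to ensure odd residue characteristic, as required in Remark \ref{1-1-rmk2}, (ii)) and the replacement of $N'_l$ by $N'_{2l}$. The paper's proof is terser---it simply cites the $2$-torsion version of \cite{Diophantine_after_IUT_I}, Lemma 1.6 as the enabling tool---whereas you spell out more of the verification, but the content is the same.
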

\begin{proof}
By making use of the $2$-torsion version of \cite{Diophantine_after_IUT_I}, Lemma 1.6, the proof of Proposition \ref{1-prop: construction of mu_2 initial Theta-data} is similar to the proof of \cite{Diophantine_after_IUT_I}, Proposition 2.5, except that we shall define $\V_{\tmod}^{\bad} \defeq \{v_p \in \V_{\Q}^{\non}: p\nmid 2l,\; l \nmid v_p(N) \}$ and replace ``$N'_l$'' by ``$N'_{2l}$''.
\end{proof}

\subsection{Arithmetic of elliptic curves}

\begin{lem} \label{1-2-lem: basic properties}
Let $p$ be a prime number, $E$ be an elliptic curve defined over $\Q_p$ with $j$-invariant $j(E) \in \Q_p$.
For each positive integer $n$, write $K_n \defeq \Q_p(E[n])$, and write $e_p(n)$ for the ramification index of $K_n$ over $\Q_p$.

(i) If $v_p(j(E)) \ge 0$, then $E$ has potentially good reduction;
if $v_p(j(E)) < 0$, then $E$ has potentially multiplicative reduction;
for each $n\ge 1$, we have $\mu_n \subseteq K_n$, and $K_n$ is Galois over $\Q_p$,
where $\mu_n$ represents the $n$-th roots of unity;
for coprime integers $m,n$, we have $e_p(m), e_p(n) \mid e_p(mn)$, and $e_p(mn) \mid  e_p(m) \cdot e_p(n)$.

(ii) We have $(p-1) \mid e_p(p)$ and $e_p(n) \mid \#\GL(2,\Z/n\Z)$ for $n\ge 1$.
If $E$ has potentially good reduction, then for $n\ge 1$ such that $p\nmid n$, we have $e_2(n) \mid 24$ for $p = 2$; $e_3(n) \mid 12$ for $p = 3$, and $e_p(n) \mid 6$ for $p\ge 5$.
If $E$ has potentially multiplicative reduction, then for $n\ge 1$ such that $p\nmid n$,  we have $e_p(n) \mid 2n$, $e_p(pn) \mid 2(p-1)n$ for $p\ge 2$, and $e_2(4n) \mid 8n$.

(iii) If $n \ge 3$ and $p\nmid n$, then $E_{K_n} \defeq E\times_{\Q_p} K_n$ has semi-stable reduction, i.e. has good or multiplicative reduction.

(iv) If $E$ has good reduction, then $e_p(p) \in \{p-1, p(p-1), p^2-1\}$, and for $p\nmid n$, we have $e_p(n) = 1$; 
if $E$ has multiplicative reduction, then $e_p(p) = (p-1)\cdot p / \gcd(p,v_p(j(E)) \in \{p-1, p(p-1)\}$, and for $p\nmid n$, we have $e_p(n) = n / \gcd(n,v_p(j(E)) \mid n$.
\end{lem}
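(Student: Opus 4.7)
The lemma assembles standard facts about elliptic curves and Galois representations over $p$-adic local fields, and my approach is to dispatch each part with a separate classical tool.

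For (i), I would invoke Tate's criterion for potentially multiplicative reduction, which reads $v_p(j(E)) < 0$ precisely when $E$ admits a Tate uniformization after a finite extension, and otherwise $E$ acquires good reduction after such an extension. The containment $\mu_n \subseteq K_n$ is a direct consequence of the Galois-equivariance of the Weil pairing $E[n] \times E[n] \to \mu_n$. The extension $K_n/\Q_p$ is Galois because $E$ itself is defined over $\Q_p$, so every automorphism of $\overline{\Q_p}/\Q_p$ preserves $E[n]$. For coprime $m$ and $n$, we have $E[mn] = E[m] \oplus E[n]$ and hence $K_{mn} = K_m K_n$, whence the divisibility statements for $e_p$ follow by elementary ramification theory in a compositum.

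For (ii), the divisibility $(p-1)\mid e_p(p)$ follows from $\mu_p \subseteq K_p$ together with the fact that $\Q_p(\mu_p)/\Q_p$ is totally ramified of degree $p-1$; the bound $e_p(n) \mid \#\GL(2,\Z/n\Z)$ follows from the injection $\Gal(K_n/\Q_p) \hookrightarrow \GL(2,\Z/n\Z)$ coming from the action on $E[n]$. For the bounds in the potentially good reduction case with $p\nmid n$, I would invoke Kraus's analysis via the Kodaira--N\'eron classification (cf.\ Silverman, \emph{Advanced Topics in the Arithmetic of Elliptic Curves}, Ch.~IV): the minimal totally ramified extension of $\Q_p^{\mathrm{nr}}$ over which $E$ acquires good reduction has degree dividing $24$ for $p=2$, $12$ for $p=3$, and the stated divisor for $p\ge 5$. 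For the potentially multiplicative reduction case, I would use the Tate uniformization $E\cong E_q$ up to a quadratic twist, with $v_p(q) = -v_p(j(E))$, and the explicit description of $E_q[n]$ as generated by $\zeta_n$ and $q^{1/n}$, to compute the ramification of $K_n$ inside the Kummer extension $\Q_p(\mu_{2n}, q^{1/n})$ directly.

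For (iii), the approach is to combine Serre's criterion for semistable reduction --- inertia acts unipotently on $T_\ell E$ for some $\ell \neq p$ --- with Grothendieck's monodromy theorem: the hypothesis that $E[n]$ is rational over $K_n$ forces the inertia subgroup of $K_n$ to land in the level-$n$ congruence subgroup of $\GL(2,\Z_\ell)$, which for $n\ge 3$ is torsion-free, so the quasi-unipotent action must already be unipotent. Part (iv) then refines these analyses: in the good-reduction case with $p\nmid n$, N\'eron--Ogg--Shafarevich gives $e_p(n)=1$, and for $e_p(p)$ one separates the ordinary from the supersingular case, with $e_p(p)$ read off from the action on the $p$-torsion of the formal group of height $1$ or $2$; in the multiplicative case, $K_n = \Q_p(\mu_n, q^{1/n})$ up to a harmless twist, and a direct Kummer-theoretic calculation using $v_p(q) = -v_p(j(E))$ yields the stated formulas via $\gcd$. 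The main obstacle I anticipate is keeping the case analysis clean for $p=2$ and $p=3$ in part (ii) under potentially good reduction, where one needs the sharper Kodaira--N\'eron bounds rather than the crude $\#\GL(2,\Z/n\Z)$ bound to obtain the stated divisibilities, together with the bookkeeping for the quadratic twist in the Tate-curve argument.
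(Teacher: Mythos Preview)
Your proposal is correct, and for parts (i), (ii), and (iv) it tracks the paper's proof closely: the same Weil-pairing and compositum arguments in (i), the same $\mu_p\subseteq K_p$ and $\Gal\hookrightarrow\GL_2$ observations in (ii) together with the bound on the degree of the minimal extension of $\Q_p^{\mathrm{ur}}$ realising good reduction (the paper cites Serre's 1971 paper rather than Kraus/Silverman, but the content is the same), and in (iv) the paper simply cites an earlier proposition where your N\'eron--Ogg--Shafarevich and Tate-curve computations are carried out.

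The one genuine divergence is part (iii). You argue uniformly via Grothendieck's monodromy theorem: inertia of $K_n$ lands in the principal congruence subgroup of level $\ell^m$ in $\GL_2(\Z_\ell)$ for a suitable prime power $\ell^m\mid n$ with $\ell^m\ge 3$ and $\ell\neq p$, and torsion-freeness of that subgroup forces the quasi-unipotent action to be honestly unipotent. The paper instead reduces to the two atomic cases $n=p$ an odd prime (citing \cite{IUTchIV}, Proposition~1.8(v)) and $n=4$, and for $n=4$ gives a direct contradiction argument (attributed to Wuthrich) using the Kodaira classification of component groups and the fact that both the additive identity component and the formal group over $\mathcal{O}_{K_4}$ are pro-$p$ with $p$ odd, so cannot contain a point of order $2$. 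Your route is cleaner and more conceptual; the paper's is more elementary and self-contained for the delicate case $n=4$, avoiding any appeal to monodromy.
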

\begin{proof}
 For finite extension of $p$-adic local fields $L/M$, we shall write $e(L/M)$ for the ramification index of $L$ over $M$.

For assertion (i), the results related to reduction type follows from the arithmetic of elliptic curves, cf. \cite{Silverman2009EllipticCurves}, Proposition 5.5 for a reference. 
Since $\gcd(m,n) = 1$, we have $K_{mn} = \Q_p(E[mn]) = \Q_p(E[m], E[n]) = K_m\cdot K_n$, then the results related to ramification indices follows from algebraic number theory.

For assertion (ii), since $\mu_p \subseteq K_p$, we have $p-1 = e(\Q_p(\mu_p)/\Q_p) \mid e_p(p)$; since $\Gal(K_n/\Q_p)$ is isomorphic to a subgroup of $\GL(2,\Z/n\Z)$, we have $e_p(p) \mid [K_n/\Q_p] \mid \#\GL(2,\Z/n\Z)$. 
Now let $n$ be a positive integer such that $p\nmid n$, and suppose that $E$ has potentially good reduction.
Then by \cite{Serre1971PropritsGD}, 5.6, there exists a finite extension $L$ of the maximal unramified field extension $\Q_p^{\ur}$ of $\Q_p$, such that $E$ has good reduction over $L$. Moreover, we have $[L:\Q_p^{\ur}] \mid 24$ if $p=2$, $[L:\Q_p^{\ur}] \mid 12$ if $p=3$, and $[L:\Q_p^{\ur}] \mid 6$ if $p\ge 5$.
Then since $p\nmid n$, we have $e(L(E[n]) / L) = 1$. Then $e_p(n) = e(K_n/\Q_p) = e(\Q_p^{\ur}(E[n]) / \Q_p^{\ur} ) \mid e(L(E[n])/ \Q_p^{\ur}) = e(L/\Q_p^{\ur}) \mid [L:\Q_p^{\ur}]$, hence we have $e_p(n) \mid 24$ if $p=2$, etc. The case that $E$ has potentially multiplicative reduction can be proved similarly.

For assertion (iii), we only need to prove for the case when $n = p \ge 3$ is an odd prime number and for $n=4$. For the case $n=p$, cf. \cite{IUTchIV}, Proposition 1.8 $(v)$.
For the case $n=4$, as is explained by Chris Wuthrich, we can prove by contradiction. 
Suppose that there exist $E$ which has additive reduction over $K_n = \Q_p(E[4])$, let $\mathcal{E}$ be the N\'eron model of $E$ over $\mathcal O_{K_n}$. 
Then sicne $p\nmid n=4$, we have $p\ge 3$.
By the Kodaira classification, the group of components of the Néron model $\mathcal{E}/\mathcal{O}_{K_n}$ cannot contain $\mathbb{Z}/4\mathbb{Z}\times \mathbb{Z}/4\mathbb{Z}$. Therefore, we would have a point of order $2$ in the identity component $\mathcal{E}^0(\mathcal{O}_{K_n})$. The reduction is additive, so the group of non-singular points is a $p$-group, which means that our $2$-torsion  point must belong to the kernel of reduction, i.e.,  the formal group. However the torsion subgroup of a formal group over $\mathcal{O}_{K_n}$ is also a $p$-group --- a contradiction! This proves assertion (iii).

assertion (iv) is a reformulation of \cite{Diophantine_after_IUT_I}, Proposition 1.8, (iii) and (iv).
\end{proof}

For the convenience of description, we shall introduce some definitions.

\begin{definition}  \label{1-2-def}
Let $E$ be an elliptic curve defined over $\Q$, and let $F$ be a finite Galois field extension of $\Q$.
Let $l\ge 5$ be a prime number, such that $l\nmid [F:\Q]$.
For each prime number $p$, choose a place $\uv$ of $K\defeq F(E[l])$ dividing $p$, and write $e_p(E,F,l)$ for the ramification index of $K_{\uv}$ over $\Q_p$. 
\end{definition}
\begin{tiny-remark}
Since $\Q(E[l])$ is Galois over $\Q$ [cf. \cite{TorsionPointFields}, Proposition 5.2.1 and Proposition 5.2.2] and $F$ is Galois over $\Q$, we can see that $K$ is Galois over $\Q$. Hence $e_p(E,F,l)$ is independent to the choice of $\uv$, i.e.  $e_p(E,F,l)$ is well-defined.
\end{tiny-remark}

\begin{cor} \label{1-2-cor0}
Let $E$ be an elliptic curve defined over $\Q$, such that $E$ has semi-stable reduction at each $p\neq 2,3$.
Let $N$ be the denominator of the $j$-invariant of $E$.
Let $l\ge 5$ be a prime number, and we shall follow the notation of Definition \ref{1-2-def}.

Write $F\defeq \Q(E[12])$, $E_F \defeq E\times_{\Q} F$, and let $w$ be a place of $F$ with residue characteristic $p$. 
Then if $p \nmid N$, $E_F$ has good reduction at $w$; if $p \mid N$, $E_F$ has multiplicative reduction at $w$; we have $\sqrt{-1} \in F$, $2\mid e_2(E,F,l)$, $2\mid e_3(E,F,l)$ and $(l-1) \mid e_l(E,F,l)$.

In the case of $p \nmid N$, we have 
$e_2(E,F,l) \mid 2^8\cdot 3^2$ for $p=2$, $e_3(E,F,l) \mid 2^6\cdot 3^2$ for $p=3$, 
$e_l(E,F,l) \in \{l-1, l(l-1), l^2-1\}$ for $p=l$, and $e_p(E,F,l) = 1$ for $p\neq 2,3,l$.
 
In the case of $p \mid N$, we have
$e_2(E,F,l) \mid 24l$ for $p=2$, $e_3(E,F,l) \mid 48l$ for $p=3$, 
$e_l(E,F,l) \mid 12 l (l-1)$ for $p=l$, 
and $e_p(E,F,l) \mid 12l$ for $p\neq 2,3,l$.
\end{cor}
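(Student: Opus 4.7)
The first step is to identify $K := F(E[l]) = \Q(E[12l])$, so that after fixing a place $\uv$ of $K$ above a rational prime $p$, the completion $K_{\uv}$ has the same ramification index over $\Q_p$ as $\Q_p(E[12l])$ (in the sense of Lemma \ref{1-2-lem: basic properties}). The Weil pairing gives $\mu_{12l}\subseteq K$; in particular $\mu_{12}\subseteq F$, so $\sqrt{-1}=\zeta_4\in F$. For the local divisibilities, $\mu_4\subseteq K$ yields $2=e(\Q_2(\mu_4)/\Q_2)\mid e_2(E,F,l)$; $\mu_3\subseteq F\subseteq K$ yields $2=e(\Q_3(\mu_3)/\Q_3)\mid e_3(E,F,l)$ (using that $\Q_3(\mu_3)/\Q_3$ is totally ramified of degree $2$); and $\mu_l\subseteq K$ yields $l-1=e(\Q_l(\mu_l)/\Q_l)\mid e_l(E,F,l)$.

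Next I would settle the reduction type of $E_F$. By Lemma \ref{1-2-lem: basic properties}(i), $E$ has potentially good reduction at $p$ precisely when $v_p(j(E))\ge 0$, i.e.\ $p\nmid N$, and potentially multiplicative reduction precisely when $p\mid N$. For $p\ne 2,3$, $E$ is already semi-stable at $p$ by hypothesis. For $p=2$ we have $\Q_2(E[3])\subseteq F_w$, and Lemma \ref{1-2-lem: basic properties}(iii) (with $n=3$) makes $E_{F_w}$ semi-stable; analogously for $p=3$ using $n=4$. Semi-stable plus potentially good (resp.\ potentially multiplicative) is good (resp.\ multiplicative), which proves the first claim of the corollary.

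For the good reduction bounds ($p\nmid N$), I would use the multiplicativity $e_p(mn)\mid e_p(m)\cdot e_p(n)$ of Lemma \ref{1-2-lem: basic properties}(i) together with (ii) and (iv). Decomposing $12l=4\cdot 3l$ for $p=2$: Lemma \ref{1-2-lem: basic properties}(ii) gives $e_2(3l)\mid 24$ (since $2\nmid 3l$) and $e_2(4)\mid \#\GL(2,\Z/4\Z)=96=2^5\cdot 3$, so $e_2(12l)\mid 2^8\cdot 3^2$. Decomposing $12l=3\cdot 4l$ for $p=3$: $e_3(4l)\mid 12$ and $e_3(3)\mid \#\GL(2,\F_3)=48=2^4\cdot 3$, so $e_3(12l)\mid 2^6\cdot 3^2$. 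For $p=l$, good reduction of $E$ at $l$ together with Lemma \ref{1-2-lem: basic properties}(iv) gives $e_l(12)=1$ and $e_l(l)\in\{l-1,l(l-1),l^2-1\}$; combined with $e_l(l)\mid e_l(12l)\mid e_l(12)\cdot e_l(l)$ this forces $e_l(12l)=e_l(l)\in\{l-1,l(l-1),l^2-1\}$. For $p\ne 2,3,l$, Lemma \ref{1-2-lem: basic properties}(iv) gives $e_p(12l)=1$ directly, as $p\nmid 12l$.

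For the multiplicative case ($p\mid N$), Lemma \ref{1-2-lem: basic properties}(ii) provides direct bounds. For $p=2$, $e_2(4\cdot 3l)\mid 8\cdot 3l=24l$. For $p=3$, $e_3(3\cdot 4l)\mid 2(3-1)\cdot 4l=16l$, and $16l\mid 48l$ yields the stated $48l$. For $p=l$, the hypothesis forces $E$ to be (honestly) multiplicative at $l$, so by Lemma \ref{1-2-lem: basic properties}(iv) $e_l(12)\mid 12$ and $e_l(l)\in\{l-1,l(l-1)\}$, giving $e_l(12l)\mid 12\cdot l(l-1)$. For $p\ne 2,3,l$, $E$ is multiplicative at $p$, $p\nmid 12l$, and Lemma \ref{1-2-lem: basic properties}(iv) yields $e_p(12l)\mid 12l$. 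The only real subtlety is the case-splitting: essentially all the work is done by Lemma \ref{1-2-lem: basic properties}, and the main obstacle is simply bookkeeping — choosing the right coprime decomposition of $12l$ at each prime $p$ so that the multiplicativity and the tabulated bounds combine to the numerical conclusions in the statement.
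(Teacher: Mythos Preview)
Your proposal is correct and follows essentially the same approach as the paper: identify $K=\Q(E[12l])$, use the Weil pairing to obtain $\mu_{12l}\subseteq K$ (hence $\sqrt{-1}\in F$ and the divisibilities $(p-1)\mid e_p$), invoke Lemma~\ref{1-2-lem: basic properties}(iii) to get semi-stability of $E_F$, and then bound each $e_p(12l)$ by choosing the same coprime decompositions $12l=4\cdot 3l$ (at $p=2$) and $12l=3\cdot 4l$ (at $p=3$) together with Lemma~\ref{1-2-lem: basic properties}(ii),(iv). Your computation at $p=3$ in the multiplicative case actually yields the sharper bound $16l$ (the paper records $48l$), but this is harmless since $16l\mid 48l$.
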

\begin{proof}
We shall follow the notation of Definition \ref{1-2-def} and prove via Lemma \ref{1-2-lem: basic properties}.
Since $12$-torsion points of $E_F$ are defined over $F$, we can see that $E_F$ has good [if $p\nmid N$] or multiplicative [if $p\mid N$] reduction at $w$, $\sqrt{-1}\in \mu_4 \subseteq \Q(E[4])$, and for $p\in\{2,3,l\}$, we have $(p-1) \mid e_p(E,F,l)$, cf.  Lemma \ref{1-2-lem: basic properties}.
Since $\Q(\sqrt{-1})$ only ramifies at $2$, by Lemma \ref{1-2-lem: basic properties}, we have the following proof:

In the case of $p\nmid N$, 
for $p=2$, we have $e_2(E,F,l) \mid e_2(4) \cdot e_2(3l)$, which divides $\#\GL(2,\Z/4\Z) \cdot 24 = 2^8 \cdot 3^2$;
for $p=3$, we have  $e_3(E,F,l) \mid e_3(3) \cdot e_3(4l)$, which divides $\#\GL(2,\Z/3\Z) \cdot 12 = 2^6 \cdot 3^2$;
for $p=l$, we have $e_l(E,F,l) = e_l(12l) = e_l(l) \in \{ l-1, l(l-1), l^2-1\}$;
and for $p\neq 2,3,l$, we have $e_p(E,F,l) = e_p(12l) = 1$.

In the case of $p\mid N$, 
for $p=2$, we have $e_2(E,F,l) = e_2(4\cdot 3l) \mid 8 \cdot 3l = 24l$;
for $p=3$, we have $e_3(E,F,l) = e_3(3\cdot 4l) \mid 2\cdot (3-1)\cdot 3 \cdot 4l = 48l$;
for $p=l$, since $E$ has multiplicative reduction at $l$, we have $e_l(E,F,l) = e_l(12l) \mid 12 l (l-1)$;
and for $p\neq 2,3,l$, since $E$ has multiplicative reduction at $p$, we have $e_p(E,F,l) = e_p(12l) \mid 12l$.
\end{proof}

\begin{cor} \label{1-2-cor1}
Let $(a,b,c)$ be a triple of non-zero coprime integers such that $a+b+c = 0$, $4\mid (a+1)$ and $16\mid b$.
Let $E$ be the elliptic curve defined over $\Q$ by the equation 
$$Y^2 + XY = X^3 + \frac{b-a-1}{4}\cdot X^2 - \frac{ab}{16}\cdot X .$$
Let $l\ge 5$ be a prime number, and we shall follow the notation of Definition \ref{1-2-def}.

(i) $E$ has semi-stable reduction [i.e. good or multiplicative reduction] at each $p$;
$\Q(E[2]) = \Q$; the $j$-invariant of $E$ is $j(E) = \frac{(a^2+ab+b^2)^3}{2^{-8}a^2b^2c^2}$,
 whose denominator $N$ equals $2^{-8}a^2b^2c^2$.

(ii) Write $F\defeq \Q(\sqrt{-1}, E[3])$, $E_F \defeq E\times_{\Q} F$, and let $w$ be a place of $F$ with residue characteristic $p$. 
Then if $p \nmid N$, $E_F$ has good reduction at $w$; if $p \mid N$, $E_F$ has multiplicative reduction at $w$; we have $2\mid e_2(E,F,l)$, $2\mid e_3(E,F,l)$ and $(l-1) \mid e_l(E,F,l)$.

In the case of $p \nmid N$, we have 
$e_2(E,F,l) = 2$ for $p=2$, $e_3(E,F,l) \in \{2,6,8\}$ for $p=3$, $e_l(E,F,l) \in \{l-1, l(l-1), l^2-1\}$ for $p=l$, and $e_p(E,F,l) = 1$ for $p\neq 2,3,l$.

In the case of $p \mid N$, we have
$e_2(E,F,l) \mid 6l$ for $p=2$, $e_3(E,F,l) \mid 6l$ for $p=3$, $e_l(E,F,l) \mid 3l(l-1)$ for $p=l$, and $e_p(E,F,l) \mid 3l$ for $p\neq 2,3,l$.

(iii) Write $F\defeq \Q(\sqrt{-1})$, $E_F \defeq E\times_{\Q} F$, and let $w$ be a place of $F$ with residue characteristic $p$. 
Then if $p \nmid N$, $E_F$ has good reduction at $w$; if $p \mid N$, $E_F$ has multiplicative reduction at $w$; we have $2\mid e_2(E,F,l)$ and $(l-1) \mid e_l(E,F,l)$.

In the case of $p \nmid N$,  we have 
$e_2(E,F,l) = 2$ for $p=2$, $e_l(E,F,l) \in \{l-1, l(l-1), l^2-1\}$ for $p=l$, and $e_p(E,F,l) = 1$ for $p\neq 2, l$.

In the case of $p \mid N$, we have
$e_2(E,F,l) \mid 2l$ for $p=2$, 
$e_l(E,F,l) \mid l (l-1)$ for $p=l$, 
and $e_p(E,F,l) \mid l$ for $p\neq 2, l$.
\end{cor}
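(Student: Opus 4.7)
The plan is to handle (i) by explicit Weierstrass invariants and to reduce (ii)--(iii) to Lemma \ref{1-2-lem: basic properties} via compositum formulas for ramification indices.

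For (i), I would first bring the equation to short Weierstrass form by completing the square in $Y$; the cubic in $X$ then factors as $X(X - a/4)(X + b/4)$, simultaneously revealing that all three nontrivial $2$-torsion points are $\Q$-rational and permitting a direct computation of the Tate invariants $c_4 = a^2 + ab + b^2$ and $\Delta = (abc)^2/2^8$. The hypothesis $16 \mid b$ turns $\Delta$ into the integer $a^2 (b/16)^2 c^2$. Using the coprimality of $a, b, c$ together with the parities ($a$ odd from $4 \mid a+1$, $b$ even from $16 \mid b$, hence $c$ odd), a quick check shows that $c_4$ is a $p$-adic unit at every prime $p$ dividing $abc$: if $p \mid a$ (resp.\ $b$) then $c_4 \equiv b^2$ (resp.\ $a^2$) $\pmod p$, if $p \mid c$ then $a \equiv -b \pmod p$ and $c_4 \equiv b^2 \pmod p$, and at $p=2$ we have $c_4 \equiv a^2 \equiv 1 \pmod 2$. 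Hence $\gcd(c_4, \Delta) = 1$, the given Weierstrass model is globally minimal, and semistability everywhere as well as the claimed formulas for $j(E)$ and its denominator $N = 2^{-8} a^2 b^2 c^2$ follow at once.

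For parts (ii) and (iii), the compositum $K := F(E[l])$ equals $\Q(\sqrt{-1}, E[3l])$ in case (ii) and $\Q(\sqrt{-1}, E[l])$ in case (iii), and $e_p(E,F,l)$ is by definition the ramification index of $K$ over $\Q_p$ at a place above $p$. I would split on whether $p \mid N$ or $p \nmid N$, which by (i) corresponds to $E/\Q_p$ having multiplicative or good reduction, and then apply Lemma \ref{1-2-lem: basic properties} factor by factor to the extensions $\Q_p(\sqrt{-1})$, $\Q_p(E[3])$, $\Q_p(E[l])$. In the good-reduction cases the only ramification comes from $\Q_2(\sqrt{-1})/\Q_2$ (contributing $e_2 = 2$), from $\Q_3(E[3])/\Q_3$ (contributing $e_3 \in \{2,6,8\}$ by part (iv) of the lemma), and from $\Q_l(E[l])/\Q_l$ (contributing $e_l \in \{l-1, l(l-1), l^2-1\}$); at every other prime everything is unramified. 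In the multiplicative cases the sharper bound $e_p(n) \mid n$ of Lemma \ref{1-2-lem: basic properties} (iv) --- sharper than the potentially multiplicative $e_p(n) \mid 2n$ of part (ii), and available precisely because $E/\Q_p$ really is multiplicative --- combines with the standard compositum inequality $e_p(L_1 L_2/\Q_p) \mid e_p(L_1/\Q_p) \cdot e_p(L_2/\Q_p)$ to produce all the stated divisibilities. Finally the lower bounds $2 \mid e_2$, $2 \mid e_3$ (case (ii)), and $(l-1) \mid e_l$ follow from the inclusions $\Q(\sqrt{-1}) = \Q(\mu_4) \subseteq K$, $\Q(\mu_3) \subseteq \Q(E[3]) \subseteq K$, and $\Q(\mu_l) \subseteq \Q(E[l]) \subseteq K$, since $\Q_2(\mu_4)/\Q_2$, $\Q_3(\mu_3)/\Q_3$, and $\Q_l(\mu_l)/\Q_l$ are totally ramified of degrees $2$, $2$, and $l-1$ respectively.

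The main potential pitfall is (i): verifying that the given (not \emph{a priori} minimal) Weierstrass model is globally minimal, so that reduction type can be read off from divisibility of $\Delta$ alone. The hypotheses $4 \mid a+1$ and $16 \mid b$ are exactly calibrated for this — they force $a$ to be odd and $b$ to contribute a factor $2^8$ to $\Delta$, cancelling the naive $2^{-8}$ and ensuring $v_2(c_4) = 0$, which rules out any non-minimality at $2$. Once (i) is in hand, parts (ii)--(iii) are largely mechanical: one simply tracks which of $\Q(\sqrt{-1})$, $\Q(E[3])$, $\Q(E[l])$ contributes ramification at each $p$, and invokes the appropriate case of Lemma \ref{1-2-lem: basic properties}.
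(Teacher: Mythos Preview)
Your proposal is correct and follows essentially the same approach as the paper: compute $c_4$ and $\Delta$ explicitly, verify $\gcd(c_4,\Delta)=1$ to get global minimality and semistability, read off rational $2$-torsion from the factored cubic, and then for (ii)--(iii) decompose $K$ as a compositum and invoke Lemma~\ref{1-2-lem: basic properties} case by case. The paper's write-up is terser (it just cites ``Tate's algorithm'' and the alternate model $Y^2=X(X-a)(X+b)$, and leaves most of the ramification bookkeeping implicit), but the underlying argument is the same as yours.
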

\begin{proof}
For assertion (i), by using Tate's algorithm, we have 
\small\begin{align*}
c_4(E) = a^2 + ab + b^2, \quad
\Delta(E) = 2^{-8}a^2b^2c^2, \quad 
j(E) = \frac{(a^2+ab+b^2)^3}{2^{-8}a^2b^2c^2}.
\end{align*}\normalsize
Since $\gcd(a,b,c) = 1$ and $16\mid b$, we have $\gcd(c_4(E), \Delta) = 1$, hence $E$ has semi-stable reduction at each $p$, and $N=2^{-8}a^2b^2c^2$. 
Since $E$ can also be defined by the equation $Y^2 = X(X-a)(X+b)$, we can see that the $2$-torsion  points of $E$ are defined over $\Q$, hecne $\Q(E[2]) = \Q$.

For assertion (ii), we shall follow the notation of Definition \ref{1-2-def} and prove via Lemma \ref{1-2-lem: basic properties}.
Since $E$ has semi-stable reduction at $p$, we can see that $E_F$ has good [if $p\nmid N$] or multiplicative [if $p\mid N$] reduction at $w$, and for $p\in\{2,3,l\}$, we have $(p-1) \mid e_p(E,F,l)$, cf.  Lemma \ref{1-2-lem: basic properties}.

In the case of $p\mid N$, since $\Q(\sqrt{-1}) / \Q$ only ramifies at 2, by Lemma \ref{1-2-lem: basic properties}, 
we have $e_2(E,F,l) \mid 2 e_2(3l) \mid 6l$, 
$e_3(E,F,l) = e_3(3l) = e_3(l) \in \{2,6,8\}$, $e_l(E,F,l) = e_l(3l) = e_l(l) \in \{l-1, l(l-1), l^2-1\}$, and for $p\neq 2,3,l$, $e_p(E,F,l) = e_p(3l) = 1$.

Other cases in assertions (ii) and (iii) can be proved using a similar approach.
\end{proof}

\begin{cor} \label{1-2-cor2}
Let $(a,b,c)$ be a triple of non-zero coprime integers such that $a^2 + b^3 = c$.
Let $E$ be the elliptic curve defined over $\Q$ by the equation 
$$Y^2 = X^3 + 3bX + 2a,$$
Let $l\ge 5$ be a prime number, and we shall follow the notation of Definition \ref{1-2-def}.

(i) $E$ has semi-stable reduction at each $p\neq 2,3$;
the $j$-invariant of $E$ is $j(E) = \frac{1728b^3}{c}$,
whose denominator $N$ equals $|c| / \gcd(1728, c)$.

(ii) Suppose that $q\ge 5$ is a prime number, such that $q\neq l$ and $|c|^{1/q} \in \Z$.
Write $F \defeq \Q(\sqrt{-1}, E[2q])$,  $E_F \defeq E\times_{\Q} F$, and let $w$ be a place of $F$ with residue characteristic $p$. 
Then if $p \nmid N$, $E_F$ has good reduction at $w$; if $p \mid N$, $E_F$ has multiplicative reduction at $w$; we have $2\mid e_2(E,F,l)$, $(q-1)\mid e_q(E,F,l)$ and $(l-1) \mid e_l(E,F,l)$.

In the case of $p \nmid N$, we have 
$e_2(E,F,l) \mid 2^5\cdot 3^2$ for $p=2$, $e_q(E,F,l) \in \{q-1, q(q-1), q^2-1 \}$ for $p=q$, $e_l(E,F,l) \in \{l-1, l(l-1), l^2-1\}$ for $p=l$, and $e_p(E,F,l) = 1$ when $p\neq 2, q, l$.

In the case of $p \mid N$, we have
$e_2(E,F,l) \mid 8ql$ for $p=2$, $e_q(E,F,l) \mid 2ql(q-1)$ for $p=q$, $e_l(E,F,l) \mid 2 l (l-1)$ for $p=l$, and $e_p(E,F,l) \mid 2ql$ for $p\neq 2, q, l$.
\end{cor}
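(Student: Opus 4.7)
The proof will follow closely the template of Corollary \ref{1-2-cor1}. Treating the two assertions in turn, everything reduces to Lemma \ref{1-2-lem: basic properties}.

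For assertion (i), the plan is to compute the standard invariants directly from the Weierstrass model $Y^2 = X^3 + 3bX + 2a$: with $A = 3b$ and $B = 2a$, one obtains
\[
c_4(E) = -48A = -144 b, \qquad \Delta(E) = -16(4A^3 + 27B^2) = -1728 c,
\]
whence $j(E) = c_4^3/\Delta = 1728 b^3/c$. Next I would observe that $\gcd(a,b,c) = 1$ combined with $a^2 + b^3 = c$ forces $a$, $b$, $c$ to be pairwise coprime, since any common prime factor of two of them would, via the relation, divide the third, contradicting $\gcd(a,b,c) = 1$. In particular $\gcd(b,c) = 1$, so any common divisor of $c_4$ and $\Delta$ is supported on $\{2, 3\}$; Tate's algorithm then yields semi-stable reduction at every $p \neq 2, 3$, and identifies $N = |c|/\gcd(1728, c)$.

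For assertion (ii), the first step is to upgrade to semi-stable reduction after base change to $F$ at every place. For $p \neq 2, 3$ this is assertion (i); for $p \in \{2, 3\}$ I would invoke Lemma \ref{1-2-lem: basic properties} (iii) with $n = q$ (if $p = 2$) or $n = 2q$ (if $p = 3$), each of which satisfies $n \ge 3$ and $p \nmid n$, to conclude that $E$ is already semi-stable over $\mathbb{Q}_p(E[n]) \subseteq F_w$. The good-versus-multiplicative dichotomy over $F_w$ then matches $p \nmid N$ versus $p \mid N$ via the sign of $v_p(j)$. The lower divisibilities $2 \mid e_2$, $(q-1) \mid e_q$, $(l-1) \mid e_l$ follow from the inclusions $\mathbb{Q}(\sqrt{-1}) \subseteq K$, $\mathbb{Q}(\mu_q) \subseteq \mathbb{Q}(E[q]) \subseteq K$, and $\mathbb{Q}(\mu_l) \subseteq \mathbb{Q}(E[l]) \subseteq K$ together with multiplicativity of ramification indices in towers of Galois extensions.

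For the upper bounds I would write $K = \mathbb{Q}(\sqrt{-1}) \cdot \mathbb{Q}(E[2ql])$ and apply Lemma \ref{1-2-lem: basic properties} (i) twice to reduce the computation of $e_p(K/\mathbb{Q}_p)$ to a product of $e(\mathbb{Q}(\sqrt{-1})/\mathbb{Q}_p)$ with the individual ramification indices of $\mathbb{Q}(E[2])$, $\mathbb{Q}(E[q])$ and $\mathbb{Q}(E[l])$ over $\mathbb{Q}_p$. Each factor is then bounded via Lemma \ref{1-2-lem: basic properties} (ii) or (iv), according as $E/\mathbb{Q}_p$ has potentially good or potentially multiplicative reduction, and the eight displayed divisibilities are assembled by a short case distinction on $p \in \{2, q, l\}$ or $p \notin \{2, 3, q, l\}$ combined with the good/multiplicative dichotomy. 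The main technical nuisance, as in Corollary \ref{1-2-cor1}, is the prime $p = 2$: there $\mathbb{Q}(\sqrt{-1})$ itself contributes a ramified factor of $2$, $E/\mathbb{Q}_2$ is only potentially semi-stable, and the factor of $2$ in $2ql$ is not coprime to $p$, so one must invoke the $e_2(4n) \mid 8n$ clause of Lemma \ref{1-2-lem: basic properties} (ii) rather than the cleaner coprime-$n$ clause. The hypothesis $|c|^{1/q} \in \mathbb{Z}$ is not needed for the ramification bounds themselves; it is imposed so that $q \mid v_p(j(E))$ holds at every prime $p \mid N$, a divisibility that subsequent constructions of $2$-torsion initial $\Theta$-data via Proposition \ref{1-prop: construction of mu_2 initial Theta-data} will invoke.
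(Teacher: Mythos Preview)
Your approach matches the paper's: compute $c_4$, $\Delta$, $j$ via Tate's algorithm for (i), then derive all ramification bounds from Lemma \ref{1-2-lem: basic properties} exactly as in Corollaries \ref{1-2-cor0} and \ref{1-2-cor1}. The computations for (i) and the outline of the case analysis for (ii) are correct.

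There is, however, one genuine gap. You assert that the hypothesis $|c|^{1/q}\in\Z$ is not needed for the ramification bounds themselves, but it is in fact essential for the bound $e_l(E,F,l)\mid 2l(l-1)$ in the case $l\mid N$. Indeed, $K=\Q(\sqrt{-1},E[2ql])$, and since $\Q(\sqrt{-1})$ is unramified at $l\ge 5$ one has $e_l(E,F,l)=e_l(2ql)$, which divides $e_l(2q)\cdot e_l(l)$ by Lemma \ref{1-2-lem: basic properties} (i). Here $e_l(l)\mid l(l-1)$, while Lemma \ref{1-2-lem: basic properties} (iv) gives $e_l(2q)=2q/\gcd(2q,v_l(j(E)))$. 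Now $v_l(j(E))=-v_l(c)$ (using $l\nmid 1728$ and $\gcd(b,c)=1$), and it is precisely the hypothesis $|c|^{1/q}\in\Z$ that forces $q\mid v_l(c)$, hence $q\mid\gcd(2q,v_l(j))$, hence $e_l(2q)\mid 2$. Without the hypothesis one only obtains $e_l(E,F,l)\mid 2ql(l-1)$, which is strictly weaker than the claimed bound. So the divisibility $q\mid v_p(j(E))$ that you correctly extract from the hypothesis, but attribute solely to later $\Theta$-data constructions, is already doing work in the ramification estimate at $p=l$.
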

\begin{proof}
For assertion (i), by using Tate's algorithm, we have 
\small\begin{align*}
c_4(E) = -144 b, \quad
\Delta(E) = -1728c, \quad 
j(E) = \frac{1728b^3}{c}.
\end{align*}\normalsize
Since $\gcd(a,b,c) = 1$, we have $\gcd(c_4(E), \Delta) = \gcd(1728, c)$, hence $E$ has semi-stable reduction at each $p\neq 2,3$, and $N = |c|/\gcd(1728, c)$.

For assertion (ii), the proof follows a similar approach to those of Corollary \ref{1-2-cor0} and Corollary \ref{1-2-cor1}.
\end{proof}

\begin{cor} \label{1-2-cor3}
Let $(a,b,c)$ be a triple of non-zero coprime integers such that $a + b = c^3$.
Let $E$ be the elliptic curve defined over $\Q$ by the equation 
$$Y^2 + 3cXY + aY= X^3,$$
Let $l\ge 5$ be a prime number, and we shall follow the notation of Definition \ref{1-2-def}.

(i) $E$ has semi-stable reduction at each $p\neq 3$;
the $j$-invariant of $E$ is $j(E) = \frac{27 c^3 (a+9b)^3}{a^3 b}$,
whose denominator $N$ equals $|a^3 b| / \gcd(27, a^3 b)$.

(ii) Write $F\defeq \Q(E[4])$, $E_F \defeq E\times_{\Q} F$, and let $w$ be a place of $F$ with residue characteristic $p$. 
Then if $p \nmid N$, $E_F$ has good reduction at $w$; if $p \mid N$, $E_F$ has multiplicative reduction at $w$; we have $\sqrt{-1} \in F$, $2\mid e_2(E,F,l)$ and $(l-1) \mid e_l(E,F,l)$.

In the case of $p \nmid N$, we have 
$e_2(E,F,l) \mid 96$ for $p=2$, $e_3(E,F,l) \mid 12$ for $p=3$, 
$e_l(E,F,l) \in \{l-1, l(l-1), l^2-1\}$ for $p=l$, and $e_p(E,F,l) = 1$ for $p\neq 2,3,l$.
 
In the case of $p \mid N$, we have
$e_2(E,F,l) \mid 8l$ for $p=2$, $e_3(E,F,l) \mid 8l$ for $p=3$, 
$e_l(E,F,l) \mid 4 l (l-1)$ for $p=l$, 
and $e_p(E,F,l) \mid 4 l$ for $p\neq 2,3,l$.
\end{cor}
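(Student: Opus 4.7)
The proof follows exactly the same template as Corollaries \ref{1-2-cor0}, \ref{1-2-cor1}, and \ref{1-2-cor2}: first apply Tate's algorithm to read off $c_4$, $\Delta$, and $j$, then exploit the pairwise coprimality of $a$, $b$, $c$ (which is inherited from $\gcd(a,b,c)=1$ together with $a+b=c^3$) to establish semi-stability outside $\{3\}$ and identify the denominator of $j$, and finally combine Lemma \ref{1-2-lem: basic properties} with the chosen field $F=\Q(E[4])$ to bound each ramification index $e_p(E,F,l)$.

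For assertion (i), I would feed the Weierstrass data $(a_1,a_2,a_3,a_4,a_6)=(3c,0,a,0,0)$ into the standard formulas, obtaining $b_2=9c^2$, $b_4=3ac$, $b_6=a^2$, $b_8=0$; after substituting $c^3=a+b$ in $c_4=b_2^2-24b_4$, one finds $c_4=9c(a+9b)$ and $\Delta=-8b_4^3-27b_6^2+9b_2b_4b_6=27a^3b$, so $j=27c^3(a+9b)^3/(a^3b)$. The relations $a+b=c^3$ and $\gcd(a,b,c)=1$ force $a,b,c$ to be pairwise coprime. For a prime $p\neq 3$ with $p\mid \Delta$, I would check $p\mid ab$, and then argue $p\nmid c_4$: if $p\mid a$ then $v_p(9b)=0$ forces $v_p(a+9b)=0$, while if $p\mid b$ then $v_p(a+9b)=v_p(a)=0$, and in both cases $p\nmid c$ by pairwise coprimality. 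Hence $E$ has multiplicative reduction at such $p$, i.e.\ semi-stable reduction at every $p\neq 3$. The stated shape of the denominator $N$ is then obtained by reading off $v_p(j)$ prime by prime using the same pairwise-coprimality analysis.

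For assertion (ii), I would set $F\defeq\Q(E[4])$ and $K\defeq F(E[l])=\Q(E[4l])$. The Weil pairing gives $\mu_4\subseteq \Q(E[4])=F$, so $\sqrt{-1}\in F$. Since $E$ is semi-stable at every $p\neq 3$, and since Lemma \ref{1-2-lem: basic properties} (iii) applied with $n=4$ (which is $\geq 3$ and coprime to $3$) shows $E_F$ acquires semi-stable reduction at primes above $3$, $E_F$ is semi-stable everywhere; good versus multiplicative reduction at $w$ is distinguished by whether $v_p(j(E))\geq 0$ or $<0$, i.e.\ by whether $p\nmid N$ or $p\mid N$. The numerical bounds then follow exactly the template of the proofs of Corollaries \ref{1-2-cor0} and \ref{1-2-cor1}: one splits cases by $p\in\{2,3,l\}$ versus $p\notin\{2,3,l\}$ and by reduction type, and applies Lemma \ref{1-2-lem: basic properties} (ii) (for the potentially-good / potentially-multiplicative bounds) and Lemma \ref{1-2-lem: basic properties} (iv) (for the sharper bounds available once $E$ is already semi-stable). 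For instance in the $p\nmid N$, $p=2$ case, I would use $e_2(4l)\mid e_2(4)\cdot e_2(l)$ with $e_2(4)\mid \#\GL(2,\Z/4\Z)=96$ and $e_2(l)=1$ (good reduction at $2$); in the $p\mid N$, $p=2$ case, the bound $e_2(4l)\mid 8l$ comes directly from Lemma \ref{1-2-lem: basic properties} (ii); the remaining combinations are handled analogously.

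The only genuine obstacle is bookkeeping: one must be careful to invoke the sharper Lemma \ref{1-2-lem: basic properties} (iv) bounds exactly at those $p$ where $E$ itself (rather than merely $E_F$) is already semi-stable — which here is every $p\neq 3$ — so that the factors $24$ or $12$ in the worst-case potentially-good bounds collapse to $1$, and likewise $2n$ collapses to $n/\gcd(n,v_p(j))$ in the multiplicative case with $p\nmid n$. Once this is pinned down for each residue characteristic, the claimed divisibilities follow mechanically from the multiplicativity $e_p(mn)\mid e_p(m)\cdot e_p(n)$ for coprime $m,n$ and from the factorization $4l=4\cdot l$.
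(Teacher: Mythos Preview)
Your proposal is correct and follows essentially the same approach as the paper: compute $c_4=9c(a+9b)$, $\Delta=27a^3b$, $j=27c^3(a+9b)^3/(a^3b)$ via Tate's algorithm, use the pairwise coprimality of $a,b,c$ to deduce semi-stability away from~$3$ and the shape of the denominator, and then for~(ii) invoke Lemma~\ref{1-2-lem: basic properties} exactly as in the proofs of Corollaries~\ref{1-2-cor0} and~\ref{1-2-cor1}. The paper's own proof is in fact terser than yours---it simply states the Tate invariants and then says ``the proof follows a similar approach to those of Corollary~\ref{1-2-cor0} and Corollary~\ref{1-2-cor1}''---so your expanded case analysis for the ramification bounds is a faithful unpacking of what the paper leaves implicit.
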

\begin{proof}
For assertion (i), by using Tate's algorithm, we have 
\small\begin{align*}
c_4(E) = 9 c (a+9b), \quad
\Delta(E) = 27 a^3 b, \quad 
j(E) = \frac{27 c^3 (a+9b)^3}{a^3 b}.
\end{align*}\normalsize
Since $\gcd(a,b,c) = 1$, we have $\gcd(c_4(E), \Delta) = \gcd(27, a^3 b)$, hence $E$ has semi-stable reduction at each $p\neq 3$, and $N = |a^3 b| / \gcd(27, a^3 b)$.

For assertion (ii), the proof follows a similar approach to those of Corollary \ref{1-2-cor0} and Corollary \ref{1-2-cor1}.
\end{proof}

\begin{cor} \label{1-2-cor4}
Let $(a,b,c)$ be a triple of non-zero coprime integers such that $a + b = c^2$.
Let $E$ be the elliptic curve defined over $\Q$ by the equation 
$$Y^2 = X^3 + 2cX^2 + aX,$$
Let $l\ge 5$ be a prime number, and we shall follow the notation of Definition \ref{1-2-def}.

(i) $E$ has semi-stable reduction at each $p\neq 2$;
the $j$-invariant of $E$ is $ j(E) = \frac{64 (a+4b)^3}{a^2 b}$,
whose denominator $N$ equals $|a^2 b| / \gcd(64, a^2 b)$.

(ii) Write $F\defeq \Q(\sqrt{-1}, E[6])$, $E_F \defeq E\times_{\Q} F$, and let $w$ be a place of $F$ with residue characteristic $p$. 
Then if $p \nmid N$, $E_F$ has good reduction at $w$; if $p \mid N$, $E_F$ has multiplicative reduction at $w$; we have $\sqrt{-1} \in F$, $2\mid e_2(E,F,l)$, $2\mid e_3(E,F,l)$ and $(l-1) \mid e_l(E,F,l)$.

In the case of $p \nmid N$, we have 
$e_2(E,F,l) \mid 2^5 \cdot 3^2$ for $p=2$, $e_3(E,F,l) \mid 2^6 \cdot 3^2$ for $p=3$, 
$e_l(E,F,l) \in \{l-1, l(l-1), l^2-1\}$ for $p=l$, and $e_p(E,F,l) = 1$ for $p\neq 2,3,l$.
 
In the case of $p \mid N$, we have
$e_2(E,F,l) \mid 24 l$ for $p=2$, $e_3(E,F,l) \mid 12 l$ for $p=3$, 
$e_l(E,F,l) \mid 6 l (l-1)$ for $p=l$, 
and $e_p(E,F,l) \mid 6 l$ for $p\neq 2,3,l$.
\end{cor}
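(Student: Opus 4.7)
The plan is to follow the template of the proofs of Corollaries \ref{1-2-cor0}--\ref{1-2-cor3}, which proceed in two stages: an application of Tate's algorithm for assertion (i), and a compositum-plus-Lemma \ref{1-2-lem: basic properties} analysis for assertion (ii).

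For assertion (i), I would apply Tate's algorithm to $Y^2 = X^3 + 2cX^2 + aX$. Reading off $a_2 = 2c$, $a_4 = a$, $a_1 = a_3 = a_6 = 0$, a routine computation produces $b_2 = 8c$, $b_4 = 2a$, $b_6 = 0$, $b_8 = -a^2$, and then
\begin{equation*}
c_4 \;=\; 16(4c^2 - 3a) \;=\; 16(a + 4b), \qquad \Delta \;=\; 64\,a^2(c^2 - a) \;=\; 64\,a^2 b
\end{equation*}
after substituting $c^2 = a + b$; consequently $j(E) = c_4^3/\Delta = 64(a+4b)^3/(a^2 b)$. Semi-stability at $p \neq 2$ reduces to checking that $\gcd(c_4, \Delta)$ is supported only at $2$: at an odd $p \mid 64\,a^2 b$, either $p \mid a$ (so $a + 4b \equiv 4b \not\equiv 0 \pmod{p}$) or $p \mid b$ (so $a + 4b \equiv a \not\equiv 0 \pmod{p}$), by the coprimality of $(a,b,c)$ together with $p \neq 2$. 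Tracking $2$-adic valuations in $j = 64(a+4b)^3/(a^2 b)$ (using $\gcd(a+4b,\,ab) = 1$ at odd primes) then yields $N = |a^2 b|/\gcd(64,\,a^2 b)$.

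For assertion (ii), the key observation is that $K = F(E[l]) = \Q(\sqrt{-1}, E[6l])$ since $\gcd(6, l) = 1$; and because $\sqrt{-1} \in \Q(E[4])$ via the Weil pairing, in fact $K \subseteq \Q(E[12l])$, which reduces the whole analysis to a single cyclotomic-style extension. With $E[6] \subseteq F$ and with $E/\Q$ semi-stable away from $2$ by part (i), together with Lemma \ref{1-2-lem: basic properties}(iii) applied through $E[3] \subseteq F$ to handle $p = 2$, one concludes that $E_F$ has good (resp.\ multiplicative) reduction at $w \nmid N$ (resp.\ $w \mid N$). The assertion $\sqrt{-1} \in F$ is automatic, and $2 \mid e_2(E,F,l)$, $2 \mid e_3(E,F,l)$, $(l-1) \mid e_l(E,F,l)$ follow from Lemma \ref{1-2-lem: basic properties}(i), (ii) exactly as in the opening of the proof of Corollary \ref{1-2-cor0}. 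For the explicit divisibilities, I feed $K \subseteq \Q(E[12l])$ into Lemma \ref{1-2-lem: basic properties}(ii), (iv): in the potentially good case the $E[n]$-piece for $p \nmid n$ is bounded by $24, 12, 6$ at $p = 2, 3, \geq 5$, while the $p$-primary piece is handled by part (iv); in the potentially multiplicative case one uses $e_p(n) \mid 2n$, $e_p(pn) \mid 2(p-1)n$, and $e_2(4n) \mid 8n$ with $n = 3l$, $n = 4l$, $n = 12$, $n = 12l/p$ as appropriate. Multiplying these out reproduces $2^5 \cdot 3^2$, $2^6 \cdot 3^2$, $\{l-1, l(l-1), l^2-1\}$, $1$ in the good case and $24l, 12l, 6l(l-1), 6l$ in the multiplicative case.

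The main obstacle I anticipate is the bookkeeping at $p = 2$, where three sources contribute to the ramification simultaneously: the extension $\Q(\sqrt{-1})/\Q$, the 2-torsion field $\Q(E[2]) = \Q(\sqrt{b})$ arising from the factorization $X^3 + 2cX^2 + aX = X(X^2 + 2cX + a)$ (whose quadratic factor has discriminant $4b$), and the tame $E[3l]$-piece. This confluence is precisely what forces the enlarged $p = 2$ bound $2^5 \cdot 3^2$ (resp.\ $24l$), in contrast to the much cleaner $e_2(E,F,l) = 2$ obtained in Corollary \ref{1-2-cor1} where the 2-torsion is already $\Q$-rational. Once all contributions are organized along the containment $K \subseteq \Q(E[12l])$ and Lemma \ref{1-2-lem: basic properties} is applied uniformly, the remaining verifications are routine divisibility computations, following the same pattern as in the proofs of Corollaries \ref{1-2-cor2} and \ref{1-2-cor3}.
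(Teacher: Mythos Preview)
Your proposal is correct and follows essentially the same approach as the paper: Tate's algorithm for (i), and the compositum analysis via Lemma \ref{1-2-lem: basic properties} for (ii), exactly in the spirit of Corollaries \ref{1-2-cor0}--\ref{1-2-cor3}. The paper's own proof is in fact even terser than yours---it records $c_4 = 16(a+4b)$, $\Delta = 64a^2b$, notes $\gcd(c_4,\Delta) = \gcd(64,a^2b)$, and then for (ii) simply says the argument ``follows a similar approach to those of Corollary \ref{1-2-cor0} and Corollary \ref{1-2-cor1}''.

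One small point of care: your organizing containment $K \subseteq \Q(E[12l])$ is correct, but by itself it only yields the Corollary \ref{1-2-cor0} bound $e_2(E,F,l) \mid 2^8 \cdot 3^2$ at $p=2$ in the good case, not the sharper $2^5\cdot 3^2$ asserted here. To hit the stated bound you must work directly with $K = \Q(\sqrt{-1},E[6l])$ and split off $e_2(2) \mid \#\GL(2,\Z/2\Z) = 6$ (or, as you note, even $e_2(2)\mid 2$ via $\Q(E[2]) = \Q(\sqrt{b})$) rather than $e_2(4) \mid \#\GL(2,\Z/4\Z) = 96$. Your final paragraph already points in this direction, so this is only a matter of tightening the middle paragraph's phrasing; the substance of the argument is fine.
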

\begin{proof}
For assertion (i), by using Tate's algorithm, we have 
\small\begin{align*}
c_4(E) = 16 (a+4b), \quad
\Delta(E) = 64 a^2 b, \quad 
j(E) = \frac{64 (a+4b)^3}{a^2 b}.
\end{align*}\normalsize
Since $\gcd(a,b,c) = 1$, we have $\gcd(c_4(E), \Delta) = \gcd(64, a^2 b)$, hence $E$ has semi-stable reduction at each $p\neq 2$, and $N = | a^2 b |/\gcd(64,  a^2 b)$.

For assertion (ii), the proof follows a similar approach to those of Corollary \ref{1-2-cor0} and Corollary \ref{1-2-cor1}.
\end{proof}

\subsection{Applications of local inequalities}
We shall talk about a modified version of the methods of \cite{Diophantine_after_IUT_I}, Lemma 3.1.
We shall first introduce some symbols and the conditions they should satisfy.
All necessary symbols [i.e. the symbols that need to be defined when applying the conclusions in this subsection] are marked in bold.

\begin{definition} \label{1-3-def: basic notation for global inequalities}
Throughout this section, we will use the following notation:

(a) Let $\boldsymbol{N, n_0, u_0, p_N}$ be positive integers, such that $v_p(n_0 \cdot N)\ge u_0$ and $p \ge p_N$ for each $p\mid N$.
For each prime number $l$, write $N_l \defeq \prod_{p:l\mid v_p(N)} p^{v_p(N)}$.

(b) Let $\boldsymbol{S}$ be a finite set of prime numbers $\ge 5$, with cardinality $n \ge 2$; 
let $2 \le \boldsymbol{k} \le n$, $\boldsymbol{n_1(S), n_k(S)}$ be positive integers, $\boldsymbol{S_1}$ be a finite set of prime numbers;
write $p_0$ for the sallest prime number in $S$, and write $k(S)$ for the product of the $k$ smallest prime numbers in $S$. 

Suppose that for each prime $p\in B\setminus S_1$ [for the definition of $B$, cf. (e)], we have $v_p(N) \ge n_1(S)$;
 for any $k$ distinct prime numbers $l_1, ..., l_k \in S$ and each prime $p$, if $l_i \mid v_p(N)$ for $1\le i\le k$, then $v_p(N) \ge n_k(S)$.

(c) Let $\boldsymbol{\lambda} > 0$ be a real number. For each $l\in S$, suppose that we have real numbers $\boldsymbol{\textbf{Vol}(l)} \ge 0$, $\boldsymbol{a_1(l)} \ge \boldsymbol{a_4(l)} > 0$, such that 
\small\begin{align} \label{1-1-eq1}
\frac{1}{\lambda}\log(l^{-v_l(N)} \cdot N/N_l  ) \le  a_1(l) \cdot \log\rad(N) - a_4(l) \cdot \log\rad(N_l) + \Vol(l)
\end{align}\normalsize

\textbf{(Optional)} In the present paper, if not otherwise specified, we shall take $\lambda = 6$; instead of defining $a_1(l)$, $a_4(l)$, we shall fix a positive integer $\boldsymbol{e_0}$, and write 
\begin{gather*}
a_1(l) \defeq \frac{l^2+5l}{l^2+l-12}\cdot  (1-\frac{1}{e_0  l}), \;
a_4(l) \defeq \frac{l^2+5l}{l^2+l-12}\cdot \frac{1}{e_0}(1- \frac{1}{l}) .
\end{gather*}

(d) \textbf{(Optional)} Let $\boldsymbol{u'_0}\ge u_0$ be a positive integer, $\boldsymbol{P}$ be a subset of prime numbers, such that for each $p\in P$, if $p\mid N$, then $v_p(n_0N)\ge u'_0$.
Let $\boldsymbol{n'_1(S)} \ge n_1(S)$ be a positive integer, such that for each $p\in B' \setminus S_1$ [for the definition of $B'$, cf. (e)], we have $v_p(N') = v_p(N) \ge n'_1(S)$.

(e) We shall write
\small\begin{gather*}
A\defeq \{p\in S : p\mid N\}, \;
B\defeq \{p\mid N: \exists l\in S,\, l\mid v_p(N) \}, \;
C\defeq \{p\mid N: p \notin (A\cup B)\}, \\
A' \defeq P \cap A, \;
B' \defeq P \cap B, \;
C' \defeq P \cap C, \;
N'\defeq \prod_{p\in P}p^{v_p(N)}, \;
n'_0 \defeq \prod_{p\in P}p^{v_p(n_0)}; \;
N_A \defeq \prod_{p\in A}p^{v_p(N)}, \\
N_B \defeq \prod_{p\in B}p^{v_p(N)},  \;
N_C \defeq \prod_{p\in C}p^{v_p(N)}, \;
N'_A \defeq \prod_{p\in A'}p^{v_p(N')}, \; 
N'_B \defeq \prod_{p\in B'}p^{v_p(N')},  \;
N'_C \defeq \prod_{p\in C'}p^{v_p(N')}; \\
a_1 \defeq \frac{\lambda}{n}\sum_{l\in S}a_1(l), \;
a_2 \defeq \frac{\lambda}{n}\sum_{l\in S}\Vol(l), \;
a_3 \defeq \sum_{l\in S}\log(l), \;
a_4 =\lambda\cdot \min_{l\in S}\{ a_4(l) \}, \;
a_5 = \sum_{p\in S_1} \log(p); \\
b_1 \defeq \max\{\frac{a_1}{u_0}, \frac{k}{n} + \frac{a_1-a_4}{n_1(S)} \}, \;
b'_1 \defeq \max\{\frac{a_1}{u'_0}, \frac{k}{n} + \frac{a_1-a_4}{n'_1(S)} \} \le b_1, \\
\quad  b_2 \defeq \frac{a_1}{u_0}\cdot \log(n_0) + a_2 + a_1a_3 + (a_1-a_4)a_5 .
\end{gather*}\normalsize

\end{definition}

\begin{lem} \label{1-3-lem: local-global inequalities}
Proceeding with the notation in Definition \ref{1-3-def: basic notation for global inequalities} and suppose that $\log(N) < n_k(S) \cdot \log(p_N)$.

(i) We have
\small\begin{gather*}
\sum_{l\in S}v_l(N)\cdot\log(l) = \log(N_A),
\; \sum_{p\in B} \log(p) \le  \sum_{l\in S} \log\rad(N_l) ,
\;  \sum_{p\in C} \log(p) \le \frac{\log(n_0 N_C)}{u_0}, 
\\ \sum_{l\in S} \log(N_l)\le (k-1)\cdot\log(N_B),
\; \sum_{p\in B} \log(p) \le \frac{1}{n_1(S)} \cdot \log(N_B) + a_5 ,
\\ \sum_{p\in B} \log(p) \le \frac{1}{n_1(S)} \cdot \log(N_B/N'_B) +  \frac{1}{n'_1(S)} \cdot \log(N'_B) + a'_5 .
\end{gather*}\normalsize

(ii) We have
\small\begin{align}  \label{1-1-eq3}
   \log(N) \le a_1\cdot\log\rad(N) - a_4 \sum_{p\in B}\log(p) + a_2 + \frac{k}{n}(\log(N)-\log(N_C)).
\end{align}\normalsize

(iii) We have
\small\begin{align*}
    \log(N) \le a_1\cdot\log\rad(N_C) + (\frac{k}{n}+ \frac{a_1-a_4}{n_1(S)})(\log(N)-\log(N_C))
    + a_2 + a_1a_3 + (a_1-a_4)a_5 .
\end{align*}\normalsize

(iv) We have 
\small\begin{align*}
\log(N) \le &  a_1 \cdot \log\rad(N_C) 
+ (\frac{k}{n} + \frac{a_1-a_4}{n_1(S)})(\log(N/N')-\log(N_C/N'_C))
\\ & + (\frac{k}{n} + \frac{a_1-a_4}{n'_1(S)})(\log(N')-\log(N'_C))
+ a_2 + a_1a_3 + (a_1-a_4)a'_5 .
\end{align*}\normalsize

\end{lem}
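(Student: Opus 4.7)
\smallskip

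\noindent
\textbf{Proof plan.}

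\emph{Part (i).} I would prove the six inequalities in order, each by elementary bookkeeping. The first identity is immediate from the definition of $N_A$ (for $p\in S\setminus A$ we have $v_p(N)=0$). The inequality $\sum_{p\in B}\log p\le \sum_{l\in S}\log\rad(N_l)$ holds because, by definition of $B$, every $p\in B$ appears in the radical of at least one $N_l$. The bound $\sum_{p\in C}\log p\le\log(n_0N_C)/u_0$ follows from the hypothesis $v_p(n_0N)\ge u_0$ for every $p\mid N$, applied prime by prime. For $\sum_{l\in S}\log(N_l)\le (k-1)\log N_B$ — the only step whose proof isn't pure rewriting — I would argue by contradiction: if $k$ distinct primes $l_1,\dots,l_k\in S$ all divided $v_p(N)$, the hypothesis would give $v_p(N)\ge n_k(S)$, hence $\log N\ge v_p(N)\log p\ge n_k(S)\log p_N$, contradicting the standing assumption $\log N<n_k(S)\log p_N$; so each prime $p$ contributes to at most $k-1$ of the $N_l$'s, and one sums $v_p(N)\log p$ over $p\in B$ with multiplicity $\le k-1$. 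The last two bounds on $\sum_{p\in B}\log p$ come from splitting $B$ as $(B\setminus S_1)\sqcup(B\cap S_1)$ (respectively along $P$), applying the hypothesis $v_p(N)\ge n_1(S)$ (resp.\ $\ge n'_1(S)$) to the first piece and using $a_5$ (resp.\ $a'_5$) to absorb the second.

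\emph{Part (ii).} Sum the given inequality \eqref{1-1-eq1} over $l\in S$. The left-hand side becomes $\tfrac{1}{\lambda}\bigl(n\log N-\sum_l v_l(N)\log l-\sum_l\log N_l\bigr)$, and (i) replaces these sums by $\log N_A$ and $\le (k-1)\log N_B$ respectively. On the right-hand side, the definition of $a_1,a_2$ gives $\sum_l a_1(l)=na_1/\lambda$, $\sum_l\Vol(l)=na_2/\lambda$, while each $a_4(l)\ge a_4/\lambda$ together with $\sum_l\log\rad(N_l)\ge\sum_{p\in B}\log p$ from (i) yields an upper bound for $-\sum_l a_4(l)\log\rad(N_l)$. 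After multiplying through by $\lambda/n$ and absorbing $\tfrac{1}{n}\log N_A+\tfrac{k-1}{n}\log N_B$, the only non-routine point is the algebraic inequality
\[
\tfrac{1}{n}\log N_A+\tfrac{k-1}{n}\log N_B\;\le\;\tfrac{k}{n}\bigl(\log N-\log N_C\bigr),
\]
which reduces to $(k-1)\log N_A+\log N_B\ge k\log N_{A\cap B}$ after using the identity $\log(N/N_C)=\log N_A+\log N_B-\log N_{A\cap B}$; this in turn holds because $N_{A\cap B}$ divides both $N_A$ and $N_B$.

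\emph{Part (iii).} Start from the conclusion of (ii), and split $\log\rad(N)=\log\rad(N_A)+\log\rad(N_B)+\log\rad(N_C)\le a_3+\log\rad(N_B)+\log\rad(N_C)$ (here $\rad(N_A)$ is a product over a subset of $S$, bounded by $a_3=\sum_{l\in S}\log l$). Since $\log\rad(N_B)=\sum_{p\in B}\log p$, the terms in $\sum_{p\in B}\log p$ combine with coefficient $a_1-a_4$. Finally, from (i), $\sum_{p\in B}\log p\le\tfrac{1}{n_1(S)}\log N_B+a_5$, and $\log N_B\le\log(N/N_C)=\log N-\log N_C$ (using $N_{A\cap B}\le N_A$). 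Collecting terms gives the stated inequality.

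\emph{Part (iv).} This is the same argument as (iii), but using the sharper version of the sixth bound in (i): writing $B=(B\setminus B')\sqcup B'$ and applying $v_p(N)\ge n_1(S)$ on the complement and the stronger bound $v_p(N)\ge n'_1(S)$ on $B'$. One further needs the routine inequalities $\log(N_B/N'_B)\le \log(N/N_C)-\log(N'/N'_C)$ and $\log(N'_B)\le\log(N'/N'_C)$, both of which follow from the inclusion $A\cap B\subseteq A$ as in (ii). Combining with the (iii)-style estimate produces the two-coefficient form stated.

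The only conceptually non-trivial step throughout is the reduction $\sum_l\log N_l\le (k-1)\log N_B$ in (i), which crucially uses the hypothesis $\log N<n_k(S)\log p_N$; the rest is careful bookkeeping with $A$, $B$, $C$ and the intersection $A\cap B$.
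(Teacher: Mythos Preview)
Your proposal is correct and follows essentially the same route as the paper. The only spot where the paper is marginally simpler is the last step of (ii): rather than invoking $N_{A\cap B}$ and inclusion--exclusion, the paper just notes that $A\cap C=B\cap C=\emptyset$ gives $\log N_A,\log N_B\le\log N-\log N_C$, from which $\tfrac{1}{n}\log N_A+\tfrac{k-1}{n}\log N_B\le\tfrac{k}{n}(\log N-\log N_C)$ is immediate.
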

\begin{proof}
For assertion (i), by the definition of the set $A$, we have 
\small\begin{align*}
    \sum_{l\in S}v_l(N)\cdot\log(l) = \sum_{l:l\in S,l\mid N}v_l(N)\cdot\log(l) = \sum_{l\in A}v_l(N)\cdot\log(l) = \log(N_A).
\end{align*}\normalsize

By the definition of $N_l$ and $B$, we have 
\small\begin{align*}
\sum_{l\in S} \log\rad(N_l)
= \sum_{l \in S} \sum_{p: p\mid N, l\mid v_p(N)} \log(p)
= \sum_{p\in B}\log(p)\cdot\big(\sum_{l:l \in S,l\mid v_p(N)} 1 \big)
\ge \sum_{p\in B} \log(p) 
\end{align*}\normalsize

By the definition of $N_l$ and $B$, we also have 
\small\begin{align*}
\sum_{l\in S} \log(N_l)
= \sum_{l \in S} \sum_{p:p\mid N,l\mid v_p(N)} v_p(N)\cdot \log(p)
= \sum_{p\in B}v_p(N)\cdot \log(p)\cdot\big(\sum_{l:l \in S,l\mid v_p(N)} 1 \big).
\end{align*}\normalsize

By the definition of $n_0$ and $u_0$, for each $p\in C$, we have $v_p(n_0 \cdot N) \ge u_0$, hence 
\small\begin{align*}
\sum_{p\in C} \log(p) \le&  \frac{1}{u_0} \sum_{p\in C} (v_p(n_0) + v_p(N) ) \cdot \log(p) 
\\ \le&  \frac{1}{u_0} \sum_{p\in C} v_p(N)\cdot \log(p) 
+ \frac{1}{u_0} \sum_{p} v_p(n_0) \cdot \log(p)   
= \frac{\log(n_0 N_C)}{u_0}.
\end{align*}\normalsize

Now for each prime number $p\in B$, we claim that $\sum_{p:p\in S, p\mid v_l(N)} 1 \le k-1$, which can be proved by contradiction as follows.
Assume that $l_1,\dots, l_k\in S$ are $k$ distinct prime numbers, such that $l_i\mid v_l(N)$ for $1\le i\le k$. Then by the definition of $n_k(S)$, we have $v_p(N) \ge n_k(S)$, hence
\begin{align*}
    \log(N) = \log(N) &\ge v_p(N)\cdot \log(p)\ge n_k(S) \cdot \log(p_N)  > \log(N).
\end{align*}
--- a contradiction. Thus the claim is true, hence we have 
\small\begin{align*}
    \sum_{l\in S} \log(N_l)
    &= \sum_{p\in B} v_p(N)\cdot \log(p)\cdot\big(\sum_{l: l \in S, l\mid v_p(N)} 1 \big)
   \\&\le (k-1)\cdot\sum_{p\in B}v_p(N)\cdot \log(p) = (k-1)\cdot\log(N_B).
\end{align*}\normalsize

By the definition of $n_1(S)$ and $n_1(p)$, we have $\log(p) \le \frac{1}{n_1(S)}\cdot v_p(N)\cdot\log(p)$ for each $p\in B\setminus S_1$, and $\frac{n_1(p)}{n_1(S)} \cdot \log(p) \le \frac{1}{n_1(S)}\cdot v_p(N)\cdot\log(p)$ for each $p\in B\cap S_1$. Hence
\small\begin{align*}
\sum_{p\in B} \log(p) \le & \frac{1}{n_1(S)} \cdot \sum_{p\in B} v_p(N)\cdot\log(p) + \sum_{p\in B\cap S_1}  (1-\frac{n_1(p)}{n_1(S)})\cdot\log(p)
 \\ \le & \frac{1}{n_1(S)} \cdot \log(N_B) + a_5 .
\end{align*}\normalsize
Similarly, we have
\small\begin{align*}
\sum_{p\in B} \log(p)   \le & \frac{1}{n_1(S)} \cdot \log(N_B/N'_B) + \frac{1}{n'_1(S)} \cdot \log(N'_B) + a_5 .
\end{align*}\normalsize
This proves assertion (i).

Next, we consider assertion (ii). 
By  (\ref{1-1-eq1}) we have 
\begin{equation} \small
\begin{aligned} \label{1-1-eq2}
\log(N) \le \lambda a_1(l)\cdot \log\rad(N) - \lambda a_4(l) \cdot \log\rad(N_l)  
 + \lambda \Vol(l)  + \log(N_l) + v_l(N) \cdot \log(l) .
\end{aligned}
\end{equation}
Then by taking the average of (\ref{1-1-eq2}) for $l\in S$ and by assertion (i), we have
\begin{equation*} \small
\begin{aligned} 
\log(N) \le &  a_1 \log\rad(N) - a_4  \sum_{l\in S}\log\rad(N_l)  + a_2 
+ \frac{1}{n}\sum_{l\in S}\log(N_l)  + \frac{1}{n}\sum_{l\in S} v_l(N) \cdot \log(l) 
\\ \le &  a_1 \log\rad(N) - a_4  \sum_{p\in B}\log(p)  + a_2 + \frac{1}{n}(\log(N_A)+(k-1)\log(N_B) ).
\end{aligned}
\end{equation*}
Then since $\log(N_A), \log(N_B) \le \log(N) - \log(N_C)$ by the fact $A\cap C = B \cap C =\emptyset$, we have 
\small\begin{align*}
   \log(N) \le a_1 \log\rad(N) - a_4  \sum_{p\in B}\log(p) + a_2 + \frac{k}{n}(\log(N) - \log(N_C)).
\end{align*}\normalsize
This proves assertion (ii).

For assertion (iii), by the definition of $A$, $B$ and $C$, for each $p\mid N$, we have $p\in A\cup B\cup C \subseteq S\cap B \cap C$, hence by (i) we have (note that $a_1\ge a_4$ by definitinon)
\small\begin{align*}
&a_1 \log\rad(N) - a_4  \sum_{p\in B}\log(p) 
\le a_1\sum_{p\in S}\log(p) + (a_1-a_4)\sum_{p\in B}\log(p) + a_1\sum_{p\in C} \log(p)
\\ \le& a_1 a_3 +  (a_1-a_4)(\frac{\log(N_B)}{n_1(S)}  + a_5) + a_1\sum_{p\in C} \log(p).
\end{align*}\normalsize
Then by (\ref{1-1-eq3})  and  $\log(N_B) \le \log(N) - \log(N_C)$   we have  
\begin{equation*} \small
\begin{aligned}
& \log(N) \le   a_1 \log\rad(N) - a_4  \sum_{p\in B}\log(p)  + a_2 + \frac{k}{n}(\log(N) - \log(N_C) )
\\ \le & a_1 a_3 + (a_1-a_4)(\frac{\log(N_B)}{n_1(S)}  + a_5) + a_1\sum_{p\in C} \log(p) + a_2 + \frac{k}{n}(\log(N) - \log(N_C) )
\\ \le& a_1\cdot\log\rad(N_C) + (\frac{k}{n}+ \frac{a_1-a_4}{n_1(S)})(\log(N)-\log(N_C))
+ a_2 + a_1a_3 + (a_1-a_4)a_5 .
\end{aligned}
\end{equation*}
This proves assertion (iii).

The proof of assertion (iv) is similar to the proof of assertion (iii), where we shall make use of $\sum_{p\in B} \log(p) \le \frac{1}{n_1(S)} \cdot \log(N_B/N'_B) +  \frac{1}{n'_1(S)} \cdot \log(N'_B) + a_5$ by (i), and $\log(N_B/N'_B) \le \log(N/N') - \log(N_C/N'_C)$, $\log(N'_B) \le \log(N') - \log(N'_C)$ by the fact $(B\setminus B') \cap (C\setminus C') = B'\cap C' = B \cap C = \emptyset$.
\end{proof}

\begin{prop} \label{1-3-prop: local-global inequalities}
Proceeding with the notation in Definition \ref{1-3-def: basic notation for global inequalities} and suppose that $\log(N) < n_k(S) \cdot \log(p_N)$.

(i) Suppose that $b_1 < 1$, then we have
\begin{align} \label{1-2-eq0}
(1-b_1) \cdot \log(N) \le b_2 .
\end{align}
Hence if $n_k(S)\cdot \log(p_N) > \frac{b_2}{1-b_1}$, then $\log(N)$ cannot belong to the interval $$(\frac{b_2}{1-b_1}, n_k(S)\cdot \log(p_N) ) .$$

(ii) Suppose that $b_1 \le 1$, $b'_1 < 1$, then we have
\begin{align*}
(1-b'_1) \cdot \log(N') \le b_2 .
\end{align*}
Hence if $n_k(S)\cdot \log(p_N) > \frac{b_2}{1-b'_1}$, then $\log(N')$ cannot belong to the interval $$(\frac{b_2}{1-b'_1}, n_k(S)\cdot \log(p_N) ) .$$

\end{prop}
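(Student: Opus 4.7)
The plan is to bootstrap the three estimates of Lemma \ref{1-3-lem: local-global inequalities} into a self-bound of the form ``$(1-b_1)\log(N)\le b_2$'' and its relative analogue for $\log(N')$. For part (i), I would start from the inequality of Lemma \ref{1-3-lem: local-global inequalities}(iii) and replace $\log\rad(N_C)$ by $\tfrac{1}{u_0}\log(n_0N_C)$ using assertion (i) of the same lemma. After expanding $\log(n_0N_C)=\log(n_0)+\log(N_C)$, this rewrites as $\log(N)\le b_2 + A_2\log(N_C) + A_3\log(N)$ with $A_3=\tfrac{k}{n}+\tfrac{a_1-a_4}{n_1(S)}$ and $A_2=\tfrac{a_1}{u_0}-A_3$, the constant collapsing to exactly $b_2$ by its definition. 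A sign analysis now eliminates $\log(N_C)$: if $A_2\ge 0$, then $b_1=\tfrac{a_1}{u_0}$ and $\log(N_C)\le\log(N)$ makes the coefficient of $\log(N)$ equal to $A_2+A_3=b_1$; if $A_2<0$, then $b_1=A_3$ and $\log(N_C)\ge 0$ allows us to drop $A_2\log(N_C)$, leaving coefficient $A_3=b_1$. Either way $\log(N)\le b_2+b_1\log(N)$, whence the claimed inequality once $b_1<1$. The interval statement follows by contraposition: any $\log(N)$ in $\bigl(\tfrac{b_2}{1-b_1},\,n_k(S)\log(p_N)\bigr)$ satisfies the hypothesis of Lemma \ref{1-3-lem: local-global inequalities} and hence the conclusion just proved, a contradiction.

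For part (ii) I would instead start from Lemma \ref{1-3-lem: local-global inequalities}(iv) and split $\log\rad(N_C)=\log\rad(N_C/N'_C)+\log\rad(N'_C)$. Primes in $C\setminus C'\subseteq C\setminus P$ satisfy $v_p(n_0N)\ge u_0$, giving $\log\rad(N_C/N'_C)\le \tfrac{1}{u_0}\log\bigl(\tfrac{n_0N_C}{n'_0N'_C}\bigr)$, while primes in $C'\subseteq P$ satisfy $v_p(n_0N)\ge u'_0$, giving $\log\rad(N'_C)\le \tfrac{1}{u'_0}\log(n'_0N'_C)$. Substituting, expanding and regrouping by $\log(N/N'),\log(N_C/N'_C),\log(N'),\log(N'_C)$ produces
\[
\log(N)\le B_0 + A_2\log(N_C/N'_C) + A_3\log(N/N') + A'_2\log(N'_C) + A'_3\log(N'),
\]
with $A_2,A_3$ as in (i) and $A'_3=\tfrac{k}{n}+\tfrac{a_1-a_4}{n'_1(S)}$, $A'_2=\tfrac{a_1}{u'_0}-A'_3$; the constant $B_0$ is $\le b_2$ because $\tfrac{a_1}{u'_0}\log(n'_0)\le \tfrac{a_1}{u_0}\log(n'_0)$. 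Running the same sign analysis independently on each pair $(A_2,A_3)$ and $(A'_2,A'_3)$ yields coefficients $b_1$ on $\log(N/N')$ and $b'_1$ on $\log(N')$, i.e.\ $\log(N)\le b_1\log(N/N')+b'_1\log(N')+b_2$. Rewriting $\log(N)=\log(N/N')+\log(N')$ gives $(1-b_1)\log(N/N')+(1-b'_1)\log(N')\le b_2$; since $b_1\le 1$ and $\log(N/N')\ge 0$, the first summand is non-negative and may be dropped, yielding $(1-b'_1)\log(N')\le b_2$. The interval statement follows exactly as in (i).

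The main subtlety I expect is that the sign case-split has to be performed coherently in ``two coordinates'' for part (ii); this rests on the monotonicity $b'_1\le b_1$ already noted in Definition \ref{1-3-def: basic notation for global inequalities} (a consequence of $u'_0\ge u_0$ and $n'_1(S)\ge n_1(S)$). A secondary bookkeeping point is the absorption of the modified constant produced by the $P$/non-$P$ split of $\log\rad(N_C)$ back into $b_2$, which reduces to the elementary inequality $\tfrac{a_1}{u'_0}\log(n'_0)\le\tfrac{a_1}{u_0}\log(n'_0)$ above. All remaining steps are routine substitutions from Lemma \ref{1-3-lem: local-global inequalities}.
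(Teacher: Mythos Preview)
Your proposal is correct and follows essentially the same route as the paper's proof: start from Lemma~\ref{1-3-lem: local-global inequalities}(iii) (resp.\ (iv)), bound $\log\rad(N_C)$ via the inequality $\sum_{p\in C}\log(p)\le \tfrac{1}{u_0}\log(n_0N_C)$ (resp.\ its split into $P$ and non-$P$ parts), and then absorb the two resulting coefficients into $b_1$ (resp.\ $b_1$ and $b'_1$) to reach $\log(N)\le b_1\log(N)+b_2$ (resp.\ $\log(N)\le b_1\log(N/N')+b'_1\log(N')+b_2$). Your sign case-split on $A_2$ is equivalent to the paper's one-line observation that $\tfrac{a_1}{u_0}\log(N_C)+A_3(\log(N)-\log(N_C))\le b_1\log(N)$ because both coefficients are $\le b_1$ and both logarithms are non-negative; the latter phrasing is slightly cleaner and avoids the case distinction, but there is no substantive difference. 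One small remark: the inequality $b'_1\le b_1$ you flag as a ``main subtlety'' is not actually needed for the argument---the two sign analyses in part~(ii) can be carried out independently regardless, and the only place the ordering of the $b$'s enters is through the hypothesis $b_1\le 1$ used to drop $(1-b_1)\log(N/N')$.
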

\begin{proof}
For assertion (i), recall that by Lemma \ref{1-3-lem: local-global inequalities}, (i) we have 
\small\begin{align*}
\log\rad(N_C) = \sum_{p\in C} \log(p) \le \frac{1}{u_0}\cdot \log(n_0 N_C) .
\end{align*}\normalsize
Then by Lemma \ref{1-3-lem: local-global inequalities}, (iii) and the definition of $b_1$, $b_2$, we have
\begin{equation*} \small
\begin{aligned}
\log(N) \le&  \frac{a_1}{u_0}\cdot\log(n_0 N_C) + (\frac{k}{n}+ \frac{a_1-a_4}{n_1(S)})(\log(N)-\log(N_C))
+ a_2 + a_1a_3 + (a_1-a_4)a_5 
\\ =& \frac{a_1}{u_0}\cdot \log(N_C) + (\frac{k}{n}+ \frac{a_1-a_4}{n_1(S)})(\log(N)-\log(N_C)) + b_2 
\\ \le& b_1\cdot \log(N) + b_2 .
\end{aligned}
\end{equation*}
Hence we must have (\ref{1-2-eq0}), and then assertion (i) follows.

For assertion (ii),  similar to the proof of assertion (i), by Lemma \ref{1-3-lem: local-global inequalities}, (i) and (iv) we have
\small\begin{align*}
\log(N) \le &  a_1 \cdot \log\rad(N_C/N'_C) - \frac{a_1}{u_0}\log(n_0/n'_0) 
+ (\frac{k}{n} + \frac{a_1-a_4}{n_1(S)})(\log(N/N')-\log(N_C/N'_C))
\\ & +  a_1 \cdot \log\rad(N'_C) - \frac{a_1}{u'_0}\log(n'_0) + (\frac{k}{n} + \frac{a_1-a_4}{n'_1(S)})(\log(N')-\log(N'_C))
+ b_2 
\\ \le &  b_1 \cdot \log(N/N') + b'_1 \cdot \log(N') + b_2 .
\end{align*}\normalsize
Then since $b_1 \le 1$, we must have $\log(N') \le b'_1 \cdot \log(N') + b'_2$, and hence assertion (ii) follows.
\end{proof}

\begin{tiny-remark} \label{1-3-rmk:  local-global inequalities}
(i) Proposition \ref{1-3-prop: local-global inequalities} provides an algorithm to eliminate impossible value intervals of $\log(N)$ by choosing suitable $n_0, N, u_0, p_N$, $S, k, S_1, n_1(S), n_k(S)$, $\lambda$, $e_0$, $\Vol(l)$, $u'_0, P, n'_1(S)$, as defined in Definition \ref{1-3-def: basic notation for global inequalities}.
From a mathematically rigorous point of view, the value of $\Vol(l)$ is obtained by constructing general $\mu_6$- or $2$-torsion initial $\Theta$-data and estimating the log-volume, and we shall prove $\log(N) \le C_0$ for some explicit (but possibly very large) real number $C_0 > 0$ at first.
Then by choosing finite many suitable and explicit $S$, $k$, $\Vol(l)$, etc.,  in Definition \ref{1-3-def: basic notation for global inequalities}, we can apply Proposition \ref{1-3-prop: local-global inequalities} to show that $\log(N)$ can't belong to the interval $(C, C_0)$, where $C>0$ is a  real number much smaller than $C_0$. Hence we can obtain a smaller upper bound $\log(N) \le C$. Smaller upper bound for $\log(N')$ can be obtained similarly.

(ii)  For an example for the procedure described in (i), cf. \cite{Diophantine_after_IUT_I}, Section 4. The step to show $\log(N) \le C_0$ can be proved via estimation, and the step to show $\log(N) \notin (C,C_0)$ can be done by the computation via computer. 
In this paper, when we apply Proposition \ref{1-3-prop: local-global inequalities}, we shall omit the step to show $\log(N) \le C_0$ using analytic number theory.

Instead, we shall only choose finitely many suitable $S$, $k$, $\Vol(l)$, etc., in Definition \ref{1-3-def: basic notation for global inequalities} by constructing suitable $\mu_6$- or $2$-torsion initial $\Theta$-data to define $\Vol(l)$, and compute the step to show $\log(N) \notin (C,C_1)$ [with $C$ small and $C_1$ bigger] via computer. 

We claim that it is always possible to show $\log(N) < C_0$ for some $C_0 > C_1$ by the procedure described in (i), 
and show $\log(N) \notin (C_1, C_0)$ by computing more special cases via Proposition \ref{1-3-prop: local-global inequalities}. Hence we can deduce that $\log(N) \le C$.
\end{tiny-remark}

\section{The general signatures}
Let $r,s,t \ge 4$ be positive integers. In this section, we shall consider about the generalized Fermat equation $x^r+y^s=z^t$ with signature $(r,s,t)$. 

\subsection{Upper bounds}

\begin{lem}  \label{2-1-lem-1}
Let $r,s,t\ge 4$ be positive integers, $(x,y,z)$ be a triple of positive coprime integers such that $\delta_r x^r + \delta_s y^s = z^t$, where $\delta_r,\delta_s \in \{\pm 1\}$. Let $l\ge 11$ be a prime number.

(i) Let $(a,b,c)$ be a permutation of $(\delta_r x^r, \delta_s y^s, -z^t)$, such that $4\mid (a+1)$ and $16\mid b$. Then $a+b+c = 0$, $\gcd(a,b,c) = 1$.

Let $E$ be the elliptic curve defined over $\Q$ by the equation 
$$Y^2 = X(X- a)(X+ b),$$
then the $j$-invariant
$$j(E) = \frac{(a^2+ab+b^2)^3}{2^{-8}a^2b^2c^2} = \frac{(a^2+ab+b^2)^3}{2^{-8}x^{2r}y^{2s}z^{2t} } \notin \{0,2^6\cdot 3^3,2^2\cdot 73^3\cdot 3^{-4},2^{14}\cdot 31^3\cdot 5^{-3}\} .$$
Write 
$$N \defeq 2^{-8}x^{2r}y^{2s}z^{2t}, \; N_l \defeq N^{\lagl}, \; N'_l \defeq (N/N_l)_{(l)}, \;  N'_{2l} \defeq (N/N_l)_{(2l)}. $$
Then $x,y,z \ge 2$, $N$ is the denominator of $j(E)$ and $E$ is semi-stable.

(ii) Write $F\defeq \Q(\sqrt{-1}, E[3])$, then when $N'_l \neq 1$, by the construction in \cite{Diophantine_after_IUT_I}, Proposition 2.5, there exists a $\mu_6$-initial $\Theta$-data $\mathfrak D = \mathfrak D(E,F,l,\mu_6)$ of type $(l, N, N'_l)$.

Let $\mathfrak{R}_l^{(1)}$ be the ramification dataset [cf. \cite{Diophantine_after_IUT_I}, Definition 1.12] consists of the following data:
\begin{gather*}
l_0 = l, \; e_0 = 3, \; S_0=\{2,3,l\}, \; S_{\gen}^{\multi} = \{1,3,l,3l\}, \; S_2^{\good} = \{2\}, \\ 
S_2^{\multi} = \{2,6,2l,6l\}, \; S_3^{\good} = \{2,6,8\},  \; S_3^{\multi} = \{2,6,2l,6l\}, \\
S_l^{\good} = \{l-1, l(l-1), l^2-1\}, \;   S_l^{\multi} = \{l-1, 3(l-1), l(l-1), 3l(l-1)\} .
\end{gather*}
Let $\mathfrak{R}_l^{(2)}$ be the ramification dataset obtained by setting $S_l^{\good} = \emptyset$ in the definition of $\mathfrak{R}_l^{(1)}$, i.e. the ramification dataset consists of the following data:
\begin{gather*}
l_0 = l, \; e_0 = 3, \; S_0=\{2,3,l\}, \; S_{\gen}^{\multi} = \{1,3,l,3l\}, \; S_2^{\good} = \{2\}, \\ 
S_2^{\multi} = \{2,6,2l,6l\}, \; S_3^{\good} = \{2,6,8\},  \; S_3^{\multi} = \{2,6,2l,6l\}, \\
S_l^{\good} = \emptyset, \;   S_l^{\multi} = \{l-1, 3(l-1), l(l-1), 3l(l-1)\} .
\end{gather*}
Then $\mathfrak D$ admits $\mathfrak R_l^{(1)}$; and if $l\mid xyz$, then $\mathfrak D$ admits $\mathfrak R_l^{(2)}$.
Hence we have 
\begin{equation} \small \label{2-1-eq1}
    \frac{1}{6}\log(N'_l) \le  \frac{l^2+5l}{l^2+l-12}\cdot \big( (1-\frac{1}{3  l})\cdot \log\rad(N) - \frac{1}{3}(1- \frac{1}{l}) \log\rad(N_l)  \big) + \Vol(l),
\end{equation}
where we write $\Vol(l) \defeq \Vol(\mathfrak R_l^{(1)})$ if $l\nmid xyz$, and write $\Vol(l) \defeq \Vol(\mathfrak R_l^{(2)}) \le \Vol(\mathfrak R_l^{(1)})$ if $l\mid xyz$.
The above inequality holds for $N'_l = 1$.

(iii) Write $F\defeq \Q(\sqrt{-1})$, then when $N'_{2l} \neq 1$, by the construction in Proposition \ref{1-prop: construction of mu_2 initial Theta-data}, there exists a $2$-torsion initial $\Theta$-data $\mathfrak D = \mathfrak D(E,F,l,2\text{-tor})$ of type $(l, N, N'_{2l})$.

Let $\mathfrak{R}_l^{(3)}$ be the ramification dataset [cf. \cite{Diophantine_after_IUT_I}, Definition 1.12] consists of the following data:
\begin{gather*}
l_0 = l, \;  e_0 = 1, \; S_0=\{2,l\}, \; S_{\gen}^{\multi} = \{1,l\}, \; S_2^{\good} = \{2\}, \; 
S_2^{\multi} = \{2,2l\}, \\
S_l^{\good} = \{l-1, l(l-1), l^2-1\}, \;   S_l^{\multi} = \{l-1, l(l-1)\} .
\end{gather*}
Let $\mathfrak{R}_l^{(4)}$ be the ramification dataset obtained by setting $S_l^{\good} = \emptyset$ in the definition of $\mathfrak{R}_l^{(3)}$.
Then $\mathfrak D$ admits $\mathfrak R_l^{(3)}$; and if $l\mid xyz$, then $\mathfrak D$ admits $\mathfrak R_l^{(4)}$.
Hence we have 
\begin{equation} \small \label{2-1-eq2}
    \frac{1}{6}\log(N'_l) \le  \frac{l^2+5l}{l^2+l-12}\cdot (1-\frac{1}{l})\cdot \log\rad(N/N_l)  + \Vol(l) ,
\end{equation}
where we write $\Vol(l) \defeq \Vol(\mathfrak R_l^{(3)})$ if $l\nmid xyz$, and write $\Vol(l) \defeq \Vol(\mathfrak R_l^{(4)}) \le \Vol(\mathfrak R_l^{(3)})$ if $l\mid xyz$.

\end{lem}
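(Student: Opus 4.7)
First I would construct the permutation $(a,b,c)$. A mod-$2$ analysis using $\gcd(x,y,z)=1$ and $\delta_r x^r + \delta_s y^s = z^t$ shows that exactly one of $x,y,z$ is even; since $r,s,t\ge 4$, its signed power is divisible by $2^4=16$ and is taken as $b$. The other two signed powers are odd, and as $b\equiv 0\pmod 4$ their sum is $\equiv 0\pmod 4$, so exactly one of them is $\equiv -1\pmod 4$, which I label $a$. Coprimality $\gcd(a,b,c)=1$ is immediate from $\gcd(x,y,z)=1$. The $j$-invariant formula, the denominator $N = 2^{-8}x^{2r}y^{2s}z^{2t}$, and semi-stability are read off from Corollary \ref{1-2-cor1}(i), after noting that $Y^2=X(X-a)(X+b)$ is isomorphic to the integral model there. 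For $x,y,z\ge 2$: if any of them equals $1$, the equation reduces to $|\pm u^m \pm v^n|=1$ with $m,n\ge 4$, which is excluded by Mih\u{a}ilescu's theorem (the Catalan solution $3^2-2^3=1$ has both exponents $\le 3$). The exclusion $j(E)\notin\{0,2^6\cdot 3^3,2^2\cdot 73^3\cdot 3^{-4},2^{14}\cdot 31^3\cdot 5^{-3}\}$ is handled by noting that $j(E)=0$ is impossible since $a^2+ab+b^2$ is positive-definite as a binary form, and that the remaining three values, whose denominators are $1$, $3^4$, $5^3$, would force a prime factorization of $N$ incompatible with the high powers $x^{2r},y^{2s},z^{2t}$ when $r,s,t\ge 4$.

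\textbf{Part (ii).} I invoke \cite{Diophantine_after_IUT_I}, Proposition 2.5 with $F=\Q(\sqrt{-1},E[3])$. The hypotheses are checked as follows: $F/\Q$ is Galois; $\sqrt{-1}\in F$; $F(E[6])=F$ since $E[2]\subseteq\Q$ (from the model $Y^2=X(X-a)(X+b)$) and $E[3]\subseteq F$; $E_F$ is semi-stable by Corollary \ref{1-2-cor1}(i); $F\subseteq\Q(E[12])$ because $\Q(\sqrt{-1})=\Q(\mu_4)\subseteq\Q(E[4])$ via the Weil pairing; $l\nmid[F:\Q]$ since $[F:\Q]$ divides $2\cdot|\GL(2,\F_3)|=96$ and $l\ge 11$; the exceptional $j$-values are excluded by (i); and $N'_l\ne 1$ follows from $x,y,z\ge 2$ producing primes $\ne l$ in $N$ whose exponents are not divisible by $l$. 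This yields $\mathfrak{D}$ of type $(l,N,N'_l)$. Matching to $\mathfrak{R}_l^{(1)}$ is then a prime-by-prime comparison: at each $p\in\{2,3,l\}$ and at generic primes $p\nmid 6l$, the divisor patterns for $e_p(E,F,l)$ produced by Corollary \ref{1-2-cor1}(ii) (separated by good vs.\ multiplicative reduction) enumerate exactly the sets $S_p^{\good}$, $S_p^{\multi}$, $S_{\gen}^{\multi}$ of $\mathfrak{R}_l^{(1)}$. When $l\mid xyz$ we have $l\mid N$, forcing multiplicative reduction at $l$; hence $S_l^{\good}$ can be dropped, giving $\mathfrak{R}_l^{(2)}$. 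Applying Corollary \ref{1-cor: The log-volume of ramification dataset} with $e_0=3$ yields (\ref{2-1-eq1}), which holds trivially when $N'_l=1$.

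\textbf{Part (iii).} The argument parallels (ii) with $F=\Q(\sqrt{-1})$, $e_0=1$, and Proposition \ref{1-prop: construction of mu_2 initial Theta-data} in place of Proposition 2.5. Its conditions are verified: $F(E[2])=F$ is immediate since $E[2]\subseteq\Q$; $E$ semi-stable with $l\ge 11$ satisfies (c); $F=\Q(\mu_4)\subseteq\Q(E[4])$ with $l\nmid 4$ satisfies (a); and $N'_{2l}\ne 1$ follows just as in (ii). This produces a $2$-torsion initial $\Theta$-data of type $(l,N,N'_{2l})$. The ramification-index bounds from Corollary \ref{1-2-cor1}(iii) match the divisor patterns in $\mathfrak{R}_l^{(3)}$, and $\mathfrak{R}_l^{(4)}$ applies when $l\mid xyz$ enforces multiplicative reduction at $l$. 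Finally, Corollary \ref{1-cor: The log-volume of ramification dataset} with $e_0=1$, together with the identity $\log\rad(N)-\log\rad(N_l)=\log\rad(N/N_l)$ (the primes of $N_l$ and $N/N_l$ being disjoint by the definition of $N_l$), yields (\ref{2-1-eq2}).

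\textbf{Anticipated obstacle.} The most delicate step is ruling out the four exceptional $j$-values in (i): each is a finite arithmetic check but easy to botch, and the case $j(E)=1728$ in particular requires examining which sign/parity patterns of $(a,b,c)$ could solve $4(a^2+ab+b^2)^3=27 a^2 b^2 c^2$ and eliminating them using the structural constraints $4\mid(a+1)$, $16\mid b$, and $r,s,t\ge 4$. The ramification-dataset matching is conceptually routine but bookkeeping-heavy, requiring careful enumeration of divisor sets at each prime and reduction type.
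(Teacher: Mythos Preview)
Your approach is essentially the same as the paper's: Part (i) is handled via Corollary \ref{1-2-cor1}(i) together with Catalan's conjecture; Parts (ii) and (iii) verify the hypotheses of \cite{Diophantine_after_IUT_I}, Proposition 2.5 (resp.\ Proposition \ref{1-prop: construction of mu_2 initial Theta-data}), match the ramification indices supplied by Corollary \ref{1-2-cor1}(ii)--(iii) against the listed datasets, and conclude via Corollary \ref{1-cor: The log-volume of ramification dataset}. Your write-up is in fact more explicit than the paper's, particularly in arguing $j(E)\notin J$, which the paper simply asserts.

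One small correction: in (ii) and (iii) you should not try to \emph{prove} that $N'_l\neq 1$ (resp.\ $N'_{2l}\neq 1$). These are hypotheses in the statement, not conclusions, and your proposed justification (``$x,y,z\ge 2$ producing primes $\neq l$ in $N$ whose exponents are not divisible by $l$'') is not valid in general---nothing prevents every exponent $v_p(N)$ with $p\neq l$ (or $p\neq 2,l$) from being divisible by $l$. The degenerate case $N'_l=1$ is handled, as you correctly note at the end, by $\Vol(l)\ge 0$; the paper makes exactly the same observation.
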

\begin{proof}
For assertion (i), since $r,s,t \ge 4$, we have $v_2(x^r y^s z^t) \ge 4$, hence $(a,b,c)$ exists. Then by Corollary \ref{1-2-cor1}, we have $j(E) = \frac{(a^2+ab+b^2)^3}{2^{-8}a^2b^2c^2}$, $N$ is the denominator of $j(E)$, and $E$ is semi-stable. In addition, we have $x,y,z \ge 2$ by Catalan's conjecture.

For assertion (ii), since $j(E) \notin J$, $E$ is semi-stable over $\Q$, $N'_l \neq 1$, $E_F$ is semi-stable [cf. Corollary \ref{1-2-cor4}], $l\ge 11$, $F = \Q(\sqrt{-1}, E[12])$ is Galois over $\Q$ [cf. \cite{Diophantine_after_IUT_I}, Proposition 2.1],
by the construction in \cite{Diophantine_after_IUT_I}, Proposition 2.5, there exists a $\mu_6$-initial $\Theta$-data $\mathfrak D = \mathfrak D(E,F,l,\mu_6)$ of type $(l, N, N'_l)$. Then by Corollary \ref{1-2-cor0}, we can see that $\mathfrak D$ admits $\mathfrak R_l^{(1)}$, and $\mathfrak D$ admits $\mathfrak R_l^{(2)}$ when $l\mid N$ [which is equivlent to $l\mid xyz$], hence by Corollary \ref{1-cor: The log-volume of ramification dataset} we have (\ref{4-1-eq1}). Since the log-volume of a ramification dataset is non-negative, we have $\Vol(l) \ge 0$, hence  (\ref{4-1-eq1}) holds for $N'_l = 1$.

For assertion (iii), by using Corollary \ref{1-2-cor1}, (iii), and using Proposition \ref{1-prop: construction of mu_2 initial Theta-data} instead of \cite{Diophantine_after_IUT_I}, Proposition 2.5,
assertion (iii) can be proved similarly to the proof of assertion (ii).

\end{proof}

\begin{lem}  \label{2-1-lem-2}
Let $r,s,t \ge 4$ be positive integers, $(x,y,z)$ be a triple of positive coprime integers such that $\delta_r x^r + \delta_s y^s =  z^t$, where $\delta_r,\delta_s \in \{\pm 1\}$.

Let $S$ be a finite set of prime numbers with cardinality $n = |S| \ge 2$, such that for each $l\in S$, we have $l \ge 11$;
let $2\le k\le S$ be a positive integer, $p_0$ be the smallest prime number in $S$, $k(S)$ be the product of $k$ smallest prime numbers in $S$;
let $n_0 = 2^8$, $N = n_0^{-1}x^{2r}y^{2s}z^{2t}$, $u_0 = 8$, $p_N = 2$, $e_0 = 3$,  $S_1 = \{2\}$, $n_1(S) = 2 p_0$, $n_k(S) = 2 k(S)$;
for each $l\in S$, write $\Vol(l) \defeq \Vol(\mathfrak R_l^{(1)})$, cf. Lemma \ref{2-1-lem-1}, (ii).
Suppose that one of the following situations is satisfied:

\begin{itemize}
\item [(a)] Let $u'_0 \ge u_0$ be a positive integer, $P\subseteq \{p: v_p(N) \ge u'_0 \} \cup \{p: p\nmid N\}$, $n'_1(S) = \max\{u'_0, 2p_0\}$.
In particular, we can let $u'_0 = u_0$, $P = \{p: p\mid N\}$, $n'_0 = n_0$ and $N' = N$.

\item [(b)] Let $u'_0 \ge u_0$ be a positive integer, $P\subseteq \{p: v_p(N) \ge u'_0 \} \cup \{p: p\nmid N\}$, $n'_1(S) = \min\{v_p(N): p\in B'\setminus S_1\}$ [for the definition of $B'$, cf. Definition \ref{1-2-def}, (e)], $n'_0 = 2^8$ if $2\in P$ and $n'_0 = 1$ if $2\notin P$.

\item [(c)] In the situation of (b), suppose that $s\ge r\ge t$. 
If $l\nmid rst$ for each $l\in S$, then we can let $u'_0 = 2t$, $n'_1(S) = u'_0 p_0$, $P=\{p: p\mid xyz\}$, $N' = N$;
if $l\nmid rs$ for each $l\in S$, then we can let $u'_0 = 2r$, $n'_1(S) = u'_0 p_0$, $P=\{p: p\mid xy\}$, $N' = (n'_0)^{-1} x^{2r} y^{2s}$;
if $l\nmid s$ for each $l\in S$, then we can let $u'_0 = 2s$, $n'_1(S) = u'_0 p_0$, $P=\{p: p\mid x\}$, $N' = (n'_0)^{-1} y^{2s}$.

\end{itemize}

Then the assumptions in Definition \ref{1-3-def: basic notation for global inequalities} are satisfied, and we can define $b_1$, $b_2$, $b'_1$. 
Hence if $b_1 \le  1$, $b'_1 < 1$ and $n_k(S)\cdot \log(2) > \frac{b_2}{1-b'_1}$, then by Lemma \ref{1-3-lem: local-global inequalities}, $\log(N')$ cannot belong to the interval $(\frac{b_2}{1-b'_1}, n_k(S)\cdot \log(2) ) .$

\end{lem}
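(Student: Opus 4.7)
The plan is to verify that each clause of Definition \ref{1-3-def: basic notation for global inequalities} is satisfied by the data specified in the statement, so that the conclusion follows directly from Proposition \ref{1-3-prop: local-global inequalities} (ii) applied with $p_N = 2$; the hypothesis $\log(N) < n_k(S)\log(p_N)$ of that proposition is exactly the right endpoint of the forbidden interval in question.

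The key structural input is that $x, y, z$ are pairwise coprime, a consequence of $\gcd(x,y,z) = 1$ together with $\delta_r x^r + \delta_s y^s = z^t$: for each prime $p$, at most one of $x, y, z$ is divisible by $p$. This yields $v_p(n_0 N) = v_p(x^{2r}y^{2s}z^{2t}) = 2 e \cdot v_p(w)$ for some $e \in \{r,s,t\}$ and $w \in \{x,y,z\}$, so whenever $p \mid N$ one has $v_p(n_0 N) \ge 2e \ge 8 = u_0$, verifying clause (a). The same formula shows that $v_p(N)$ is even for every odd $p \mid N$, and that $v_2(N) = v_2(x^{2r}y^{2s}z^{2t}) - 8$ is even as well. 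Since every $l \in S$ is odd and $\ge 11$, any divisibility $l \mid v_p(N)$ upgrades to $2l \mid v_p(N)$; this produces $v_p(N) \ge 2 p_0 = n_1(S)$ for $p \in B \setminus S_1$, and $v_p(N) \ge 2 k(S) = n_k(S)$ whenever $k$ distinct primes of $S$ simultaneously divide $v_p(N)$, verifying clause (b). For clause (c), with $\lambda = 6$ and $\Vol(l) := \Vol(\mathfrak{R}_l^{(1)})$, inequality (\ref{2-1-eq1}) of Lemma \ref{2-1-lem-1} (ii) gives exactly (\ref{1-1-eq1}), noting that $l^{-v_l(N)} N / N_l \le N'_l$ in all cases; the bound $0 < a_4(l) \le a_1(l)$ is routine.

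The optional clause (d) is handled by inspection in each situation. In (a), $P \subseteq \{p : v_p(N) \ge u'_0\} \cup \{p : p \nmid N\}$ directly gives $v_p(n_0 N) \ge u'_0$ for $p \in P$ dividing $N$, while $n'_1(S) = \max\{u'_0, 2 p_0\}$ dominates both the $P$-constraint and the $B$-bound already established. In (b), one tautologically takes $n'_1(S) = \min\{v_p(N) : p \in B' \setminus S_1\}$, which still dominates $n_1(S)$ by the bound shown in clause (b). Situation (c) contains the only nontrivial content: using $v_p(N) = 2 e v_p(w)$ and the hypothesis that every $l \in S$ is coprime to the relevant exponent $e$ (which is exactly what $l \nmid rst$, $l \nmid rs$, or $l \nmid s$ provides in the three subcases, under the ordering $s \ge r \ge t$), any $l \in S$ dividing $v_p(N)$ must actually divide $v_p(w)$; hence $v_p(w) \ge l \ge p_0$, and $v_p(N) \ge 2 e p_0 \ge u'_0 p_0 = n'_1(S)$ for every $p$ in the corresponding subset $P$ of $\{p : p \mid xyz\}$. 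Once (a)--(d) are verified, Proposition \ref{1-3-prop: local-global inequalities} (ii) yields the forbidden interval statement directly. The main thing to keep track of is the parity of $v_p(N)$ (responsible for the factors of $2$ in $n_1(S)$, $n_k(S)$, and $n'_1(S)$) and the identification of the exponent $e \in \{r,s,t\}$ with the correct subset $P$ in the three subcases of (c); all remaining verifications are substitutions.
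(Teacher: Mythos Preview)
Your proposal is correct and follows essentially the same approach as the paper: verify that the data listed satisfy the hypotheses of Definition~\ref{1-3-def: basic notation for global inequalities} (chiefly the lower bounds on $v_p(N)$ needed for $u_0$, $n_1(S)$, $n_k(S)$, $u'_0$, $n'_1(S)$, using parity of $v_p(N)$ and the coprimality $l\nmid e$ in situation~(c)), and then invoke Proposition~\ref{1-3-prop: local-global inequalities}\,(ii). The paper's own proof is terser---it checks only $n_1(S)$ and the second subcase of~(c) as representative examples---but the argument is the same, and your observation that the relevant interval-exclusion result is Proposition~\ref{1-3-prop: local-global inequalities} rather than Lemma~\ref{1-3-lem: local-global inequalities} is in fact a small improvement on the paper's citation.
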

\begin{proof}
To show that the assumptions in Definition \ref{1-3-def: basic notation for global inequalities} are satisfied, we only need to check for the definitions of $u_0, n_1(S), n_k(S), u'_0, n'_1(S)$. 
The check for $u_0, u'_0$ is trivial, and we shall only check for $n_1(S)$, and check for $n'_1(S)$ in the second case of the situation of (c) as an example.

For each $p\in B$, there exists $l\in S$, such that $l\mid v_p(N)$. Since $l\ge 11$, and we have $2\mid v_p(N)$ by the definition of $N$, we have $2l \mid v_p(N)$, hence $v_p(N) \ge 2l \ge 2p_0 = n_1(S)$. Thus $n_1(S)$ is well-defined.

In the second case of the situation of (c), for each $p\in B'$, $p\neq 2$, there exists $l\in S$, such that $l\mid v_p(N)$. 
Since $l\ge 11$, $l\nmid rs$, and $v_p(N)$ is a multiple of $2r$, $2s$ by the definition $P = \{p: p\mid xy\}$ and $N' = (n'_0)^{-1} x^{2r} y^{2s}$, we can see that $v_p(N)$ is a multiple of $2rl$ or $2sl$, hence $v_p(N) \ge 2s \cdot l \ge u'_0 p_0 = n'_1(S)$. Thus $n'_1(S)$ is well-defined.
\end{proof}

\begin{lem} \label{2-1-lem-3}
Let $r,s,t \ge 4$ be positive integers, $(x,y,z)$ be a triple of positive coprime integers such that $\delta_r x^r + \delta_s y^s =  z^t$, where $\delta_r,\delta_s \in \{\pm 1\}$.

Let $S$ be a finite set of prime numbers with cardinality $n = |S| \ge 2$, such that for each $l\in S$, we have $l \ge 11$ and $l\nmid rst$;
let $2\le k\le S$ be a positive integer, $p_0$ be the smallest prime number in $S$, $k(S)$ be the product of $k$ smallest prime numbers in $S$;

let $n_0 = 1$, $N \defeq (x^{2r} y^{2s} z^{2t})_{(2)}$, $8 \le u_0 = 8 \le \min\{2r, 2s, 2t\} $, $p_N = 3$, $e_0 = 1$, $S_1 = \emptyset$, $n_1(S) = u_0 \cdot p_0$, $n_k(S) = u_0 \cdot k(s)$;

Suppose that one of the following situations is satisfied:

\begin{itemize}
\item [(a)] Let $u'_0 \ge u_0$ be a positive integer, $P\subseteq \{p: v_p(N) \ge u'_0 \} \cup \{p: p\nmid N\}$, $n'_1(S) = \max\{u'_0, 2p_0\}$.
In particular, we can let $u'_0 = \min\{3r, s\}$, $P = \{p: p\mid N\}$, $n'_0 = n_0$ and $N' = N$.

\item [(b)] Let $u'_0 \ge u_0$ be a positive integer, $P\subseteq \{p: v_p(N) \ge u'_0 \} \cup \{p: p\nmid N\}$, $n'_1(S) = \min\{v_p(N): p\in B'\setminus S_1\}$ [for the definition of $B'$, cf. Definition \ref{1-2-def}, (e)], $n'_0 = 27$ if $3\in P$ and $n'_0 = 1$ if $2\notin P$.

\item [(c)] In the situation of (b), suppose that $s\ge r\ge t$. 
Then we can let $u'_0 = 2t$, $P = \{p: p\mid N\}$, $N' = N$; 
or let $u'_0 = 2r$, $P = \{p: p\mid xy\}$, $n'_0 = \gcd(2^8, x^{2r} y^{2s})$ and $N' = (n'_0)^{-1} x^{2r} y^{2s}$; 
or let $u'_0 = 2s$, $P = \{p: p\mid y\}$, $n'_0 = \gcd(2^8, y^{2s})$ and $N' \defeq (n'_0)^{-1} x^{2r}$.
\end{itemize}

Then the assumptions in Definition \ref{1-3-def: basic notation for global inequalities} are satisfied, and we can define $b_1$, $b_2$, $b'_1$. 
Hence if $b_1 \le  1$, $b'_1 < 1$ and $n_k(S)\cdot \log(3) > \frac{b_2}{1-b'_1}$, then by Lemma \ref{1-3-lem: local-global inequalities}, $\log(N')$ cannot belong to the interval $(\frac{b_2}{1-b'_1}, n_k(S)\cdot \log(3) ) .$

\end{lem}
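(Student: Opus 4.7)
The plan is to mirror the proof of Lemma~\ref{2-1-lem-2} almost verbatim, with the key substitution that $N=(x^{2r}y^{2s}z^{2t})_{(2)}$ is now coprime to $2$ (hence $p_N=3$) and the volume bound of Definition~\ref{1-3-def: basic notation for global inequalities}~(c) is the $2$-torsion inequality~\eqref{2-1-eq2} of Lemma~\ref{2-1-lem-1}~(iii) rather than the $\mu_6$-version~\eqref{2-1-eq1}. In the ``Optional'' part of Definition~\ref{1-3-def: basic notation for global inequalities}~(c) this corresponds to setting $e_0=1$, which makes $a_1(l)=a_4(l)=\frac{l^2+5l}{l^2+l-12}(1-\frac{1}{l})$ and collapses~\eqref{1-1-eq1} to exactly~\eqref{2-1-eq2}, with $\Vol(l)$ read off from the ramification dataset $\mathfrak{R}_l^{(3)}$ (or $\mathfrak{R}_l^{(4)}$ when $l\mid xyz$). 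Once the parameters are shown to fit Definition~\ref{1-3-def: basic notation for global inequalities}, the desired exclusion interval for $\log(N')$ is precisely Proposition~\ref{1-3-prop: local-global inequalities}~(ii), with $\log(p_N)=\log(3)$ now playing the role of $\log(2)$.

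As in Lemma~\ref{2-1-lem-2}, the only substantive work is checking the divisibility hypotheses of Definition~\ref{1-3-def: basic notation for global inequalities}. The conditions on $u_0$ and $u'_0$ are immediate, since $\min\{2r,2s,2t\}\ge 8=u_0$ and each listed $u'_0$ satisfies $u'_0\ge u_0$. For $n_1(S)$: fix $p\in B$ (noting $S_1=\emptyset$), so $p$ is odd, and by coprimality of $(x,y,z)$ exactly one of $x,y,z$ is divisible by $p$, say $p\mid x$, giving $v_p(N)=2r\,v_p(x)$. By definition of $B$ some $l\in S$ divides $v_p(N)$; since $l\ge 11$ is odd and $l\nmid r$ (from the hypothesis $l\nmid rst$), this forces $l\mid v_p(x)$, whence $v_p(N)\ge 2r\cdot l\ge u_0\cdot p_0=n_1(S)$. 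The same argument applied to $k$ distinct primes $l_1,\dots,l_k\in S$ yields $l_1\cdots l_k\mid v_p(x)$ and $v_p(N)\ge 2r\cdot k(S)\ge u_0\cdot k(S)=n_k(S)$, verifying the constraint attached to $n_k(S)$.

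The three situations (a), (b), (c) are then handled exactly as in Lemma~\ref{2-1-lem-2}. Situation (a) needs no check beyond $n'_1(S)\ge u'_0$; situation (b) makes $n'_1(S)$ true by definition. In situation (c) one repeats the divisibility argument above with the sharper hypotheses $l\nmid rst$, $l\nmid rs$, or $l\nmid s$, which allow one to extract the larger factor $2t\cdot l$, $2r\cdot l$, or $2s\cdot l$ from $v_p(N)$, matching the declared $u'_0 p_0$ in each subcase.

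The main bookkeeping subtlety—and the one place I expect to be mildly delicate—is in case (c), where the factors $n'_0=\gcd(2^8,\cdot)$ appear even though $N$ itself is coprime to $2$. These originate from the normalization of the Frey curve in Lemma~\ref{2-1-lem-1}~(i): they enter $b_2$ only through the additive term $\frac{a_1}{u_0}\log(n_0)$ and play no role in the divisibility checks. Once all conditions of Definition~\ref{1-3-def: basic notation for global inequalities} are verified, $b_1,b_2,b'_1$ are well-defined, the hypotheses $b_1\le 1$, $b'_1<1$, $n_k(S)\log(3)>b_2/(1-b'_1)$ are exactly those of Proposition~\ref{1-3-prop: local-global inequalities}~(ii), and the conclusion follows.
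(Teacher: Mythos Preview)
Your proposal is correct and follows essentially the same approach as the paper's own proof, which consists of the single sentence ``The proof follows from Lemma~\ref{2-1-lem-1} and is elementary, which is similar to the proof of Lemma~\ref{2-1-lem-2}.'' You have in fact supplied more detail than the paper does, correctly identifying that the $2$-torsion inequality~\eqref{2-1-eq2} with $e_0=1$ replaces~\eqref{2-1-eq1}, and carrying out the divisibility checks for $n_1(S)$, $n_k(S)$, and the $n'_1(S)$ in each situation exactly as in Lemma~\ref{2-1-lem-2}.
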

\begin{proof}
The proof follows from Lemma \ref{2-1-lem-1} and is elementary, which is similar to the proof of Lemma \ref{2-1-lem-2}.
\end{proof}

\begin{prop} \label{2-1-prop-bound}
Let $r,s,t \ge 4$ be positive integers, $(x,y,z)$ be a triple of positive coprime integers such that $\delta_r x^r + \delta_s y^s =  z^t$, where $\delta_r,\delta_s \in \{\pm 1\}$.
Assume without loss of generality that $s\ge r\ge t\ge 4$.

(i) Let $s_0 \ge 4$ be a positive integer. 
Suppose that $s_0 \le t$, let $h = \log(x^{r}y^{s}z^{t})$; 
or $s_0 \le r$, let $h = \log(x^{r}y^{s})$; 
or $s_0 \le s$, let $h = \log(y^{s})$;
or for a fixed prime number $p$, let $s_0 = v_p(N)$, $h =  \frac{1}{2} v_p(N) \cdot \log(p)$.

Then we have $h\le 1693$ for $s_0 = 4$;
$h < 864$ for $s_0 = 5$;
$h < 668$ for $s_0 = 6$;
$h < 571$ for $s_0 \ge 7$;
and $h < 427.5$ for $s_0\ge 600$.
Hence $s = \max\{r,s,t\} \le 616$.

(ii) Let $s_0 \ge 4$ be a positive integer. 
Suppose that $s_0 \le t$, let $h = \log( (x^{r}y^{s}z^{t})_{(2)} )$; 
or $s_0 \le r$, let $h = \log( (x^{r}y^{s})_{(2)} )$; 
or $s_0 \le s$, let $h = \log( (y^{s})_{(2)} )$;
or for a fixed prime number $p\neq 2$, let $s_0 = v_p(N)$, $h =  v_p(x^r y^s z^t) \cdot \log(p)$.

Then we have $h\le 807$ for $s_0 = 4$;
$h < 395$ for $s_0 = 5$;
$h < 312$ for $s_0 = 6$;
$h < 268$ for $s_0 \ge 7$.
\end{prop}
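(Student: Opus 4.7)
The plan is to apply the local-to-global elimination mechanism of Proposition \ref{1-3-prop: local-global inequalities} twice: once through the $\mu_6$-initial $\Theta$-data of the Frey-Hellegouarch curve $Y^2 = X(X-a)(X+b)$ provided by Lemma \ref{2-1-lem-1}, (ii), packaged as Lemma \ref{2-1-lem-2}, for part (i); and once through the $2$-torsion initial $\Theta$-data of the same curve from Lemma \ref{2-1-lem-1}, (iii), packaged as Lemma \ref{2-1-lem-3}, for part (ii). The announced numerical constants are then obtained by a finite computer search over admissible parameter choices, in the spirit of Remark \ref{1-3-rmk:  local-global inequalities}.

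For part (i), I would first match the target quantity $2h$ with $\log(N')$ of Definition \ref{1-3-def: basic notation for global inequalities}. In each sub-case of the statement, a specific choice of $u'_0$ and $P$ in Lemma \ref{2-1-lem-2} makes $\log(N') = 2h + O(1)$: the sub-case $s_0\le t$ uses situation (c) with $u'_0 = 2t$ and $P = \{p: p\mid xyz\}$, the sub-cases $s_0\le r$ and $s_0\le s$ use the other $P$-choices in (c), and the fixed-prime sub-case $s_0 = v_p(N)$ falls under situation (a) with $u'_0 = s_0$ and $P = \{p\}\cup\{q: q\nmid N\}$, giving $\log(N') = v_p(N)\log(p) = 2h$ exactly.

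For each target threshold ($s_0 = 4, 5, 6, \ge 7, \ge 600$), I then enumerate by computer finitely many finite sets $S$ of auxiliary primes $l\ge 11$, compute $\Vol(l) = \Vol(\mathfrak R_l^{(1)})$ via Corollary \ref{1-cor: The log-volume of ramification dataset} and the derived constants $a_i, b_1, b'_1, b_2$, and test $b_1 \le 1$, $b'_1 < 1$, $n_k(S)\log(2) > b_2/(1-b'_1)$. Each successful tuple eliminates an open interval of possible values of $\log(N')$ by Proposition \ref{1-3-prop: local-global inequalities}, (ii); chaining finitely many such intervals so that their union covers $(2C, C_0)$—with $2C$ twice the target bound on $h$ and $C_0$ the a priori (possibly enormous) upper bound on $\log(N')$ from analytic number theory, whose explicit construction is omitted per Remark \ref{1-3-rmk:  local-global inequalities}—forces $\log(N') \le 2C$, hence the constants $1693, 864, 668, 571, 427.5$. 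The final assertion $s \le 616$ follows from $s_0 = s \ge 600$, $h = \log(y^s)$, and $y\ge 2$ (Catalan, cf.\ Lemma \ref{2-1-lem-1}, (i)): $s\log 2 \le h < 427.5$ gives $s\le 616$.

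Part (ii) proceeds identically but with Lemma \ref{2-1-lem-1}, (iii), and Lemma \ref{2-1-lem-3} in place of Lemma \ref{2-1-lem-1}, (ii), and Lemma \ref{2-1-lem-2}: the ancillary field shrinks to $F = \Q(\sqrt{-1})$, the $2$-adic part of $N$ is already stripped ($p_N = 3$, $2\notin\V_{\tmod}^{\bad}$), and $e_0 = 1$; these simplifications lower the coefficients $a_1, a_4$ enough to yield the smaller constants $807, 395, 312, 268$. The main obstacle in both parts is purely combinatorial-computational: one must find finite collections of parameter choices whose eliminated intervals genuinely tile the relevant range while keeping $b_1 \le 1$—a delicate but finite enumeration made tractable by the explicit algorithms of Corollary \ref{1-cor: The log-volume of ramification dataset} and Proposition \ref{1-3-prop: local-global inequalities}.
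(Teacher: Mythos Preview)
Your proposal is correct and follows essentially the same approach as the paper: part (i) is derived from Lemma \ref{2-1-lem-2} (the $\mu_6$-data of Lemma \ref{2-1-lem-1}, (ii)) together with the interval-elimination mechanism of Proposition \ref{1-3-prop: local-global inequalities} and a finite computer search as in Remark \ref{1-3-rmk:  local-global inequalities}, and part (ii) from Lemma \ref{2-1-lem-3} (the $2$-torsion data of Lemma \ref{2-1-lem-1}, (iii)) in the same way. Your matching of the sub-cases of $s_0$ with the options (a)--(c) of those lemmas, and your derivation of $s\le 616$ from $y\ge 2$ and $h<427.5$, are exactly the intended arguments.
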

\begin{proof}
For assertion (i), the upper bounds of $h$ follow from Lemma \ref{2-1-lem-2} and the computation in \cite{code_of_zpzhou}, also cf. Remark \ref{1-3-rmk:  local-global inequalities}. 
By Catalan's conjecture, we have $x,y,z \ge 2$.
Note that if $s\ge 600$, then we have $2^s \le h = \log(y^s) \le 427.5$.  
Hence $s \le 427.5 / \log(2) < 617$, thus $s\le 616$.
For assertion (ii), the upper bounds of $h$ follow from Lemma \ref{2-1-lem-3} and the computation in \cite{code_of_zpzhou}.
\end{proof}

\subsection{Structure of solutions}
We shall analyze the structure of the solution $(x,y,z)$ in more detail.

\begin{lem}  \label{2-2-lem-1}
Let $r,s,t \ge 4$ be positive integers, $(x,y,z)$ be a triple of positive coprime integers such that $\delta_r x^r + \delta_s y^s =  z^t$, where $\delta_r,\delta_s \in \{\pm 1\}$.

For each prime number $p\ge 11$, denote [cf. Lemma \ref{2-1-lem-1}]
\small\begin{align*}
a_1(p) &\defeq 3 \cdot \frac{p^2+5p}{p^2+p-12}\cdot (1-\frac{1}{p}),  \\
a_2(p) &\defeq \max\{3 \cdot \Vol(\mathfrak R_p^{(3)})  + a_1(p)\cdot \log(2), 3 \cdot \Vol(\mathfrak R_p^{(4)}) + a_1(p)\cdot \log(2p) \} .
\end{align*}\normalsize
Then $3 < a_1(p) \le 4$, and we have $a_2(11) \le 71$, $a_2(13) \le 74$, $a_2(17) \le 80$, $a_2(19) \le 84$, $a_2(23) \le 91.1$.

Let $l\ge 11$ be a prime number such that $l\nmid r$.
Write uniquely \small\begin{gather}  \label{2-2-eq1}
x = x_1 \cdot 2^{r_2} \cdot l^{r_l} \cdot x_l^{l},
\end{gather}\normalsize
where $x_1, x_l, 2, l$ are pairwise coprime positive integers, $r_2, r_l \ge 0$, $x_l^l = x_{(2l)}^{\lagl}$.

(i) We have 
\small\begin{align}  \label{2-2-eq2}
a_2(l) \ge r \cdot \log(x_1) - a_1(l) \cdot \log\rad(x_1) \ge  (r - 4) \cdot \log(x_1).
\end{align}\normalsize

(ii) we have $x_l \in \{1,3,5\}$ if $(r,l) = (4,11)$;
$x_l \in \{1,3\}$ if $(r,l) = (4,13)$, $(4,17)$, $(5,11)$, $(5,13)$, $(6,11)$; 
and $x_1 = 1$ otherwise.

(iii) We have $r_l \le \frac{37}{r}$. Hence if $r\ge 38$, then we have $r_l = 0$, $x_l = 1$, $x = x_1 \cdot 2^{r_2}$; if $r\ge 69$, then we further have $x_1 = 1$, $x = 2^{r_2}$.

(iv) Suppose that $l\nmid s$, write uniquely $y = y_1 \cdot 2^{s_2} \cdot l^{s_l} \cdot y_l^{l}$, where $y_1, y_l, 2, l$ are pairwise coprime, $s_2, s_l \ge 0$, $y_l^l = y_{(2l)}^{\lagl}$. Then we have $x_l = 1$ or $y_l = 1$.
\end{lem}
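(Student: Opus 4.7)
The plan is to dispose of the elementary claims first, then establish (i) by a careful prime-by-prime reading of \eqref{2-1-eq2}, and finally deduce (ii)--(iv) from (i).

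The bound $3<a_1(p)\le 4$ follows from rewriting $a_1(p)=3(p-1)(p+5)/((p-3)(p+4))$: direct substitution gives $a_1(11)=4$, a monotonicity check (for instance, $a_1(p+1)<a_1(p)$ for $p\ge 11$, by comparing the rational expressions) shows the value strictly decreases, and $\lim_{p\to\infty}a_1(p)=3$. The numerical bounds on $a_2(p)$ for $p\in\{11,13,17,19,23\}$ are a mechanical computation: apply the log-volume algorithm of Corollary \ref{1-cor: The log-volume of ramification dataset} to the ramification datasets $\mathfrak R_p^{(3)}$ and $\mathfrak R_p^{(4)}$ defined in Lemma \ref{2-1-lem-1} (iii), and take the maximum of $3\Vol(\mathfrak R_p^{(3)})+a_1(p)\log(2)$ and $3\Vol(\mathfrak R_p^{(4)})+a_1(p)\log(2p)$.

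For (i), the starting point is \eqref{2-1-eq2}, which in the present normalization becomes $\log(N'_l)\le 2a_1(l)\log\rad(N/N_l)+6\Vol(l)$. Since $l\nmid r$ and, by definition of $x_1$, $l\nmid v_p(x_1)$ for every $p\mid x_1$, the factor $x_1^{2r}$ fully divides $N'_l$. Writing $\log(N'_l)=2r\log(x_1)+X$ and $\log\rad(N/N_l)=\log\rad(x_1)+R$ for non-negative remainders $R,X$, the target $r\log(x_1)-a_1(l)\log\rad(x_1)\le a_2(l)$ reduces to showing $2a_1(l)R-X\le 2a_1(l)\log(2)$ when $l\nmid xyz$, and $\le 2a_1(l)\log(2l)$ otherwise. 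A prime-by-prime inspection yields this: primes $p\mid y$ (respectively $p\mid z$) coprime to $2l$ with $l\nmid v_p(N)$ contribute $(2a_1(l)-2s\,v_p(y))\log(p)\le 0$ (respectively $(2a_1(l)-2t\,v_p(z))\log(p)\le 0$), using $a_1(l)\le 4\le s,t$ and $v_p\ge 1$; the prime $p=2$ contributes at most $2a_1(l)\log(2)$ since $v_2(N)\ge 0$; and the prime $p=l$ can appear only in $R$ (not in $X$, as $N'_l$ is coprime to $l$), contributing at most $2a_1(l)\log(l)$, and only in the case $l\mid xyz$. The companion inequality $\ge(r-4)\log(x_1)$ is immediate from $a_1(l)\le 4$ and $\log\rad(x_1)\le\log(x_1)$.

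Parts (ii)--(iv) are downstream of (i). For (ii), combine (i) with the explicit numerical values of $a_1(l)$ and $a_2(l)$ in the small $(r,l)$ cases listed, enumerating the few compatible $x_1$ coprime to $2l$. For (iii), pick an auxiliary prime $l'\neq l$ with $l'\ge 11$, $l'\nmid r$, and $l'\nmid r_l$; in the $l'$-decomposition of $x$, the factor $l^{r_l}$ now lies in $x'_1$, so (i) applied with $l'$ yields $(r\cdot r_l-a_1(l'))\log(l)\le a_2(l')$, and optimizing the choice of $l'$ gives $r\cdot r_l\le 37$. For (iv), run the symmetric analogue of (i) with $y$ in place of $x$ (which is available because the Fermat equation and the choice of permutation in Lemma \ref{2-1-lem-1} are symmetric in the roles of $x$ and $y$), obtaining a bound on $y_1$; combined with the fact that $x_l^l$ and $y_l^l$ both sit inside $N_l$ and contribute multiplicatively, a direct counting argument shows $x_l>1$ and $y_l>1$ cannot simultaneously hold.

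The main obstacle is the bookkeeping in the prime-by-prime analysis for (i): one must distinguish, for each prime $p$, the four scenarios determined by whether $p\mid N$ and whether $l\mid v_p(N)$, and handle the edge cases $p=2$ and $p=l$ correctly so as to separate the $x_1$-contribution cleanly from the $y,z$-contributions. The argument relies crucially on $r,s,t\ge 4$ together with $a_1(l)\le 4$; if any exponent dropped to $3$, the key cancellation $2a_1(l)-v_p(N)\le 0$ would fail and one would have to pass to a different initial $\Theta$-data or Frey curve, as is done in Sections 3 and 4.
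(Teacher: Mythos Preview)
Your treatment of (i) matches the paper's: both derive the key inequality by reading \eqref{2-1-eq2} prime-by-prime, isolating the $x_1$-contribution and discarding the non-negative $y,z$-contributions via $s,t\ge 4\ge a_1(l)$.

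However, your plan for (ii) has a genuine gap. Inequality (i) bounds $x_1$, not $x_l$; and when $r=4$ the bound $(r-4)\log(x_1)\le a_2(l)$ is vacuous, so ``combining (i) with numerical values and enumerating compatible $x_1$'' cannot yield anything about $x_l$ in the cases $(4,11),(4,13),(4,17)$. The paper's argument is precisely the auxiliary-prime trick you correctly describe for (iii): choose a second prime $q\ge 11$ with $q\neq l$, $q\nmid r$, $q\nmid x_l$, and apply (i) in the $q$-decomposition $x=x'_1\cdot 2^{r_2}\cdot q^{r_q}\cdot x_q^{q}$. The crucial observation is that $x_l^l\mid x'_1$ (one must check $\gcd(x_l,x_q)=1$, which the paper does by contradiction using the a~priori bound of Proposition \ref{2-1-prop-bound}), whence (i) gives $(rl-4)\log(x_l)\le a_2(q)$. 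This is what produces the sharp constraints on $x_l$. Your sketch for (iii) already contains this idea; you need to apply it to (ii) as well, and in both (ii) and (iii) the choice of $q$ requires a bootstrapping step (first bound $x_l$ or $r_l$ crudely via Proposition \ref{2-1-prop-bound} so that only a few primes are excluded, then refine). You also do not address the final clause of (iii), $x_1=1$ for $r\ge 69$, which the paper handles by applying (i) directly with $l\in\{11,13,17\}$ chosen coprime to $r$.
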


\begin{proof}
For assertion (i), note that we have $\log\rad(N / N^{\lagl}) \le \log\rad( N_{(2l)} / N_{(2l)}^{\lagl} ) + \log(2)$ when $l\nmid xyz$, and $\log\rad(N/ N^{\lagl}) \le \log\rad( N_{(2l)} / N_{(2l)}^{\lagl} ) + \log(2l)$ when $l\mid xyz$.
Hence by (\ref{2-1-eq2}) we have 
\small\begin{align}   
\frac{1}{2}\log(N_{(2l)} / N_{(2l)}^{\lagl}) \le a_1(l)\cdot \log\rad( N_{(2l)} / N_{(2l)}^{\lagl} ) + a_2(l) .
\end{align}\normalsize
Since $l\nmid r$, by (\ref{2-2-eq1}) we have $x_1^r \mid N_{(2l)} / N_{(2l)}^{\lagl} $.
Then since $r,s,t \ge 4 \ge a_1(l)$, we have
\small\begin{align*} 
a_2(l) 
&\ge\frac{1}{2}\log(N_{(2l)} / N_{(2l)}^{\lagl}) - a_1(l)\cdot \log\rad( N_{(2l)} / N_{(2l)}^{\lagl} ) \\
&\ge \log(x_1^r)-a_1(l)\cdot \log\rad(x_1) + \sum_{p: p\mid yz, l\nmid v_p(y^s z^t)} (v_p(y^s z^t) -4)\cdot\log(p) \\
& \ge r \cdot \log(x_1) - a_1(l) \cdot \log\rad(x_1) .
\end{align*}\normalsize
This proves assertion (i).

For assertion (ii), let $q$ be the smallest prime number $q\ge 11$, such that $q\neq l$, $q\nmid r$ and $q \nmid x_l$.
Write uniquely $x = x'_1 \cdot 2^{r_2} \cdot q^{r_q} \cdot x_q^{q}$ similar to (\ref{2-2-eq1}), where $x'_1, x_q, 2, q$ are pairwise coprime, $r_2, r_q \ge 0$, $x_q^q = x_{(2q)}^{\lagk{q}}$.
Then by  (\ref{2-2-eq2}) we have $(r - 4) \cdot \log\rad(x'_1) \le a_2(q)$.

Note that we have $\gcd(x_l, 2q) = 1$, and we claim that $\gcd(x_l, x_q) = 1$. 
In fact, suppose that we have $p \mid x_l$, $p\mid x_q$ for some prime $p$, then $p\ge 3$ and $ql \mid v_p(x)$ by definition. 
Hence by Proposition \ref{2-1-prop-bound}, we can take $s_0 \defeq v_p(x) \ge rql > 100$, then $268 > h\defeq v_p(x^r) \cdot \log(p) > r s_0 \cdot \log(3) > 4\cdot 100 \cdot \log(3) > 400$ ---  a contradiction! 
Hence we have $\gcd(x_l, x_q) = 1$.

Since $x_l^l \mid x = x'_1 \cdot 2^{r_2} \cdot q^{r_q} \cdot x_q^{q}$ and $\gcd(x_l, 2 q x_q) = 1$, we can deduce that $x_l^l \mid x'_1$. 
Then by (i), $a_2(q) \ge r\cdot \log(x'_1) - 4\log\rad(x'_1)  \ge (rl-4) \log(x_l)$, thus 
\small\begin{align}  \label{2-2-eq3}
\log(x_l) \le \frac{a_2(q)}{rl-4} .
\end{align}\normalsize

By Proposition \ref{2-1-prop-bound} we have 
$268 \ge \log(x_l^{l r}) = lr \cdot \log(x_l) \ge 11 r \cdot \log(x_l)$,
hence $\log(x_l) \le \frac{24.5}{r}$, $x_l \le e^{24.5 / r}$. 
Consider in the following cases.

(1) In the case of $r\ge 8$, we have $x_l \le  e^{25/r} \le e^{25 / 8} < 30$. Then since $r \le 616$ by Proposition \ref{2-1-prop-bound}, at most four of $11,13,17,19,23$ divide $l$, $r$ or $x_l$. Hence we can deduce that $q\le 23$, and $\log(x_l) \le 92 / (8\cdot 11 - 4) < \log(3)$ by \ref{2-2-eq3}, thus $x_1 < 3$. Then since $x_1$ is coprime to $2$, we have $x_1 = 1$.

(2) In the case of $r \in \{5,6,7\}$, since $x_l \le e^{24.5/r} \le e^{24.5 / 5} < 140 < 11\cdot 13$, we can also deduce that $q\le 17$. Hence by (\ref{2-2-eq3}), we have $\log(x_l) \le 80 / (5 \cdot 11 - 4) < \log(5)$, thus $x_1 < 5$.
Then we can also deduce that $q\le 13$.

When $l = 11$, we can take $q = 13$, and we have $x_l < 74 /  (7 \cdot 11 - 4) < \log(3)$, $x_l = 1$ if $r = 7$; $x_l < 74 /  (5 \cdot 11 - 4) < \log(5)$, $x_l\in\{1,3\}$ if $r \in \{5,6\}$. Similarly, if $l \ge 13$, we can take $q = 11$, and we have $x_l=1$ if $r=6$ or $l\ge 17$; $x_l \in \{1,3\}$ if $r=5$ and $l=13$.

(3) In the case of $r=4$, similar to the proof of case (2), we can first show that $x_l < 10$. Then we can deduce that $q \in \{ 11, 13\}$.
When $l = 11$, we can take $q=13$, hence $\log(x_l) \le \frac{74}{rl-4} < \log(7)$, $x_l \in \{1,3,5\}$. Similarly, if $l \ge 13$, we can take $q = 11$, and we have $x_l=1$ if $l\ge 19$; $x_l \in \{1,3\}$ if $l\in \{13,17\}$.

In conclusion, we have $x_l \in \{1,3,5\}$ if $(r,l) = (4,11)$;
$x_l \in \{1,3\}$ if $(r,l) = (4,13)$, $(4,17)$, $(5,11)$, $(5,13)$, $(6,11)$; 
and $x_1 = 1$ otherwise.

For assertion (iii), let $q$ be the smallest prime number $q\ge 11$, such that $q\neq l$, $q\nmid r$ and $q \nmid r_l$. 
Write uniquely $x = x'_1 \cdot 2^{r_2} \cdot q^{r_q} \cdot x_q^{q}$ similar to (\ref{2-2-eq2}), where $x'_1, x_q, 2, q$ are pairwise coprime, $r_2, r_q \ge 0$, $x_q^q = x_{(2q)}^{\lagk{q}}$.
Then by  (\ref{2-2-eq2}) we have $(rr_l -4)\cdot \log(l) \le r\cdot \log(x'_1) - 4\log\rad(x'_1) \le a_2(q)$, thus 
\small\begin{align}  \label{2-2-eq4}
r_l \le \frac{a_2(q)/\log(l)+4}{r} .
\end{align}\normalsize

Similar to the proof of assertion (ii), when $r\ge 100$, we can show that $q\le 23$, $a_2(q) \le 91.1$, hence by (\ref{2-2-eq4}) we have $r_l < 1$, $r_l = 0$;
when $r < 100$, we can show that $q\le 19$ hence $r_l \le 10$, then we can show that $q\le 17$, $a_2(q) \le 80$, hence by (\ref{2-2-eq4}) we have $r_l \le \frac{80/\log(l)+4}{r} < \frac{38}{r}$, thus $r_l \le \frac{37}{r}$.
Then if $r\ge 38$, we have $r_l = 0$, $x_l = 1$, hence $x = x_1 \cdot 2^{r_2}$.

Suppose that  $r\ge 69$, then $x = x_1 \cdot 2^{r_2}$, and for each prime number $p\ge 11$, $p\nmid r$, by (\ref{2-2-eq2}) we have $\log(x_1) < a_2(p) / (r - 4)$.
If $11\nmid r$, take $p=11$, then we have $\log(x_1) < a_2(11) / (69 - 4) < \log(3)$, hence $x_1 = 1$; 
if $11\mid r$ and $13\nmid r$, then $r\ge 77$, take $p=13$, then we have $\log(x_1) < a_2(13) / (77 - 4) < \log(3)$, hence $x_1 = 1$; 
if $11,13\mid r$, since $r\le 616$, we have $r > 100$, $17\nmid r$, we can take  $p=17$, then $\log(x_1) < a_2(17) / (100 - 4) < \log(3)$, hence $x_1 = 1$.
In summary, we have $x_1 = 1$, $x = 2^{r_2}$ for $r\ge 69$.

assertion (iv) can be proved by a similar approach to the proof of assertion (i).
\end{proof}

\begin{lem}  \label{2-2-lem-2}
Let $r,s,t \ge 4$ be positive integers, $(x,y,z)$ be a triple of positive coprime integers such that $\delta_r x^r + \delta_s y^s =  z^t$, where $\delta_r,\delta_s \in \{\pm 1\}$.

For each prime number $p\ge 17$, write [cf. Lemma \ref{2-1-lem-1}]
\small\begin{gather*}
a_1(p) \defeq 3 \cdot \frac{p^2+5p}{p^2+p-12}\cdot (1-\frac{1}{3p}),  \\
a_2(p) \defeq \max\{3 \cdot \Vol(\mathfrak R_p^{(1)}) + a_1(p)\cdot \log(2), 3 \cdot \Vol(\mathfrak R_p^{(2)}) + a_1(p)\cdot \log(2p) \} .
\end{gather*}\normalsize
Then $3 < a_1(p) < 4$, and we have $a_2(17) \le 156$, $a_2(19) \le 164$, $a_2(23) \le 182$, $a_2(29) \le 210$, $a_2(31) \le 219$, $a_2(37) \le 248$.

Write $r_2 \defeq v_2(x)$.
Let $l$ be a prime number $\ge 17$, such that $l\nmid rst$, $l\nmid (r_2 r - 4)$. 
Write uniquely \small\begin{gather*}  
x = x_1 \cdot 2^{r_2} \cdot l^{r_l} \cdot x_l^{l}, \;
y = y_1 \cdot 2^{s_2} \cdot l^{s_l} \cdot y_l^{l}, \;
z = z_1 \cdot 2^{t_2} \cdot l^{t_l} \cdot z_l^{l}.
\end{gather*}\normalsize
where $x_1, x_l, y_1, y_l, z_1, z_l, 2, l$ are pairwise coprime positive integers, 
$r_2, r_l, s_2, s_l, t_2, t_l \ge 0$, 
$x_l^l = x_{(2l)}^{\lagl}$, $y_l^l = y_{(2l)}^{\lagl}$, $z_l^l = z_{(2l)}^{\lagl}$.

(i) We have 
\small\begin{align}  \label{2-2-eq5}
v_2(x^r) = r_2 r \le ( a_2(l) + 4\log\rad(x_l y_l z_l) ) / \log(2) + 4 .
\end{align}\normalsize

(ii) We have $v_2(x^r) = r_2r \le 306$, hence $r_2 \le \frac{306}{r}$.
Moreover, we have $r \le 303$.
\end{lem}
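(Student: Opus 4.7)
The plan is to apply the $\mu_6$-version of the effective abc inequality (\ref{2-1-eq1}) from Lemma~\ref{2-1-lem-1}(ii), extracting the $2$-adic contribution to $N'_l$ via the arithmetic condition $l \nmid (r_2 r - 4)$.

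\textbf{Part (i).} Since exactly one of $x, y, z$ is even---say $x$, so $r_2 > 0$---we have $v_2(N) = 2(r_2 r - 4)$. The hypothesis $l \nmid (r_2 r - 4)$ with $l$ odd gives $l \nmid v_2(N)$, so $2 \nmid N_l$, yielding $v_2(N'_l) = 2(r_2 r - 4)$ and hence the lower bound $\log(N'_l) \ge 2(r_2 r - 4)\log(2)$. Multiplying (\ref{2-1-eq1}) by $6$ and discarding the non-positive term $-\tfrac{1}{3}(1-\tfrac{1}{l})\log\rad(N_l)$ on the right-hand side yields $\tfrac12 \log(N'_l) \le a_1(l) \log\rad(N) + 3\Vol(l)$. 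Decomposing $\rad(N) \le 2 \cdot l^{\epsilon} \cdot x_1 y_1 z_1 \cdot \rad(x_l y_l z_l)$ with $\epsilon = [l \mid xyz]$, and absorbing the $\log(2)$ or $\log(2l)$ factor via the definition of $a_2(l)$, one obtains
\[
(r_2 r - 4)\log(2) \le a_1(l)\log(x_1 y_1 z_1) + a_1(l)\log\rad(x_l y_l z_l) + a_2(l).
\]
Applying Lemma~\ref{2-2-lem-1}(ii) and its symmetric analogues for $y, z$ (valid since $l \nmid rst$ and $l \ge 17$) forces $x_1 = y_1 = z_1 = 1$ outside the exceptional small cases of Lemma~\ref{2-2-lem-1}(ii), and $a_1(l) \le 4$ yields (\ref{2-2-eq5}). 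The tabulated bounds $a_2(17) \le 156, \dots, a_2(37) \le 248$ follow by direct evaluation of $\Vol(\mathfrak R_p^{(1)}), \Vol(\mathfrak R_p^{(2)})$ via Corollary~\ref{1-cor: The log-volume of ramification dataset}.

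\textbf{Part (ii).} To prove $r_2 r \le 306$, the strategy is to pick $l \in \{17, 19, 23, 29, 31, 37\}$ satisfying the hypotheses of (i) and apply (\ref{2-2-eq5}). Proposition~\ref{2-1-prop-bound}(i) bounds $\max\{r, s, t\} \le 616$, so $rst$ has only a handful of prime factors $\ge 17$; similarly $r_2 r - 4 \le 302$ contributes few obstructions. Hence a valid $l$ always exists in the tabulated range. The factor $\log\rad(x_l y_l z_l)$ is controlled via Lemma~\ref{2-2-lem-1}(iv) (at most one of $x_l, y_l$ is non-trivial in each pair) together with the small-case bounds of Lemma~\ref{2-2-lem-1}(ii). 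Computing $a_2(l)/\log(2) + 4 + 4\log\rad(x_l y_l z_l)/\log(2)$ for each admissible $l$ and taking the minimum yields $r_2 r \le 306$. For the refinement $r \le 303$: the values $r \in \{304, 305, 306\}$ are individually excluded, since for each such $r$ one of the primes $l \in \{17, 19\}$ is admissible (not dividing $rst$ nor $r_2 r - 4$), and the corresponding bound $r_2 r \le a_2(l)/\log(2) + 4 \le 240$ contradicts $r_2 r \ge r \ge 304$.

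\textbf{Main obstacle.} The principal technical difficulty lies in the case analysis for small exponents $r \in \{4, 5, 6, 7\}$ with $l \in \{11, 13, 17\}$, where Lemma~\ref{2-2-lem-1}(ii) permits $x_1 \neq 1$, forcing careful bookkeeping of the extra $a_1(l)\log(x_1 y_1 z_1)$ term. Simultaneously verifying the conditions $l \nmid rst$ and $l \nmid (r_2 r - 4)$ for all admissible $l$ and minimising $a_2(l)$ across these $l$ requires a secondary layer of case analysis, which is naturally delegated to computer verification in the spirit of Remark~\ref{1-3-rmk:  local-global inequalities}.
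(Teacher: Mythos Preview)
Your overall strategy---apply the $\mu_6$-inequality~(\ref{2-1-eq1}), isolate the $2$-adic part of $N'_l$, and then choose $l$ among $\{17,19,23,29,31,37\}$---matches the paper. However there are two genuine gaps.

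\textbf{Gap in Part (i).} After reaching
\[
(r_2 r-4)\log(2)\;\le\;a_1(l)\log\rad(x_1y_1z_1)+a_1(l)\log\rad(x_ly_lz_l)+a_2(l),
\]
you attempt to kill the term $a_1(l)\log\rad(x_1y_1z_1)$ by invoking Lemma~\ref{2-2-lem-1}(ii) to force $x_1=y_1=z_1=1$. But Lemma~\ref{2-2-lem-1}(ii) bounds $x_l$, not $x_1$ (the ``$x_1=1$ otherwise'' in the statement is a typo for ``$x_l=1$ otherwise'', as the proof there makes clear). In general $x_1,y_1,z_1$ can be large---for instance when $r=4$ the bound $(r-4)\log(x_1)\le a_2(l)$ from Lemma~\ref{2-2-lem-1}(i) is vacuous. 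The paper avoids this by \emph{not} discarding the $x_1^{2r}y_1^{2s}z_1^{2t}$ contribution to $N'_l$: one has $\tfrac12\log(N'_l)=(r_2r-4)\log(2)+\log(x_1^{r}y_1^{s}z_1^{t})$, and since $r,s,t\ge4>a_1(l)$ the difference $\log(x_1^{r}y_1^{s}z_1^{t})-a_1(l)\log\rad(x_1y_1z_1)$ is $\ge0$ and may simply be dropped. This yields~(\ref{2-2-eq5}) directly, with no appeal to Lemma~\ref{2-2-lem-1}(ii) at this stage.

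\textbf{Gap in Part (ii), the bound $r\le303$.} Your contradiction ``$r_2r\ge r\ge304$'' tacitly assumes $r_2\ge1$, i.e.\ that $2\mid x$. Nothing forces this. The paper's argument is different: the computer verification gives not only $r_2r\le306$ but also $r_2=0$ whenever $r>303$. Then Lemma~\ref{2-2-lem-1}(iii) (for $r\ge69$ one has $x=2^{r_2}$) yields $x=1$, contradicting Catalan's conjecture which gives $x\ge2$. You need this second ingredient; the inequality~(\ref{2-2-eq5}) alone says nothing when $r_2=0$.
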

\begin{proof}
For assertion (i), since $a_2(l) \ge 0$, the right hand of (\ref{2-2-eq5}) is $\ge 4$, hence (\ref{2-2-eq5}) is valid when $r_2r \le 4$.
Now we shall assume that $r_2r > 4$.
Then since $\gcd(x,y,z) = 1$ and $r_2>0$, we have $s_2 = t_2 = 0$.

When $l\mid xyz$, following the notation in Lemma \ref{2-1-lem-1} and (\ref{2-1-eq1}),
we have $l \nmid v_2(N) = 2(r_2r-4) > 0$, $N_l = (x_l^{2r} y_l^{2s} z_l^{2t})^l$, $N'_l = 2^{-8}x_1^{2r} y_1^{2s} z_1^{2t}$, $\log\rad(N) = \log\rad(2l x_1y_1z_1 x_l y_lz_l)$.
Then by (\ref{2-1-eq1}) we have
\begin{equation*} \small 
    \log(x_1^{r} y_1^{s} z_1^{t}) \le  a_1(l) \cdot \log\rad(N)  + 3\cdot \Vol(\mathfrak R_l^{(2)}) \le a_1(l) \cdot \log\rad(x_1y_1z_1 x_ly_lz_l) + a_2(l) .
\end{equation*}
When $l\nmid xyz$, we have $\log\rad(N) = \log\rad(2 x_1y_1z_1 x_l y_lz_l)$. And we can prove $ \log(x_1^{r} y_1^{s} z_1^{t}) \le a_1(l) \cdot \log\rad(x_1y_1z_1 x_ly_lz_l) + a_2(l)$ similarly.

Then since $a_1(l) < 4$, we have
\small\begin{align*} 
    a_2(l) + 4\log\rad(x_l y_l z_l)  
    &\ge \log(x_1^{r} y_1^{s} z_1^{t}) - a_1(l) \cdot \log\rad(x_1 y_1 z_1) \\
    &\ge \sum_{p} (v_p(x_1^{r} y_1^{s} z_1^{t})-4)\cdot\log(p)
    \ge (r_2r-4) \cdot \log(2) .
\end{align*}\normalsize
This proves assertion (i).

For assertion (ii), recall that at most one of $r,s,t$ equals to $4$. Hence if $l=17$, we have $x_ly_lz_l \in \{1,3\}$; if $l\ge 19$, we have $x_ly_lz_l = 1$.
Note that we have $a_2(17) + 4\log\rad(x_l y_l z_l)  \le a_2(17) + 4\log(3) < 160 < a_2(19)$. 

We shall assume that each of $s,t$ is divided by at most one prime number $p \ge 17$, because if we have $p \mid s$, then we can replace $(y,s)$ by $(y^{s/p},p)$.
Similarly, if $r' \mid r$, $r' \ge 4$, then an upper bound of $v_2(x^r)$ when ``$r=r'$'' is also an upper bound of $v_2(x^r)$.

Since $r\ge 4$, note that by Proposition \ref{2-1-prop-bound}, we have $\log(2^{r_2 r}) \le \log(x^r) \le 1693$, hence $r_2r \le \frac{1693}{\log(2)} <2443$, $r_2r \le 2442$. Moreover, let $q$ be the smallest prime $\ge 17$ such that $q\nmid rst$, $q\nmid (r_2r-4)$, then by (\ref{2-2-eq5}), we have $r_2 r \le (a_2(17)+4\log(3)) / \log(2) + 4$ if $l=17$, and $r_2 r \le a_2(l) / \log(2) + 4$ if $l \ge 19$.
Since there are only finitely many $r,s,t,r_2$, computation in \cite{code_of_zpzhou} shows that we have $r_2 r \le 306$, and we have $r_2 = 0$ if $r > 303$. However, if $r > 303$, then $r_2 = 0$ and we have $x = 1$ by Lemma \ref{2-2-lem-1}. But by Catalan's conjecture we have $x\ge 2$ --- a contradiction!
Hence $r \le 303$. This proves assertion (ii).
\end{proof}

\begin{lem} \label{2-2-lem-3}
Let $r,s,t \ge 4$ be positive integers, $(x,y,z)$ be a triple of positive coprime integers such that $\delta_r x^r + \delta_s y^s =  z^t$, where $\delta_r,\delta_s \in \{\pm 1\}$.

For each prime number $p\ge 11$, write [cf. Lemma \ref{2-2-lem-1}]
\small\begin{gather*}
a_1(p) \defeq 3 \cdot \frac{p^2+5p}{p^2+p-12}\cdot (1-\frac{1}{p}),  \\
a_2(p) \defeq \max\{3 \cdot \Vol(\mathfrak R_p^{(3)})  + a_1(p)\cdot \log(2), 3 \cdot \Vol(\mathfrak R_p^{(4)}) + a_1(p)\cdot \log(2p) \} .
\end{gather*}\normalsize
Then $3 < a_1(p) \le 4$, and we have $a_2(11) \le 71$, $a_2(13) \le 74$, $a_2(17) \le 80$, $a_2(19) \le 84$, $a_2(23) \le 92$.

Let $l$ be a prime number $\ge 11$.
Write uniquely \small\begin{gather}  \label{2-3-eq0}
x = x_1 \cdot 2^{r_2} \cdot l^{r_l} \cdot x_l^{l}, \;
y = y_1 \cdot 2^{s_2} \cdot l^{s_l} \cdot y_l^{l}, \;
z = z_1 \cdot 2^{t_2} \cdot l^{t_l} \cdot z_l^{l}.
\end{gather}\normalsize
where $x_1, x_l, y_1, y_l, z_1, z_l, 2, l$ are pairwise coprime positive integers, 
$r_2, r_l, s_2, s_l, t_2, t_l \ge 0$, 
$x_l^l = x_{(2l)}^{\lagl}$, $y_l^l = y_{(2l)}^{\lagl}$, $z_l^l = z_{(2l)}^{\lagl}$.
Write $r' \defeq r - a_1(l)$,  $s' \defeq s - a_1(l)$,  $t' \defeq r - a_1(l)$.

(i) Suppose that $l \nmid rst$, then we have
\small\begin{align} \label{2-3-eq1}
r'\cdot \log(x_1) + s'\cdot \log(y_1) + t'\cdot \log(z_1) \le a_2(l).
\end{align}\normalsize
Hence \small\begin{align}  \label{2-3-eq3}
(r' + s') \cdot \min\{\log(x_1), \log(y_1) \} + t' \cdot \log(z_1) \le a_2(l).
\end{align}\normalsize

(ii) Suppose that $l \nmid rst$, and assume without loss of generality that $4 \le r \le s \le t$.
Then $t' \ge s' \ge 1$, $r' \ge 0$, and $r' = 0$ if and only if $(r,l) = (4,11)$.
Moreover, when $t' \le r' + s'$, we have
\small\begin{align}  \label{2-3-eq2}
\min\{\log(x_1 y_1), \log(x_1 z_1), \log(y_1 z_1) \} \le  \frac{2a_2(l)}{ r' + s' + t' }.
\end{align}\normalsize

(iii) Suppose that $l \nmid rs$, then we have
\small\begin{align} \label{2-3-eq1a}
r'\cdot \log(x_1) + s'\cdot \log(y_1) \le a_2(l), \quad
\min\{\log(x_1), \log(y_1)\} \le \frac{a_2(l)}{r' + s'}.
\end{align}\normalsize

\end{lem}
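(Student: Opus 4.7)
The plan is to follow the strategy of Lemma \ref{2-1-lem-1}(i) and Lemma \ref{2-2-lem-1}(i): apply the $2$-torsion $abc$-type inequality (\ref{2-1-eq2}) to the elliptic curve $E$ associated to the Fermat triple, but now tracking all three ``trivial parts'' $x_1,y_1,z_1$ in the factorization (\ref{2-3-eq0}) simultaneously rather than only $x_1$. For (i), the hypothesis $l\nmid rst$ ensures that every prime $p\mid x_1 y_1 z_1$ satisfies $l\nmid v_p(N)$, while every prime $p\mid x_l y_l z_l$ has $v_p(N)$ divisible by $l$ automatically. Hence
\[
N'_{2l}\;=\;2^{-8}\cdot x_1^{2r}\,y_1^{2s}\,z_1^{2t},\qquad \rad(N/N_l)\;\le\;2l\cdot\rad(x_1 y_1 z_1),
\]
with the factor of $l$ needed only when $l\mid xyz$. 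Substituting into (\ref{2-1-eq2}), multiplying by $3$, and using the pairwise coprimality of $x_1,y_1,z_1$ (so $\log\rad(x_1 y_1 z_1)\le \log x_1+\log y_1+\log z_1$) yields
\[
r\log(x_1)+s\log(y_1)+t\log(z_1)\;\le\; a_1(l)\bigl(\log x_1+\log y_1+\log z_1\bigr)+a_2(l),
\]
which rearranges to (\ref{2-3-eq1}); the bound (\ref{2-3-eq3}) is then immediate from $(r'+s')\min\{\log x_1,\log y_1\}\le r'\log x_1+s'\log y_1$ together with $t'\log z_1\ge 0$.

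For (ii), the sign assertions $r'\ge 0$ with equality iff $(r,l)=(4,11)$, and $t'\ge s'\ge 1$, follow from the explicit computations $a_1(11)=4$ and $a_1(l)<4$ for $l\ge 13$, combined with the ordering $4\le r\le s\le t$. The substantive step is (\ref{2-3-eq2}). The idea is to pass to the coordinates $A\defeq\log(x_1 y_1)$, $B\defeq\log(x_1 z_1)$, $C\defeq\log(y_1 z_1)$, so that $M=\min(A,B,C)$ and $\log x_1=(A+B-C)/2$, $\log y_1=(A-B+C)/2$, $\log z_1=(-A+B+C)/2$. A direct calculation rewrites the left-hand side of (\ref{2-3-eq1}) as
\[
r'\log(x_1)+s'\log(y_1)+t'\log(z_1)\;=\;\tfrac{1}{2}\bigl[(r'+s'-t')A+(r'+t'-s')B+(-r'+s'+t')C\bigr].
\]
Under the hypothesis $t'\le r'+s'$ all three coefficients on the right are nonnegative (the other two being $\ge 0$ automatically from $r'\le s'\le t'$) and sum to $r'+s'+t'$. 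Since each of $A,B,C$ is at least $M$, the right-hand side is at least $\tfrac12 M(r'+s'+t')$, and combining with (\ref{2-3-eq1}) yields (\ref{2-3-eq2}).

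For (iii), the only new twist is that $l$ may divide $t$. If $l\mid t$, then every prime $p\mid z$ satisfies $l\mid v_p(N)$, so $z$'s contribution falls entirely into $N_l$ and drops out of both $N'_{2l}$ and of the relevant radical; if $l\nmid t$, the argument of (i) already gives a stronger bound. In either case, repeating the proof of (i) with the $z_1$-term simply omitted produces $r'\log(x_1)+s'\log(y_1)\le a_2(l)$, from which $\min\{\log x_1,\log y_1\}\le a_2(l)/(r'+s')$ follows. The main obstacle throughout is the identity in (ii): the hypothesis $t'\le r'+s'$ is exactly what is needed to render the coefficient $(r'+s'-t')$ of $A$ nonnegative, so that the uniform lower bound $A,B,C\ge M$ can be applied on all three terms at once.
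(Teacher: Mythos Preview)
Your argument is correct and follows the same route as the paper: for (i) you feed the factorization (\ref{2-3-eq0}) into the $2$-torsion inequality (\ref{2-1-eq2}) exactly as the paper does, and for (ii) your rewriting of $r'\log x_1+s'\log y_1+t'\log z_1$ as a nonnegative combination of $\log(x_1y_1),\log(x_1z_1),\log(y_1z_1)$ is precisely the paper's computation. Two small points: first, $N'_{2l}$ is by definition coprime to $2$, so the factor $2^{-8}$ should not appear in your displayed formula for it (this is harmless, since only $x_1^{2r}y_1^{2s}z_1^{2t}\mid N'_{2l}$ is needed); second, to conclude $s'\ge 1$ in (ii) you need $s\ge 5$, which does not follow from the ordering $4\le r\le s\le t$ alone---the paper obtains this by invoking that the signature $(2,4,n)$ (hence $(4,4,n)$) is already solved, so one may assume $s\ge 5$.
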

\begin{proof}
assertion (i) is consequence of Lemma \ref{2-1-lem-1}. 
When $l\mid xyz$, following the notation in Lemma \ref{2-1-lem-1},
we have $N_l = (x_l^{2r} y_l^{2s} z_l^{2t})^l$, $N'_l = x_1^{2r} y_1^{2s} z_1^{2t}$, $\log\rad(2^{-8} x^{2r} y^{2s} z^{2t} / N_l) \le \log\rad(2l x_1y_1z_1)$.
Then by (\ref{2-2-eq1}) we have
\small\begin{align*}  
\log(x_1^{r} y_1^{s} z_1^{t}) \le&  a_1(l) \cdot \log\rad(2^{-8} x^{2r} y^{2s} z^{2t} / N_l)  + 3\cdot \Vol(\mathfrak R_l^{(4)}) 
\le a_1(l) \cdot \log\rad(x_1y_1z_1) + a_2(l) .
\end{align*}\normalsize
When $l\nmid xyz$, we have $\log\rad(2^{-8} x^{2r} y^{2s} z^{2t} / N_l) \le \log\rad(2 x_1y_1z_1)$. Then we can prove $ \log(x_1^{r} y_1^{s} z_1^{t}) \le a_1(l) \cdot \log\rad(x_1y_1z_1) + a_2(l)$ similarly.

Hence we have \small\begin{align*}  
a_2(l) &\ge \log(x_1^{r} y_1^{s} z_1^{t}) - a_1(l) \cdot \log(x_1y_1z_1) = r'\cdot \log(x_1) + s'\cdot \log(y_1) + t'\cdot \log(z_1) 
\\  &\ge (r' + s') \cdot \min\{\log(x_1), \log(y_1) \} + t' \cdot \log(z_1) .
\end{align*}\normalsize
This proves assertion (i).

For assertion (ii), since the signature $(2,4,n)$ or its permutation is solved for $n\ge 4$, we have $s,t \ge 5$, $3 < a_1(l) \le 4$, and $a_1(l) = 4$ if and only if $l=11$. Hence $t' \ge s' \ge 1$, $r' \ge 0$, and $r' = 0$ if and only if $(r,l) = (4,11)$. 

When $t' \le r' + s'$, to prove (\ref{2-3-eq2}), let $A \defeq \min\{\log(x_1y_1), \log(x_1z_1), \log(y_1z_1) \}$.
Then by (\ref{2-3-eq1}), we have $2 a_2(l) \ge (r'+s'-t')\log(x_1 y_1) + (r'+t'-s')\log(x_1 z_1) + (s'+t'-r')\log(x_1 y_1) \ge (r'+s'-t')A + (r'+t'-s')A + (s'+t'-r')A = (r'+s'+t')A$, hence $A \le \frac{2 a_2(l)}{r'+s'+t'}$.

The proof of assertion (iii) is similar to that of assertion (i).
\end{proof}

\begin{remark} \label{2-2-rmk-1}
(i) Suppose that $l\nmid rs$ and consider the unique decomposition $x = x_1 \cdot 2^{r_2} \cdot l^{r_l} \cdot x_l^{l}$. 
In Lemma \ref{2-2-lem-1} and Lemma \ref{2-2-lem-2}, we have proved that $r\le 313$, $r_2 \le \frac{306}{r}$, $r_l \le \frac{37}{r}$;  
$x_l \in \{1,3,5\}$ if $(r,l) = (4,11)$,
$x_l \in \{1,3\}$ if $(r,l) = (4,13)$, $(4,17)$, $(5,11)$, $(5,13)$, $(6,11)$,
and $x_1 = 1$ otherwise. 
We have similar results for $y$ [and for $z$ if $l\nmid t$], and we have $x,y,z\ge 2$, $\min\{r_2,s_2,t_2\} = 0$, $\min\{r_l,s_l,t_l\} = 0$, $\min\{x_l,y_l\} = 1$. 
Combined with the upper bounds of $x_1, y_1, z_1$ in Lemma \ref{2-2-lem-3}, we can try to search for all possible positive coprime integers $(x,y,z)$ satisfying  $\delta_r x^r + \delta_s y^s =  z^t$, where $\delta_r,\delta_s \in \{\pm 1\}$.

(ii) To find all non-trivial primitive solutions of $x^r + y^s = z^t$, it suffices to find all possible positive coprime integers $(x,y,z)$ satisfying  $\delta_r x^r + \delta_s y^s =  z^t$, where $\delta_r,\delta_s \in \{\pm 1\}$. 
Note that this equation is ``symmetric'' for $r$, $s$ and $t$, hence we can assume without loss of generality that $4\le r\le s\le t$.
We shall apply the unique decomposition  (\ref{2-3-eq0}) to the possible solution $(x,y,z)$, for prime number $l \ge 11$ chosen below.

When $l\nmid rst$ and any two of $x_1, y_1, z_1$ are given, suppose that $x_1,y_1$ are given as an example, then there are only finitelly many possible $(x,y)$ [cf. the discussion in (i)]. To solve the equation $\delta_r x^r + \delta_s y^s =  z^t$ with given $(x_1, y_1)$, we only need to check whether $|x^r \pm y^s|$ is a $t$-th power for each possible $(x,y,r,s,t)$. We shall call this procedure as check for $(x_1,r, y_1,s,l,t)$.

When $t \ge 70$, we have $z = 2^{t_2}$, where $t_2 \ge 1$ [since $z\ge 2$] and $70\le t \le t_2 t \le 306$  [since $t_2\le \frac{306}{t}$]. If $s\ge 70$, then $y = 2^{s_2}$ is also even., which contradicts to $\gcd(x,y,z)=1$, hence we have $4\le r \le s < 70$.
In the case of $t\ge 70$, when $l\nmid rs$ and  any one of $x_1, y_1$ is given, suppose that $y_1$ is given as an example, then there are only finitelly many possible $(y, z^t)$ [since $z^t = 2^m$, where $70 \le m \le 306$]. To solve the equation $\delta_r x^r + \delta_s y^s =  z^t$ with given $x_1$, we only need to check whether $|y^s \pm 2^m|$ is a $r$-th power for each possible $(y,m,r,s)$. We shall call this procedure as check for $(x_1,r,l,s)$.

(iii) We can divide the search procedure into several parts:
\begin{itemize}
\item [(P1)] For given $4 \le r \le s < 70$ and all possible $t\ge 70$, choose $l\in \{11,13,17,19\}$, $l\nmid rs$. 
Then by (\ref{2-3-eq1a}), we have $\min\{\log(x_1), \log(y_1)\} \le \frac{a_2(l)}{r' + s'}.$ 
Then for $\log(x_1) \le \frac{a_2(l)}{r' + s'}$, check for $(x_1, r, l, s)$;
for $\log(y_1) \le \frac{a_2(l)}{r' + s'}$  $(y_1, s, l, r)$.

\item [(P2)] For given $4\le r\le s\le t < 70$ such that $t \ge r + s - 3$, choose $l\in \{11,13,17,19\}$, $l\nmid rst$. Since $3 < a_1(l) \le 4$, we have $t' \ge r' + s'$.
Then by (\ref{2-3-eq3}), we have $(r' + s') \cdot \min\{\log(x_1), \log(y_1) \} + t' \cdot \log(z_1) \le a_2(l)$. 
Then for $(r' + s') \cdot \log(x_1) + t' \cdot \log(z_1) \le a_2(l)$, check for $(x_1, r, z_1, t, l, s)$;
for $(r' + s') \cdot \log(y_1) + t' \cdot \log(z_1) \le a_2(l)$, check for $(y_1, a, z_1, t, l, r)$.

\item [(P3)] For given $4\le r\le s\le t < 70$ such that $t \le r + s - 4$, choose $l\in \{11,13,17,19\}$, $l\nmid rst$. Since $3 < a_1(l) \le 4$, we have $t' \le r' + s'$.
Then by (\ref{2-3-eq2}), we have $\min\{\log(x_1 y_1), \log(x_1 z_1), \log(y_1 z_1) \} \le  \frac{2a_2(l)}{ r' + s' + t' }$.
Then for $\log(x_1 y_1) \le \frac{2a_2(l)}{ r' + s' + t' }$, check for $(x_1, r, y_1, s, l, t)$;
for $\log(x_1 z_1) \le \frac{2a_2(l)}{ r' + s' + t' }$, check for $(x_1, r, z_1, t, l, s)$;
for $\log(y_1 z_1) \le \frac{2a_2(l)}{ r' + s' + t' }$, check for $(y_1, s, z_1, t, l, r)$.
\end{itemize}

\end{remark}

\begin{prop} \label{2-2-prop}
For any integers $r,s,t \ge 4$, such that $(r,s,t)$ is not a permutation of $(4,5,n), (4,7,n), (5,6,n) $ with $7 \le n \le 301$, the generalized Fermat equation
$$ x^r + y^s= z^t $$
has no non-trivial primitive solution.
\end{prop}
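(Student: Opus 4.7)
The plan is to reduce to the case $4 \le r \le s \le t$ and then use the results already established in Sections 2.1 and 2.2 (Proposition \ref{2-1-prop-bound}, Lemma \ref{2-2-lem-2}, Lemma \ref{2-2-lem-3}, and the search strategy of Remark \ref{2-2-rmk-1}) to reduce the problem to a finite, explicit computer verification. First I would invoke Catalan's conjecture to ensure $x,y,z \ge 2$, and invoke the list of previously solved signatures from the Introduction (in particular $(n,n,n)$, $(n,n,4)$, $(4,4,n)$, and all cases involving an exponent $\le 3$) so that, after permuting, only signatures with $4 \le r \le s \le t$ and none of $r,s,t$ falling into those families remain. Lemma \ref{2-2-lem-2} then gives $r \le 303$, and Proposition \ref{2-1-prop-bound} gives $s \le 616$, so the unsolved range of signatures is finite and explicit.

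Next I would split on the size of $t$. If $t \ge 70$, Lemma \ref{2-2-lem-1}(iii) forces $z = 2^{t_2}$ with $1 \le t_2 \le 306/t$, and coprimality of $(x,y,z)$ forces $s < 70$ as well (otherwise $y$ would also be a power of $2$). In this regime I would run procedure (P1) of Remark \ref{2-2-rmk-1} for every pair $(r,s)$ with $4 \le r \le s < 70$, choosing $l \in \{11,13,17,19\}$ with $l \nmid rs$ to apply Lemma \ref{2-2-lem-3}(iii); this reduces to testing, for each bounded choice of $x_1$ (resp.\ $y_1$) and each allowed $z^t = 2^m$ with $70 \le m \le 306$, whether $|y^s \pm 2^m|$ (resp.\ $|x^r \pm 2^m|$) is a perfect $r$-th (resp.\ $s$-th) power. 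For $t < 70$, I would further split according to whether $t \ge r+s-3$, in which case Lemma \ref{2-2-lem-3}(ii) gives $t' \ge r'+s'$ and procedure (P2) applies via the inequality $(r'+s')\min\{\log x_1,\log y_1\}+t'\log z_1 \le a_2(l)$, or $t \le r+s-4$, in which case procedure (P3) applies via $\min\{\log(x_1y_1),\log(x_1z_1),\log(y_1z_1)\} \le 2a_2(l)/(r'+s'+t')$. In each sub-case I would pick $l \in \{11,13,17,19\}$ with $l \nmid rst$ (the restriction $r \le 303$, $s \le 616$ guarantees such $l$ exists outside a handful of signatures).

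For each signature falling inside one of (P1)--(P3), the finitely many candidate tuples $(x,r,y,s,l,t)$ or analogous tuples are enumerated and tested on a computer (as in \cite{code_of_zpzhou}), producing either no solution or a known Catalan/non-Catalan solution. Combining Lemma \ref{2-2-lem-1}(ii)--(iii) (which restricts $x_l, y_l, z_l$ to a short list and forces $x_1 = 1$ once $r \ge 69$) with Lemma \ref{2-2-lem-2}(ii) (which forces $r \le 303$ and $r_2 \le 306/r$) keeps the search space genuinely finite and small enough to be feasible.

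The main obstacle is ensuring that the bound $a_2(l)/(r'+s')$ (or $2a_2(l)/(r'+s'+t')$) is small enough to give a tractable enumeration: when $(r,s)$ is close to $(4,5)$, $(4,7)$, or $(5,6)$ and all available primes $l \in \{11,13,17,19\}$ divide some of $r,s,t$ or yield $r' = 0$ (the case $(r,l) = (4,11)$ of Lemma \ref{2-2-lem-3}(ii)), the procedure either fails to bound the critical $x_1,y_1,z_1$ or blows up combinatorially. Precisely these borderline triples force the exclusion of the permutations of $(4,5,n)$, $(4,7,n)$, $(5,6,n)$ with $7 \le n \le 303$ in the statement. For every other signature with $4 \le r \le s \le t$, one of (P1), (P2), (P3) with an appropriately chosen prime $l$ produces a small enough enumeration to verify by computer, completing the proof.
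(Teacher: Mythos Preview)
Your proposal is essentially the same as the paper's own proof: reduce to $4\le r\le s\le t$, invoke the finiteness bounds from Proposition~\ref{2-1-prop-bound} and Lemma~\ref{2-2-lem-2}, and then carry out the computer search organized exactly as in Remark~\ref{2-2-rmk-1} via the three procedures (P1)--(P3). The one point you do not address is why the excluded range is $7\le n\le 301$ rather than $7\le n\le 303$: the paper observes that for $(r,s)\in\{(4,5),(4,7),(5,6)\}$ the signatures with $t$ divisible by $2$ or $3$ reduce to already-solved cases (e.g.\ $(2,4,n)$, $(2,n,4)$, $(3,3,n)$, $(3,4,5)$), so $t$ must be coprime to $6$, whence $t\le 301$ and $t\ge 7$.
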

\begin{proof}
It suffices to find all positive coprime integers $(x,y,z)$ satisfying  $\delta_r x^r + \delta_s y^s =  z^t$, where $\delta_r,\delta_s \in \{\pm 1\}$. 
Note that this equation is ``symmetric'' for $r$, $s$ and $t$, hence we can assume without loss of generality that $4\le r\le s\le t$.

Then when $(r,s) \neq (4,5), (4,7), (5,6)$, the computation in \cite{Diophantine_after_IUT_I}, which is based on the discussion in Remark \ref{2-2-rmk-1} shows that such solution $(x,y,z)$ does not exist.

When $(r,s) = (4,5), (4,7), (5,6)$, by Lemma \ref{2-2-lem-2}, we have $t \le 303$. Then since the signatures $(4,5,2), (4,7,2), (5,6,2), (4,5,3), (2,7,3), (5,3,3)$ have benn solved [cf. Introduction], $t$ cannot divided by $2$ or $3$, hence $t \le 301$. We can prove $t\ge 7$ similarly.
\end{proof}

\section{The signatures ($\mathbf{2,3,t}$)}
Let $t\ge 7$ be a positive integer. In this section, we shall consider about the generalized Fermat equation with signature $(2,3,t)$ or its permutations. 
Since the case of $t\in \{6,7,8,9,10,15\}$ have been solved [cf. Introduction], we only need to consider for the case of $t\ge 11$ and $6,7,8,9,10,15 \nmid t$. Then $t$ must be a multiple of some prime number $q \ge 5$.

\subsection{Upper bounds}

\begin{lem}  \label{3-1-lem-1}
Let $t\ge 7$ be a positive integer, $(x,y,z)$ be a triple of positive coprime integers such that $\delta_2 x^2 + \delta_3 y^3 = z^t$, where $\delta_2,\delta_3 \in \{\pm 1\}$.
Write $z_1 \defeq z_{(6)} = 2^{-v_2(z)} 3^{-v_3(z)} z$ for the coprime to $2\cdot 3$ part of $z$.
 
(i) If $z_1 < 19$, then $(\delta_2x^2, \delta_3y^3, z^t)$ belongs to one of the following:
\begin{gather*}
(3^2, -2^3, 1), (71^2, -17^3, 2^7), (13^2, 7^3, 2^9), (-1549034^2, 15613^3, 33^8), (21063928^2, -76271^3, 17^7).
\end{gather*}
(ii) Suppose that $t\ge 11$ and $z \ge 2$. 
Then $t$ is not a multiple of $6,7, 8, 9, 10, 15$, hence $t$ is a multiple of some prime number $q \ge 5$; we have $z _1 \ge 13$, and $z_1$ is not a power of $5$, $7$ or $11$.
\end{lem}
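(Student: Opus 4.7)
My plan is to establish (i) by a case analysis on the finitely many admissible values of $z_1$, and then to derive (ii) from (i) together with the solved signatures recalled in the Introduction.

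For (i), since $z_1 = z_{(6)}$ is coprime to $6$, the hypothesis $z_1 < 19$ forces $z_1 \in \{1, 5, 7, 11, 13, 17\}$. In each case the prime support of $z$ is contained in $\{2,3\}$ or $\{2,3,p\}$ where $p = z_1$, so $\delta_2 x^2 + \delta_3 y^3 = z^t$ becomes an $S$-integer problem on a Mordell-type curve $Y^2 = X^3 \pm z^t$ for a set $S$ of at most three primes. The case $z = 1$ is exactly Mih\u{a}ilescu's theorem (Catalan). For $z \ge 2$ I would split on whether $t$ shares a factor with one of the fully resolved exponents $\{7, 8, 9, 10, 15\}$: if $t = d m$ with $d$ in this set, the solution descends to a primitive solution of signature $(2,3,d)$ via $w \defeq z^m$, and the cited classifications leave finitely many candidates that can be checked by hand; for the residual $t$ not sharing such a factor, one invokes the classical $S$-integer bounds on the Mordell curve for the small $S$ in play. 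Compiling yields precisely the five listed tuples.

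For (ii), I first rule out each divisor $d \in \{6, 7, 8, 9, 10, 15\}$ of $t$. Writing $t = dm$ and $w \defeq z^m$, the equation becomes $\delta_2 x^2 + \delta_3 y^3 = w^d$, a primitive solution to $(2, 3, d)$. The referenced classifications enumerate all such primitive solutions; inspecting each entry ($z$-components $17, 65, 113$ for $d=7$; $33, 43$ for $d=8$; $2$ for $d=9$; only $1$ for $d \in \{6,10,15\}$), the $w$-component is either $1$ or an integer that fails to be a perfect $m$-th power for any $m \ge 2$, forcing $w = 1$, hence $z = 1$, contradicting $z \ge 2$. Next, were $t$ to have no prime factor $\ge 5$ then $t = 2^a 3^b$; a short inspection of such integers $\ge 11$ avoiding the forbidden divisibilities leaves no admissible $t$.

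Finally I apply (i) to the structural conclusion on $z_1$. Under $t \ge 11$ and $z \ge 2$, every entry of the five-element list is either Catalan (so $z = 1$) or has $t \in \{7, 8, 9\}$, hence none satisfies the present hypotheses; therefore $z_1 \ge 19$, and in particular $z_1 \ge 13$. To exclude $z_1$ being a pure prime power $p^k$ with $p \in \{5, 7, 11\}$ and $k \ge 2$ (the case $k = 1$ already follows), I use the prime factor $q \ge 5$ of $t$ just established: then $z^t$ is a perfect $q$-th power with radical supported on $\{2, 3, p\}$, and the resulting very restricted primitive solution to $(2, 3, q)$, together with the large $p$-adic valuation $v_p(z^t) = kt \ge 2q$, is incompatible with the conductor and torsion structure enforced by the known classifications. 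I expect the main obstacle to be the completeness of the case analysis for (i) when $t$ does not share a factor with any fully resolved exponent, where the $S$-integer calculation on the small Mordell curves must be carried out explicitly; a secondary delicate point is the exclusion of $z_1 = p^k$ with $k \ge 2$, which does not follow from (i) alone and requires exploiting the $q$-th power structure of $z^t$ that part (ii) itself establishes.
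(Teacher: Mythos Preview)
Your approach to (i) has a real gap: you cast the equation as a point on $Y^2 = X^3 \pm z^t$ and propose to compute $S$-integral points with $S \subseteq \{2,3,p\}$, but this is not a finite computation, because the curve itself depends on the unknown $z^t$. For the ``residual'' values of $t$ (e.g.\ $t=11,13$) your descent to a solved signature is unavailable, and you have given no way to reduce to finitely many curves. The paper's device is to reduce modulo sixth powers: writing $z=2^{t_2}3^{t_3}z_1$ and letting $r,r_2,r_3$ be the residues of $t,\,t_2t,\,t_3t$ modulo $6$, one divides $x^2=-\delta_2\delta_3 y^3+\delta_2 z^t$ through by the sixth power $(2^{k_2}3^{k_3}z_1^k)^6$ and obtains an $S$-integral point (with $S$ the primes dividing $6z_1$) on the Mordell curve $Y^2=X^3+\delta_2\, 2^{r_2}3^{r_3}z_1^{\,r}$. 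For each $z_1\in\{1,5,7,11,13,17\}$ this is a \emph{finite} list of curves, indexed by $\delta_2$ and $r,r_2,r_3\in\{0,\dots,5\}$, and a Magma search for their $S$-integral points finishes (i) uniformly in $t$ with no case split on the exponent.

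The same sixth-power reduction also dissolves your acknowledged difficulty in (ii). If $z_1=p^k$ with $p\in\{5,7,11\}$, reduce the $p$-adic exponent of $z^t$ modulo $6$ as well: one lands on the very same family $Y^2=X^3+\delta_2\,2^{r_2}3^{r_3}p^{r_p}$ with the same $S=\{2,3,p\}$ already exhausted in the proof of (i). Thus the $S$-integral search carried out for $z_1=p$ automatically rules out $z_1=p^k$ for every $k\ge 1$; no separate ``conductor and torsion'' argument is needed, and the sketch you offer for that case is not a proof as it stands. With this in hand, (ii) is immediate: none of the five listed solutions has both $t\ge 11$ and $z\ge 2$, and the solved signatures $(2,3,d)$ for $d\in\{6,7,8,9,10,15\}$ exclude those divisibilities of $t$.
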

\begin{proof}
For assertion (i), write $t_2 \defeq v_2(z)$, $t_3 \defeq v_3(z)$; write $r$ (resp. $r_2$; $r_3$) for the residue of $t$ (resp. $t_2 t$; $t_3 t$) modulo $6$; let $k \ge 0$ (resp. $k_2$; $k_3$) be a integer such that $6k = t - r$ (resp. $6 k_2 = t_2 t - r_2$;  $6 k_3 = t_3 t - r_3$). 
Then we have 
$$(2^{-3 k_2} 3^{-3 k_3} z_1^{-3 k} x)^2 = (-\delta_2\delta_3 2^{-2 k_2} 3^{-2 k_3} z_1^{-2 k} y)^3 + \delta_2 2^{r_2} 3^{r_3} z_1^r .$$
Hence $P = (-\delta_2\delta_3 2^{-2 k_2} 3^{-2 k_3} z_1^{-2 k} y, 2^{-3 k_2} 3^{-3 k_3} z_1^{-3 k} x)$ is a rational point on the elliptic curve $Y^2 = X^3 + \delta_2 2^{r_2} z_1^r$, whose denominators of coordinates are divided by $z_1$, and are divided by $2$ (resp. $3$) when $r_2 > 0$ (resp. $r_3 > 0$).
When $z_1 < 19$, since $\gcd(6, z_1) = 1$, we have $z_1 \in \{1,5,7,11,13,17\}$.

When $r > 0$, for $\delta_2 \in \{\pm 1\}$, $z_1 \in \{1,5,7\}$, $0\le r_2, r_3 \le 5$, $1\le r\le 5$, let $S$ be the set of all prime divisors of $6z_1$. 
Using Magma \cite{Magma}, we can find all the possible $P$ by computing the $S$-integral points on the elliptic curve $Y^2 = X^3 + \delta_2 2^{r_2} 3^{r_3} z_1^r$,
and then choosing the $S$-integral points whose denominators of coordinates are divided by $z_1$, and are divided by $2$ (resp. $3$) when $r_2 > 0$ (resp. $r_3 > 0$).
These possible $S$-integral points imply that $(\delta_2x^2, \delta_3y^3, z^t) = (3^2, -2^3, 1)$, $(71^2, -17^3, 2^7)$, $(13^2, 7^3, 2^9)$, $(-1549034^2, 15613^3, 33^8)$, $(21063928^2, -76271^3, 17^7)$. 
For the Magma codes, cf. \cite{code_of_zpzhou}.

When $r = 0$, we have $6\mid t$. Then since the only positive coprime solution to $\pm x^2 \pm y^3 = (z^{t/6})^6 = (z')^6$ is the Catalan solution $3^2-2^3=1^6$, we can deduce that $z^{t/6} = 1$, hence $(\delta_2x^2, \delta_3y^3, z^t) = (3^2, -2^3, 1)$.
This proves assertion (i).

Since all the solutions in (i) have $z=1$ or $t < 11$, and the generalized Fermat equation has been solved for the signature $(2,3,n)$, where $n\in \{6,7,8,9,10,15\}$ [cf. Introduction]. Assertion (ii) follows directly from these observations.
\end{proof}

\begin{lem}  \label{3-1-lem-2}
Let $t\ge 11$ be a positive integer, $(x,y,z)$ be a triple of positive coprime integers such that $z\ge 2$ and $\delta_2 x^2 + \delta_3 y^3 = z^t$, where $\delta_2,\delta_3 \in \{\pm 1\}$. Let $l\ge 11$ be a prime number such that $l\neq 13$.

(i) Let $(a,b,c) = (x, \delta_2\delta_3 y, \delta_2 z^t)$, then $a^2+b^3 = c$, $\gcd(a,b,c) = 1$.
Let $E$ be the elliptic curve defined over $\Q$ by the equation 
$$Y^2 = X^3 + 3bX + 2a,$$
then the $j$-invariant
$$j(E) = \frac{ 1728\delta_3 y^3 }{ z^t } \notin J \defeq \{0,2^6\cdot 3^3,2^2\cdot 73^3\cdot 3^{-4},2^{14}\cdot 31^3\cdot 5^{-3}\} .$$
Write 
$$N \defeq z^t / \gcd(1728, z^t), \; N_l \defeq N^{\lagl}, \; N'_l \defeq (N/N_l)_{(l)}, \;  N'_{2l} \defeq (N/N_l)_{(2l)}. $$
Then $x,y,z \ge 2$, $N$ is the denominator of $j(E)$ and $N$ is not a power of $2$.

(ii) Write $F\defeq \Q(E[12])$,
then when $N'_l \neq 1$, by the construction in \cite{Diophantine_after_IUT_I}, Proposition 2.5, there exists a $\mu_6$-initial $\Theta$-data $\mathfrak D = \mathfrak D(E,F,l,\mu_6)$ of type $(l, N, N'_l)$.

Let $\mathfrak{R}_l^{(1)}$ be the ramification dataset [cf. \cite{Diophantine_after_IUT_I}, Definition 1.12] consists of the following data:
\begin{gather*}
l_0 = l, \;  e_0 = 12, \; S_0=\{2,3,l\}, \; S_{\gen}^{\multi} = \{e: e\mid 12 l \},  \;
S_2^{\good} = \{e: e\mid 2^8\cdot 3^2, 2\mid e\}, \\ 
S_3^{\good} = \{e: e\mid 2^6\cdot 3^2, 2\mid e\}, \;
S_l^{\good} = \{l-1, l(l-1), l^2-1\}, \\
S_2^{\multi} = \{e: e\mid 24l, 2\mid e\}, \; S_3^{\multi} = \{e\mid 48l, 2\mid e\}, \;
S_l^{\multi} = \{(l-1)\cdot e: e\mid 12l\} .
\end{gather*}
Then $\mathfrak D$ admits $\mathfrak R_l^{(1)}$.
Hence we have 
\begin{equation} \small \label{3-1-eq1}
    \frac{1}{6}\log(N'_l) \le  \frac{l^2+5l}{l^2+l-12}\cdot \big( (1-\frac{1}{12  l})\cdot \log\rad(N) - \frac{1}{12}(1- \frac{1}{l}) \log\rad(N_l)  \big) + \Vol(\mathfrak R_l^{(1)}).
\end{equation}
The above inequality holds for $N'_l = 1$.

(iii) By Lemma \ref{3-1-lem-1}, we can choose a prime number $q\ge 5$, such that $q\mid t$. 
Write $F\defeq \Q(\sqrt{-1}, E[2q])$, and suppose that $l\nmid q(q^2-1)$, 
then when $N'_{2l} \neq 1$, by the construction in Proposition \ref{1-prop: construction of mu_2 initial Theta-data}, there exists a $2$-torsion initial $\Theta$-data $\mathfrak D = \mathfrak D(E,F,l,2\text{-tor})$ of type $(l, N, N'_{2l})$.

Let $\mathfrak{R}_{l,q}^{(2)}$ be the ramification dataset [cf. \cite{Diophantine_after_IUT_I}, Definition 1.12] consists of the following data:
\begin{gather*}
l_0 = l, \;  e_0 = 2, \; S_0=\{2,3,q,l\}, \; S_{\gen}^{\multi} = \{1,2,l,2l\}, \\
S_2^{\good} = \{e: e\mid 2^5\cdot 3^2, 2\mid e\}, \;
S_3^{\good} = \{1\}, \;
S_q^{\good} = \{q-1, q(q-1), q^2-1\}, \\
S_l^{\good} = \{l-1, l(l-1), l^2-1\}, \;
S_2^{\multi} = \{e: e\mid 8ql, 2\mid e\}, \; S_3^{\multi} = \{e\mid 2ql, 2\mid e\}, \\
S_q^{\multi} = \{(q-1)\cdot e: e\mid 2l\}, \; S_l^{\multi} = \{(l-1)\cdot e: e\mid 2l\} .
\end{gather*}
Then $\mathfrak D$ admits $\mathfrak R_{l,q}^{(2)}$.
Hence we have 
\begin{equation} \small \label{3-1-eq2}
    \frac{1}{6}\log(N'_{2l}) \le  \frac{l^2+5l}{l^2+l-12}\cdot \big( (1-\frac{1}{2  l})\cdot \log\rad(N) - \frac{1}{2}(1- \frac{1}{l}) \log\rad(N_l)  \big) + \Vol(\mathfrak R_l^{(2)}).
\end{equation}
The above inequality holds for $N'_{2l} = 1$.

\end{lem}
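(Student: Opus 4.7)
The plan is to prove the three assertions in order, leveraging the arithmetic results of Section 1 (Corollary \ref{1-2-cor2} on the curve $Y^2 = X^3 + 3bX + 2a$, Corollary \ref{1-cor: The log-volume of ramification dataset} on log-volumes, and Proposition \ref{1-prop: construction of mu_2 initial Theta-data} on $2$-torsion $\Theta$-data constructions) together with the enumeration in Lemma \ref{3-1-lem-1}.

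For (i), I would apply Corollary \ref{1-2-cor2} directly to the triple $(a,b,c) = (x, \delta_2\delta_3 y, \delta_2 z^t)$, which satisfies $a^2+b^3=c$ and is coprime by hypothesis. This gives the stated formulas for $j(E)$ and for the denominator $N = |c|/\gcd(1728,c) = z^t/\gcd(1728,z^t)$. To rule out $j(E) \in J$, note that the denominators of the nonzero elements of $J$ lie in $\{1, 3^4, 5^3\}$, so $j(E) \in J \setminus \{0\}$ would force $N$ (hence $z$) to be supported on $\{2,3,5\}$, making $z_1$ either $1$ or a power of $5$; both are excluded by Lemma \ref{3-1-lem-1}(ii). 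The case $j(E)=0$ forces $y=0$, contradicting primitivity. For $N$ not being a power of $2$: Lemma \ref{3-1-lem-1}(ii) gives $z_1 \ge 13$, so $z$ has a prime divisor $p \ge 5$, and since $p \nmid 1728$, $p$ survives in $N$. Finally, $x,y \ge 2$ follows from Catalan's conjecture applied to $\delta_2 x^2 + \delta_3 y^3 = z^t$: either $x=1$ or $y=1$ reduces to a Mih\u{a}ilescu-type equation incompatible with $t\ge 11$ and $z\ge 2$.

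For (ii), I would invoke \cite{Diophantine_after_IUT_I}, Proposition 2.5 with $F = \Q(E[12])$ to construct the $\mu_6$-initial $\Theta$-data $\mathfrak{D}$. The required hypotheses are satisfied: $E$ is semi-stable at every $p \ne 2,3$ by Corollary \ref{1-2-cor2}(i); $j(E) \notin J$ and $N$ not a power of $2$ from (i); $l\ge 11$ and $l\ne 13$ by assumption; $N'_l \ne 1$ by assumption. Then Corollary \ref{1-2-cor0} computes $e_p(E,F,l)$ for every $p$, and the resulting divisibilities/memberships match exactly the sets $S_2^{\good}, S_2^{\multi}, S_3^{\good}, S_3^{\multi}, S_l^{\good}, S_l^{\multi}, S_{\gen}^{\multi}$ defining $\mathfrak{R}_l^{(1)}$. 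Hence $\mathfrak{D}$ admits $\mathfrak{R}_l^{(1)}$, and Corollary \ref{1-cor: The log-volume of ramification dataset} with $e_0 = 12$ delivers the inequality (\ref{3-1-eq1}). The case $N'_l = 1$ is trivial since the LHS vanishes and $\Vol(\mathfrak{R}_l^{(1)}) \ge 0$.

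For (iii), Lemma \ref{3-1-lem-1}(ii) lets me pick a prime $q\ge 5$ with $q \mid t$; I set $F = \Q(\sqrt{-1}, E[2q])$ and apply Proposition \ref{1-prop: construction of mu_2 initial Theta-data}. The conditions to verify are: $\sqrt{-1}\in F$ and $F(E[2])=F$ by construction; $E_F$ semi-stable by Lemma \ref{1-2-lem: basic properties}(iii) since $2q\ge 10$ and $E$ is semi-stable outside $\{2,3\}$; $F\subseteq \Q(E[4q])$ (using $\sqrt{-1}\in\Q(\mu_4)\subseteq\Q(E[4])$), with $l\nmid 4q$ as $l\ge 11$ and $l\ne q$ (from $l\nmid q(q^2-1)$); and $l\nmid [F:\Q]$, since $[F:\Q]$ divides $|\GL_2(\Z/4q\Z)|$, whose prime factors lie in $\{2,3,q,q-1,q+1\}$, all coprime to $l$ under the same hypothesis. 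Conditions (b), (c), (d) of Proposition \ref{1-prop: construction of mu_2 initial Theta-data} follow from (i) and the assumptions. Applying Corollary \ref{1-2-cor2}(ii) with this specific $F$ (where $q$ plays the role of the auxiliary prime) produces ramification indices lying in the sets of $\mathfrak{R}_{l,q}^{(2)}$, so $\mathfrak{D}$ admits $\mathfrak{R}_{l,q}^{(2)}$. Corollary \ref{1-cor: The log-volume of ramification dataset} with $e_0 = 2$ then yields (\ref{3-1-eq2}). The main obstacle I anticipate is the careful bookkeeping for (iii): ensuring that the ramification at $\{2, 3, q, l\}$ and at the primes of multiplicative reduction really fits within the prescribed sets, and that the condition $l\nmid q(q^2-1)$ is strong enough to keep $l$ out of both $[F:\Q]$ and the $l$-part of the ramification structure.
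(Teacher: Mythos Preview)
Your proposal is correct and follows essentially the same approach as the paper: both rely on Corollary~\ref{1-2-cor2} and Lemma~\ref{3-1-lem-1} for (i), on \cite{Diophantine_after_IUT_I}, Proposition~2.5 together with Corollary~\ref{1-2-cor0} and Corollary~\ref{1-cor: The log-volume of ramification dataset} for (ii), and on Proposition~\ref{1-prop: construction of mu_2 initial Theta-data} with Corollary~\ref{1-2-cor2}(ii) for (iii). Your write-up is in fact more explicit than the paper's (e.g., spelling out why $l\nmid [F:\Q]$ in (iii) via $[F:\Q]\mid \#\GL_2(\Z/4q\Z)$, and separately treating each element of $J$ in (i)), but the underlying argument is the same.
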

\begin{proof}
For assertion (i), $j(E) = \frac{ 1728\delta_3 y^3 }{ z^t }$ follows from Corollary \ref{1-2-cor2}, (i). Then since $[z]_2 \ge 13$ by Lemma \ref{3-1-lem-1}, we can see that $j(E) \not\in J$, and $N$ is not a power of $2$.

For assertion (ii), since $j(E) \notin J$, $N$ is not a power of $2$, $N'_l \neq 1$, $E_F$ is semi-stable [cf. Corollary \ref{1-2-cor0}], $l\ge 11$ and $l\neq 13$, $F = \Q(E[12])$ is Galois over $\Q$ [cf. \cite{Diophantine_after_IUT_I}, Proposition 2.1],
by the construction in \cite{Diophantine_after_IUT_I}, Proposition 2.5, there exists a $\mu_6$-initial $\Theta$-data $\mathfrak D = \mathfrak D(E,F,l,\mu_6)$ of type $(l, N, N'_l)$. Then by Corollary \ref{1-2-cor0}, we can see that $\mathfrak D$ admits $\mathfrak R_l^{(1)}$, hence by Corollary \ref{1-cor: The log-volume of ramification dataset} we have (\ref{3-1-eq1}). Since the log-volume of a ramification dataset is non-negative, we have $\Vol(\mathfrak R_l^{(1)}) \ge 0$, hence  (\ref{3-1-eq1}) holds for $N'_l = 1$.

For assertion (iii), by using Corollary \ref{1-2-cor2} instead of Corollary \ref{1-2-cor0}, and using Proposition \ref{1-prop: construction of mu_2 initial Theta-data} instead of \cite{Diophantine_after_IUT_I}, Proposition 2.5,
assertion (iii) can be proved similarly to the proof of assertion (ii).

\end{proof}

\begin{lem}  \label{3-1-lem-3}
Let $t\ge 11$ be a positive integer, $(x,y,z)$ be a triple of positive coprime integers such that $z\ge 2$ and $\delta_2 x^2 + \delta_3 y^3 = z^t$, where $\delta_2,\delta_3 \in \{\pm 1\}$.

Let $S$ be a finite set of prime numbers with cardinality $n = |S| \ge 2$, such that for each $l\in S$, we have $l \ge 11$, $l\neq 13$;
let $2\le k\le S$ be a positive integer, $p_0$ be the smallest prime number in $S$, $k(S)$ be the product of $k$ smallest prime numbers in $S$;
let $n_0 = \gcd(1728, z^t)$, $N = n_0^{-1} z^t$;

for each $l\in S$, write $\Vol(l)\defeq \Vol(\mathfrak R_l^{(1)})$ [cf. Lemma \ref{3-1-lem-2}]; let $e_0 = 12$, $S_1 = \{2,3\}$,  $p_B = 2$, $n_k(S)$ be the product of the smallest $k$ prime numbers in $S$.

Let $S$ be a finite set of prime numbers with cardinality $n = |S| \ge 2$, such that for each $l\in S$, we have $l \ge 11$;
let $2\le k\le S$ be a positive integer, $p_0$ be the smallest prime number in $S$, $k(S)$ be the product of $k$ smallest prime numbers in $S$;
let $n_0 = 2^8$, $N = n_0^{-1}x^{2r}y^{2s}z^{2t}$, $u_0 = 8$, $p_N = 2$, $e_0 = 3$,  $S_1 = \{2\}$, $n_1(S) = 2 p_0$, $n_k(S) = 2 k(S)$;
for each $l\in S$, write $\Vol(l) \defeq \Vol(\mathfrak R_l^{(1)})$, cf. Lemma \ref{2-1-lem-1}, (ii).
Suppose that one of the following situations is satisfied:

\begin{itemize}
\item[(i)] Let $u_0 \ge 11$ be a positive integer, such that $u_0\le t$;
let $n_1(S) = u_0$.

\item[(ii)] Suppose that for each $l\in S$, we have $l\nmid t$.
Let $u_0 = t$, $n_1(S) = p_0 t$.
\end{itemize}

Then with the notation in (i) or (ii), the assumptions in Definition \ref{1-3-def: basic notation for global inequalities} are satisfied, and we can define $b_1$, $b_2$.
Note that with the notation in (i), the definitions of $b_1$, $b_2$ are independent to $t\ge u_0$.
Hence if we have $b_1 <  1$ and $n_k(S)\cdot \log(2) > \frac{b_2}{1-b_1}$, then by Lemma \ref{1-3-lem: local-global inequalities}, $\log(N)$ cannot belong to the interval $(\frac{b_2}{1-b_1}, n_k(S)\cdot \log(2) ) .$
\end{lem}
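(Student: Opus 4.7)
The plan is to verify, case by case, that the data supplied in situations (i) and (ii) satisfy all the hypotheses of Definition \ref{1-3-def: basic notation for global inequalities}, after which the interval-exclusion statement drops out of Proposition \ref{1-3-prop: local-global inequalities}(i). There are three blocks of hypotheses to check: the arithmetic conditions on $(n_0, N, u_0, p_N)$, the combinatorial conditions on $(S, k, S_1, n_1(S), n_k(S))$, and the log-volume inequality (\ref{1-1-eq1}) for each $l \in S$.

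I would handle the arithmetic block first. Since $N = n_0^{-1} z^t$ with $n_0 = \gcd(1728, z^t)$, the support of $N$ equals the support of $z$, so for every $p \mid N$ one has $v_p(n_0 N) = v_p(z^t) = t \cdot v_p(z) \ge t$. In situation (i) this is $\ge u_0$ because $u_0 \le t$, and in situation (ii) because $u_0 = t$. The condition $p \ge p_N = 2$ is trivial.

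Next I would verify the combinatorial block. For $n_1(S)$: suppose $p \in B \setminus S_1 = B \setminus \{2,3\}$ and $l \in S$ satisfies $l \mid v_p(N)$. In situation (i) I simply use $v_p(N) \ge t \ge u_0 = n_1(S)$. In situation (ii), the hypothesis $l \nmid t$ combined with $l \mid v_p(N) = t \cdot v_p(z)$ forces $l \mid v_p(z)$, whence $v_p(z) \ge l \ge p_0$ and $v_p(N) \ge p_0 \cdot t = n_1(S)$. For $n_k(S)$: if $l_1, \dots, l_k \in S$ are distinct primes each dividing $v_p(N)$, then by pairwise coprimality their product divides $v_p(N)$ as well, so $v_p(N) \ge l_1 \cdots l_k \ge k(S) = n_k(S)$.

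For the log-volume inequality (\ref{1-1-eq1}) I would quote Lemma \ref{3-1-lem-2}(ii) directly: the constraints $l \ge 11$, $l \ne 13$ ensure (\ref{3-1-eq1}) with $\Vol(l) = \Vol(\mathfrak R_l^{(1)})$, which is precisely the optional instance of (\ref{1-1-eq1}) at $\lambda = 6$, $e_0 = 12$. Observing that in situation (i) none of the quantities $u_0$, $n_1(S)$, $a_1(l)$, $a_4(l)$, $\Vol(l)$ involves $t$, the resulting $b_1$, $b_2$ are $t$-independent. Once the hypotheses of Definition \ref{1-3-def: basic notation for global inequalities} are in place, the claimed interval exclusion is the direct output of Proposition \ref{1-3-prop: local-global inequalities}(i). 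I expect no substantive obstacle; the only mild subtlety is the divisor-transfer step in situation (ii), which uses $l \nmid t$ essentially to push $l \mid v_p(N)$ down to $l \mid v_p(z)$.
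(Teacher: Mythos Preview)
Your proposal is correct and follows essentially the same approach as the paper, which merely states that the proof ``follows from Lemma \ref{3-1-lem-2} and is elementary, similar to the proof of Lemma \ref{2-1-lem-2}.'' You have simply spelled out the routine verification of the hypotheses in Definition \ref{1-3-def: basic notation for global inequalities} that the paper leaves implicit, including the key divisor-transfer step $l\mid t\cdot v_p(z)$, $l\nmid t \Rightarrow l\mid v_p(z)$ in situation (ii), and then invoked Proposition \ref{1-3-prop: local-global inequalities}(i) for the interval exclusion.
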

\begin{proof}
The proof follows from Lemma \ref{3-1-lem-2} and is elementary, which is similar to the proof of Lemma \ref{2-1-lem-2}.
\end{proof}

\begin{lem}  \label{3-1-lem-4}
Let $t\ge 11$ be a positive integer, $(x,y,z)$ be a triple of positive coprime integers such that $z\ge 2$ and $\delta_2 x^2 + \delta_3 y^3 = z^t$, where $\delta_2,\delta_3 \in \{\pm 1\}$.

Let $q$ be a prime number $\ge 5$, such that $q\mid t$. 
Let $S$ be a finite set of prime numbers with cardinality $n = |S| \ge 2$, such that for each $l\in S$, we have $l \ge 11$, $l\neq 13$ and $l\nmid q(q^2-1)$.
let $p_0$ be the smallest prime number in $S$; $2\le k\le S$ be a positive integer; $n_0 = \gcd(27, z^t)$, $N \defeq n_0^{-1} [z^t]_2$, $e_0 = 2$;
for each $l\in S$, write $\Vol(l)\defeq \Vol(\mathfrak R_l^{(2)})+\log(2)$ [cf. Lemma \ref{3-1-lem-2}].

Let $u_0 = t$, $S_1 = \{3\}$, $n_1(S) = t$, $p_B = 3$, $n_k(S)$ be the product of the smallest $k$ prime numbers in $S$.
Then the assumptions in Definition \ref{1-3-def: basic notation for global inequalities} are satisfied, and we can define $b_1$, $b_2$. 
Hence if we have $b_1 <  1$ and $n_k(S)\cdot \log(3) > \frac{b_2}{1-b_1}$, then by Lemma \ref{1-3-lem: local-global inequalities}, $\log(N)$ cannot belong to the interval $(\frac{b_2}{1-b_1}, n_k(S)\cdot \log(3) ) .$
\end{lem}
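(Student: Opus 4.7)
The plan is to mirror the proof of Lemma 3.1.3: we will verify every hypothesis of Definition 1.3.4 for the data $(N, n_0, u_0, p_N, S, k, S_1, n_1(S), n_k(S), \Vol(l))$ specified in the statement and then invoke Proposition 1.3.5 (i) directly. The only substantive difference from Lemma 3.1.3 is that the global inequality fed into (1.1.1) comes from Lemma 3.1.2 (iii) --- i.e., from the $2$-torsion initial $\Theta$-data attached to the prime $q \mid t$ --- rather than from the $\mu_6$-version of Lemma 3.1.2 (ii).

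The first and main step is to derive the local-to-global inequality (1.1.1) of Definition 1.3.4 with $\lambda = 6$, $e_0 = 2$, and $\Vol(l) = \Vol(\mathfrak{R}_{l,q}^{(2)}) + \log(2)$. Writing $\tilde{N} := z^t/\gcd(1728, z^t)$ for the $N$ of Lemma 3.1.2, I would first observe that $\tilde{N}$ and the $N$ of the present lemma differ only by a power of $2$, so that $l^{-v_l(N)} N/N_l$ coincides with the $\tilde{N}'_{2l}$ appearing on the left of (3.1.2). On the right, $\log\rad(\tilde{N}) - \log\rad(N)$ and $\log\rad(\tilde{N}_l) - \log\rad(N_l)$ each lie in $\{0, \log(2)\}$; a short case analysis on whether $l \mid v_2(\tilde{N})$ would show that the net excess of the right-hand side of (3.1.2) over the target right-hand side of (1.1.1) is bounded by $\log(2)$, which is absorbed by the $\log(2)$ appended to $\Vol(\mathfrak{R}_{l,q}^{(2)})$. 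After this translation, the coefficients $a_1(l), a_4(l)$ obtained by specializing the optional definitions of Definition 1.3.4 (c) to $e_0 = 2$ match those coming from (3.1.2).

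The remaining structural hypotheses of Definition 1.3.4 are routine. Since $N$ is coprime to $2$ by construction, $p \ge 3 = p_N$ holds for every $p \mid N$; for such $p$, we have $v_p(n_0 N) = v_p(z^t) \ge t = u_0$, where the choice $n_0 = \gcd(27, z^t)$ covers the $p = 3$ case. With $S_1 = \{3\}$, any $p \in B \setminus S_1$ is coprime to $6$, so $v_p(N) = t \cdot v_p(z) \ge t = n_1(S)$; and taking $n_k(S)$ to be the product of the $k$ smallest primes in $S$ is valid, since any $k$ pairwise coprime primes $l_1, \dots, l_k \in S$ dividing $v_p(N)$ force their product to divide $v_p(N)$. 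Once all hypotheses are in place, Proposition 1.3.5 (i) yields $(1 - b_1)\log(N) \le b_2$ whenever $\log(N) < n_k(S)\log(3)$, which under the standing assumption $n_k(S)\log(3) > b_2/(1 - b_1)$ is precisely the stated exclusion. The main obstacle is the translation step: carefully tracking how the discrepancy between the two normalizations of $N$ propagates through (3.1.2), and confirming that a single $\log(2)$ of slack in $\Vol(l)$ is genuinely enough for $l \ge 11$, given the size of the coefficient $\frac{l^2 + 5l}{l^2 + l - 12}\bigl(1 - \frac{1}{2l}\bigr)$ multiplying the excess $\log\rad$-terms.
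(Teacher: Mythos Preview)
Your proposal is correct and follows exactly the route the paper takes: the paper's own proof is the single sentence ``Since $\log\rad(N) \le \log\rad([N]_2) + \log(2)$, the proof follows from Lemma~3.3 and is elementary, which is similar to the proof of Lemma~2.2,'' which is precisely your translation of inequality~(3.2) from the $\tilde N$ of Lemma~3.2 to the coprime-to-$2$ quantity $N$ here, followed by the verification of the hypotheses of Definition~1.4 and an appeal to Proposition~1.6.

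Your closing caveat is well taken and in fact sharper than the paper. The excess on the right-hand side when passing from $\tilde N$ to $N$ is at worst $a_1(l)\log(2)$ with $a_1(l)=\tfrac{l^2+5l}{l^2+l-12}\bigl(1-\tfrac{1}{2l}\bigr)>1$ for every admissible $l$ (e.g.\ $a_1(11)=1.4$), so a single $\log(2)$ of slack in $\Vol(l)$ is not literally sufficient in the worst case $2\mid\tilde N$, $l\nmid v_2(\tilde N)$. This is a minor imprecision in the paper's stated constant rather than a flaw in your reconstruction; replacing the additive $\log(2)$ by $a_1(l)\log(2)$ (or any constant $\ge 1.4\log(2)$) repairs it without affecting the downstream numerics, and the paper's one-line proof glosses over this.
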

\begin{proof}
Since $\log\rad(N) \le \log\rad([N]_2) + \log(2)$, the proof follows from Lemma \ref{3-1-lem-3} and is elementary, which is similar to the proof of Lemma \ref{2-1-lem-2}.
\end{proof}

\begin{prop} \label{3-1-prop}
Let $t\ge 11$ be a positive integer, $(x,y,z)$ be a triple of positive coprime integers such that $z\ge 2$ and $\delta_2 x^2 + \delta_3 y^3 = z^t$, where $\delta_2,\delta_3 \in \{\pm 1\}$. Write $z_1 \defeq 2^{-v_2(z)} z$ for the coprime to $2$ part of $z$.

(i) We have $z_1 \ge 13$; the exponent $t$ is not divided by $6,7,8,9,10,15$, hence We have $t \in \{11,13\}$ or $17\le t < 1000$.

(ii) We have $\log(z^t) < 3730$ for $t = 11$; $\log(z^t) < 3085$ for $t = 13$; and $\log(z^t) < 2789$ for $t \ge 17$.

(iii) We have $t\le 353$ or $t=373$. In addition, we have 
$\log(z_1^t) < 1183$ for $t = 11$; $\log(z_1^t) < 994$ for $t = 13$; 
$\log(z_1^t) < 919$ for $t = 17$; $\log(z_1^t) < 895$ for $t = 19$; 
$\log(z_1^t) < 1019$ for $t = 373$; and  $\log(z_1^t) < 981$ for $t \ge 17$, $t\neq 373$.
\end{prop}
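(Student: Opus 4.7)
The plan is to prove the three assertions in order, relying throughout on the local-global machinery of Proposition \ref{1-3-prop: local-global inequalities} via Lemmas \ref{3-1-lem-3} and \ref{3-1-lem-4}.

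For assertion (i), the inequality $z_1 \ge 13$ and the fact that $t$ is not divisible by any of $6,7,8,9,10,15$ are immediate restatements of Lemma \ref{3-1-lem-1}, (ii). The remaining bound $t < 1000$ requires a preliminary upper bound on $\log(z^t)$. Following the strategy outlined in Remark \ref{1-3-rmk:  local-global inequalities}, we first obtain a crude bound $\log(N) \le C_0$ from standard effective abc / analytic number theory, then refine it by applying Lemma \ref{3-1-lem-3} (i) with $u_0 = 11$ and a small fixed set $S$ of auxiliary primes. Since $z \ge 2$, any upper bound on $\log(z^t)$ translates into an upper bound on $t$, which combined with the divisibility restrictions reduces the possible values to $t \in \{11,13\}$ or $17 \le t < 1000$.

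For assertion (ii), I would apply Lemma \ref{3-1-lem-3} in three regimes: $t = 11$, $t = 13$, and $t \ge 17$ (in the last case with $u_0 = 17$, so that $b_1$ and $b_2$ are independent of $t$). For each regime, by choosing a suitable finite collection of pairs $(S,k)$ with $S \subseteq \{l \ge 11 : l \ne 13\}$, Proposition \ref{1-3-prop: local-global inequalities} rules out an interval of the form $\bigl(\tfrac{b_2}{1-b_1},\, n_k(S)\log 2\bigr)$ as a possible value of $\log(N)$. By chaining finitely many overlapping such intervals together, one eliminates all of $\log(N)$ between the target bound ($3730$, $3085$, or $2789$ respectively) and the crude preliminary bound $C_0$. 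These elimination steps are carried out by computer and implemented in \cite{code_of_zpzhou}, precisely in the spirit of Remark \ref{1-3-rmk:  local-global inequalities}.

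For assertion (iii), the approach is analogous but uses Lemma \ref{3-1-lem-4} in place of Lemma \ref{3-1-lem-3}, yielding bounds on $\log([z^t]_2) = \log(z_1^t \cdot 3^{v_3(z^t)})$ and hence on $\log(z_1^t)$. The new ingredient is that one must select a prime $q \ge 5$ with $q \mid t$ and then restrict $S$ to primes $l \ge 11$, $l \ne 13$, satisfying $l \nmid q(q^2-1)$. For each of the finitely many admissible values of $t$ produced by (i) and (ii), one enumerates the possible choices of $q$, builds compatible sets $S$, and reruns the elimination algorithm of Proposition \ref{1-3-prop: local-global inequalities} to establish the case-by-case inequalities. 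The bound $t \le 353$ or $t = 373$ then follows: since $z_1 \ge 13$, the bound $\log(z_1^t) < 981$ for $t \ge 17$, $t \ne 373$ forces $t \log 13 < 981$, hence $t \le 382$, and filtering by the divisibility constraints of (i) leaves exactly $t \le 353$ or $t = 373$. The principal obstacle will be the bookkeeping for $t = 373$: since $q = 373$ is prime and the constraint $l \nmid q(q^2-1)$ excludes several small primes (notably $11, 17, 31$), the available auxiliary set $S$ shrinks, which is exactly why the weaker bound $\log(z_1^{373}) < 1019$ must be stated separately from the uniform bound $981$.
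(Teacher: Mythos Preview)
Your overall architecture matches the paper's: Lemma \ref{3-1-lem-1} for the divisibility constraints and $z_1\ge 13$, Lemma \ref{3-1-lem-3} plus computer elimination for (ii), and Lemma \ref{3-1-lem-4} plus computer elimination for (iii). Two concrete points need repair.

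First, in (i) you invoke only $z\ge 2$ to convert an upper bound on $\log(z^t)$ into $t<1000$. That is too weak: the paper applies Lemma \ref{3-1-lem-3}, (i) with $u_0$ large (effectively $u_0=1000$) to obtain $\log(z^t)<2560$ whenever $t\ge 1000$, and then uses $z\ge z_1\ge 13$ to get $\log(z^t)\ge 1000\log 13>2564$, a contradiction. With $z\ge 2$ you would only get $t<2560/\log 2\approx 3693$, not $t<1000$. You have just proved $z_1\ge 13$; use it.

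Second, your derivation of ``$t\le 353$ or $t=373$'' in (iii) is circular in the wrong direction. From the uniform bound $\log(z_1^t)<981$ for $t\ge 17$, $t\ne 373$, together with $z_1\ge 13$, you correctly get $t\le 382$; but filtering out multiples of $6,7,8,9,10,15$ does \emph{not} eliminate $355,356,358,359,361,362,\dots$, so you cannot land at $t\le 353$ this way. In the paper both the statement ``$t\le 353$ or $t=373$'' and the listed $\log$-bounds are outputs of the same per-$t$ computation via Lemma \ref{3-1-lem-4}: for each individual $t$ in $[354,999]\setminus\{373\}$ the elimination produces a bound on $\log(z_1^t)$ strictly below $t\log 13$, contradicting $z_1\ge 13$. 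The uniform bound $981$ is simply the worst case recorded over the surviving $t$'s, not the tool used to kill the large ones. Your own sentence about running the algorithm ``for each of the finitely many admissible values of $t$'' is exactly what is needed; just recognise that the exclusion of $354\le t\le 999$, $t\ne 373$, is itself a by-product of that loop, not a consequence of the displayed inequality $<981$. Your explanation of why $t=373$ is exceptional (the primes $11,17,31$ divide $q^2-1$ and are unavailable in $S$) is correct and is more explicit than what the paper writes.
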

\begin{proof}
For assertion (i), by Lemma \ref{3-1-lem-1}, we have $z_1 \ge 13$; the exponent $t$ is not divided by $6,7,8,9,10,15$, hence $t \in \{11,13\}$ or $t \ge 17$.
Then by Lemma \ref{3-1-lem-3}, (i) and the computation in \cite{code_of_zpzhou}, we have $\log(z^t) < 2560$ for $t \ge 1000$. But since $z\ge z_1 \ge 13$, if $t\ge 1000$, then $\log(z^t) \ge 1000 \cdot \log(13) > 2564$ --- a contradiction! Hence $t < 1000$.

assertion (ii) follows from Lemma \ref{3-1-lem-3}, (ii) and the computation in \cite{code_of_zpzhou}.
assertion (iii) follows from Lemma \ref{3-1-lem-4} and the computation in \cite{code_of_zpzhou}.
\end{proof}

\begin{cor} \label{3-1-cor}
For any integer $n\ge 3$, there does not exist any triple $(x, y, z)$ of non-zero coprime integers that satisfies the generalized Fermat equation
$$ x^2 + y^{2n}= z^3 .$$
\end{cor}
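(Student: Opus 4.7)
The plan is to translate $x^2 + y^{2n} = z^3$ into the form of Proposition~\ref{3-1-prop} by a change of variables, then combine with the exceptional-solution list of Lemma~\ref{3-1-lem-1} and with previously settled subsignatures for small $n$. Let $(x, y, z)$ be a non-zero coprime triple with $x^2 + y^{2n} = z^3$. Since $z^3 > 0$ and $y^{2n}$ is insensitive to the sign of $y$, I may assume $y, z > 0$. A standard descent argument (if $p \mid \gcd(x, y)$ then $p^2 \mid z^3$ but $\gcd(p, z) = 1$, a contradiction, and likewise for the other pairs; a mod~$4$ check forces $z$ odd) shows the triple is pairwise coprime. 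Rewriting the equation as $-x^2 + z^3 = (y^2)^n$, the substitution $(X, Y, Z, T) := (x, z, y^2, n)$ with $\delta_2 = -1$, $\delta_3 = +1$ matches the form $\delta_2 X^2 + \delta_3 Y^3 = Z^T$ studied in Section~3; pairwise coprimality gives $\gcd(X, Y, Z) = 1$. The case $|y| = 1$ reduces to $z^3 - x^2 = 1$, which has no non-trivial integer solution by Mih\u{a}ilescu's theorem, so $Z = y^2 \ge 4$.

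For $n \ge 11$, Proposition~\ref{3-1-prop} applies directly: it yields $z_{(6)} = Y_{(6)} \ge 13$, the restriction $n \le 353$ or $n = 373$, and explicit upper bounds on $\log(y^{2n}) = \log(Z^T)$ and on $\log(y_{(6)}^{2n})$. I would then invoke Lemma~\ref{3-1-lem-1}~(i) for the small-$z_{(6)}$ regime: of the five listed exceptional triples, only $(-1549034^2, 15613^3, 33^8)$ has $\delta_2 X^2 < 0$ consistent with our sign convention $\delta_2 = -1$, and it would force $y^{2n} = 33^8$, hence $n = 4$, contradicting $n \ge 11$. Within the remaining finite window of $n$, the explicit bounds on $\log(y_{(6)}^{2n})$ combined with the extra constraint that $Z = y^2$ be a perfect square should cut out only finitely many candidate $y_{(6)}$, which can be eliminated by direct computation analogous to the method illustrated in Remark~\ref{1-3-rmk:  local-global inequalities}.

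For $3 \le n \le 10$ the signature $(2, 2n, 3)$ is a permutation of $(2, 3, 2n)$. The cases $2n \in \{6, 8, 10\}$ are resolved in the literature by Bennett--Chen, Bruin, and Brown respectively (see Introduction), and one verifies that the resulting primitive solutions do not realise $y^{2n}$ as an exact $(2n)$-th power of an integer except as listed among the nine non-Catalan exceptions. For the residual $n \in \{6, 7, 8, 9, 10\}$, a parallel Gaussian-integer factorisation $(x + iy^n)(x - iy^n) = z^3$ in $\mathbb{Z}[i]$ together with $z$ odd yields $x + iy^n = (a + bi)^3$ for coprime $a, b$ of opposite parity, so that $y^n = b(3a^2 - b^2)$ with $\gcd(a, b) = 1$ and $\gcd(b, 3a^2 - b^2)$ dividing $3$. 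Case analysis on whether $3 \mid b$ reduces the problem in the generic subcase $3 \nmid b$ to the auxiliary ternary equation $3a^2 = r^n + s^{2n}$ with $\gcd(r, s) = 1$, and to a $3$-adic variant when $3 \mid b$.

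The main obstacle is twofold: first, rigorously excluding the auxiliary equation $3a^2 = r^n + s^{2n}$ for each intermediate $n$, which requires either a further Frey-curve construction or a descent in a quadratic field; and second, reconciling the non-Catalan triple $33^8 + 1549034^2 = 15613^3$, which at face value provides a primitive non-zero solution to $x^2 + y^8 = z^3$ at $n = 4$ with $y = 33$. The intended reading of the corollary is presumably that this triple is one of the nine known non-Catalan solutions already tacitly excluded by the framework of Theorem~A; without this caveat, the corollary as literally stated fails at $n = 4$, so the statement likely needs to be amended to read ``except for the Catalan and nine non-Catalan solutions''.
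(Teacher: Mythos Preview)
Your approach diverges substantially from the paper's, which is far more economical. The paper simply cites results of Chen and Dahmen to assert that any nontrivial coprime solution forces $n > 10^7$; then, applying Proposition~\ref{3-1-prop}(i) to the rearranged form $-x^2 + z^3 = (y^2)^n$ (with $y^2 \ge 2$) would require $n < 1000$, a contradiction, so $|y| = 1$; and the residual equation $x^2 + 1 = z^3$ is disposed of via the integral points on $Y^2 = X^3 - 1$. Your plan --- bounding $n$ via Proposition~\ref{3-1-prop} and then performing a finite search for $11 \le n \le 373$, together with a Gaussian-integer descent for $3 \le n \le 10$ leading to an unresolved auxiliary equation $3a^2 = r^n + s^{2n}$ --- is both more laborious and incomplete as written.

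That said, your final paragraph is on target: the triple $1549034^2 + 33^8 = 15613^3$ is a genuine nontrivial coprime solution at $n = 4$, so the corollary as literally stated is false there. The Chen--Dahmen results in the literature treat prime exponents, and reducing composite $n$ to an odd prime divisor fails precisely when $n$ is a power of $2$; the case $n = 4$ then survives with this known solution (while $n = 2^k$ for $k \ge 3$ is excluded because $33$ is not a perfect square). The paper's proof glosses over this point, and the statement should either restrict to $n \ne 4$ or explicitly exclude the nine known non-Catalan solutions, consistent with the convention adopted in Theorem~A.
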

\begin{proof}
Suppose that for some $n \ge 3$, there exists a triple of non-zero coprime integers $(x, y, z)$ such that $ x^2 + y^{2n} = z^3 .$ Then we have $n > 10^7$, cf. Chen \cite{Chen-21}, Dahmen \cite{Dahmen-29}. 
Hence by Proposition \ref{3-1-prop}, we have $|y|=1$, $ x^2 + 1 = y^3$.
However, the only integral points on the elliptic curve $Y^2 = X^3 - 1$ are $(1,0)$ and the point at infinity. Hence $(x,y) = (0,1)$, which is impossible!
Hence  the generalized Fermat equation
$ x^2 + y^{2n}= z^3 $ has no non-trivial primitive solution.
\end{proof}

\subsection{Structure of solutions}

We shall analyze the structure of the solution $(x,y,z)$ in more detail.

\begin{lem}  \label{3-2-lem-1}
Let $t\ge 60$ be a positive integer, $(x,y,z)$ be a triple of positive coprime integers such that $z\ge 2$ and $\delta_2 x^2 + \delta_3 y^3 = z^t$, where $\delta_2,\delta_3 \in \{\pm 1\}$. 
Let $q \ge 5$ be a prime number, such that $q\mid t$. 

Let $S$ be the set of prime numbers $p \ge 11$, such that $p\neq 13$, $p\nmid t$ and $p\nmid (q^2-1)$. For each $p\in S$, denote [cf. Lemma \ref{4-1-lem-1}]
\small\begin{align*}
a_1(p) \defeq 6 \cdot \frac{p^2+5p}{p^2+p-12}\cdot (1-\frac{1}{2p}),  \;
a_2(p) \defeq 6 \cdot \Vol(\mathfrak R_{l,q}^{(2)})  + a_1(p)\cdot \log(6p) .
\end{align*}\normalsize

Let $l\in S$, write uniquely \small\begin{gather}  \label{3-2-eq1}
z = z_1 \cdot 2^{t_2} \cdot 3^{t_3} \cdot l^{t_l} \cdot z_l^{l},
\end{gather}\normalsize
where $z_1, z_l, 2, 3, l$ are pairwise coprime positive integers, $t_2, t_3, t_l \ge 0$, $z_l^l = z_{(6l)}^{\lagl}$.

(i) We have $z_l = 1$, hence
\small\begin{align}  \label{3-2-eq2}
a_2(l) \ge t \cdot \log(z_1) - a_1(l) \cdot \log\rad(z_1) \ge  (t - a_1(l)) \cdot \log(z_1).
\end{align}\normalsize

(ii) Let $p$ be a prime divisor of $z$. 
When $t\ge 93$, we have $p \le 31$; when $t\ge 110$ and $t\neq 113, 121$, we have $p\le 17$.

(iii) When $t\ge 95$, we have $z_{(6)} < 43$; when $t \ge 107$, we have $z_{(6)} < 35$.

(iv) We have $t\le 109$ or $t = 113, 121$.
\end{lem}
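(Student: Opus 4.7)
The plan is to exploit the $2$-torsion initial $\Theta$-data from Lemma \ref{3-1-lem-2} (iii) and bootstrap the resulting inequality (\ref{3-1-eq2}) through (i)--(iv). For (i), I would begin by observing that the decomposition $z = z_1 \cdot 2^{t_2}\cdot 3^{t_3}\cdot l^{t_l}\cdot z_l^l$ induces a decomposition of $N = z^t/\gcd(1728,z^t)$: since $l\nmid t\cdot v_p(z_1)$ for each $p\mid z_1$, the factor $z_1^t$ divides $N'_{2l}$; meanwhile $z_l^{lt}$ is entirely absorbed in $N_l$, and the primes $2, 3, l$ contribute at most $\log(6l)$ to $\log\rad(N)$. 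Substituting $\log\rad(N)\le\log\rad(z_1)+\log\rad(z_l)+\log(6l)$ and $\log\rad(N_l)\ge\log\rad(z_l)$ into (\ref{3-1-eq2}), multiplying by $6$, and absorbing the $\log(6l)$ contribution into $a_2(l)$, one obtains the preliminary estimate
\begin{equation*}
t\log(z_1)\;\le\;a_1(l)\log\rad(z_1) + \tfrac{a_1(l)}{2-1/l}\log\rad(z_l) + a_2(l).
\end{equation*}

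To establish $z_l = 1$ I argue by contradiction. Suppose $p\mid z_l$; then $p\nmid 6l$ and $v_p(z)\ge l\ge 11$. Pick an auxiliary prime $l'\in S\setminus\{l\}$ with $l'\nmid v_p(z)$ (possible because $S$ is infinite while $v_p(z)$ has only finitely many prime divisors). Under the $l'$-decomposition, $p$ moves into $z_1^{(l')}$ with the same valuation $v_p(z)\ge l\ge 11$. Applying the preliminary estimate to $l'$ in place of $l$ gives
$$t\cdot v_p(z)\log p \;\le\; a_1(l')\log\rad(z_1^{(l')}) + \tfrac{a_1(l')}{2-1/l'}\log\rad(z_l^{(l')}) + a_2(l').$$
The right-hand side is bounded using $\log\rad(z_1^{(l')})\le\log z_{(6)}$ (from Proposition \ref{3-1-prop} (iii)) and $\log\rad(z_l^{(l')})\le(\log z^t)/(l't)$ (from Proposition \ref{3-1-prop} (ii)); for $t\ge 60$ and $l'\in\{11,17,19,23\}$ these quantities are controlled, while the left-hand side is at least $60\cdot 11\cdot\log 5 > 1000$, forcing a contradiction. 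Hence $z_l = 1$, so the middle term of the preliminary estimate vanishes, and (i) follows; the second inequality uses only $\log\rad(z_1)\le\log z_1$.

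For (ii), I would apply (i) prime by prime: for any $p\mid z$ with $p\nmid 6l$ one has $p^{v_p(z)}\mid z_1$, so $\log p\le\log z_1\le a_2(l)/(t-a_1(l))$. Choosing the smallest admissible $l\in S$ (which is $l=11$ when $11\nmid t$, and $l\in\{17,19,23,29,31\}$ otherwise), the right-hand side forces $p\le 31$ for $t\ge 93$ and $p\le 17$ for $t\ge 110$ with $t\notin\{113,121\}$. The exceptional values are precisely those where the divisibility constraints of $S$ raise the smallest usable $l$: $121 = 11^2$ removes $11$ from $S$, and the congruences at $t=113$ remove the next small candidate. For (iii), combining $\log z_1\le a_2(l)/(t-a_1(l))$ with the elementary estimate $l^{t_l t}\le z^t < e^{2789}$ (hence a bound on $t_l$), and using $z_{(6)} = z_1\cdot l^{t_l}$ once $z_l = 1$, yields $z_{(6)}<43$ for $t\ge 95$ and $z_{(6)}<35$ for $t\ge 107$ after running over admissible $l\in S$.

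For (iv), suppose $t\ge 110$ and $t\notin\{113,121\}$. The bounds of (ii) and (iii), together with Lemma \ref{3-1-lem-1} (ii) (which requires $z_{(6)}\ge 13$ and excludes powers of $5, 7, 11$) and the coprimality of $z_{(6)}$ with $6$, leave only $z_{(6)}\in\{13,17\}$. For each such value and each residue of $t$ modulo $6$, the equation $\delta_2 x^2+\delta_3 y^3 = z^t$ rewrites, after extracting $6$-th powers as in the proof of Lemma \ref{3-1-lem-1} (i), as an $S$-integral point problem on $Y^2 = X^3+\delta_2 2^{r_2}3^{r_3}z_{(6)}^r$ over $S=\{2,3,z_{(6)}\}$; the Magma computation in \cite{code_of_zpzhou} shows none of the resulting $S$-integral points correspond to $t\ge 110$, $t\notin\{113,121\}$, completing the contradiction. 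The main obstacle will be the auxiliary-prime argument for $z_l = 1$: for each pair $(t,l)$ one must exhibit a suitable $l'\in S$ avoiding all forbidden divisibilities, and the tightness of the resulting comparison depends on keeping the $\log\rad(z_l^{(l')})$ term under control, for which Proposition \ref{3-1-prop} (ii) is just barely strong enough.
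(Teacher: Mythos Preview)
Your treatment of (ii) and (iii) matches the paper's, but for (i) and (iv) you take a much longer route than necessary.

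For (i), the paper does not use any auxiliary prime $l'$ at all. Since $z_l$ is coprime to $2$, one has $z_l^{lt}\mid z_{(2)}^t$, and Proposition \ref{3-1-prop} (iii) already gives $\log(z_{(2)}^t)<1019$ (for the relevant range of $t$). Hence
\[
\log(z_l)\;<\;\frac{1019}{l\,t}\;\le\;\frac{1019}{11\cdot 60}\;<\;\log 5,
\]
and since $\gcd(z_l,6l)=1$ this forces $z_l=1$ in one line. Once $z_l=1$ the middle term in your ``preliminary estimate'' vanishes and (\ref{3-2-eq2}) drops out of (\ref{3-1-eq2}) exactly as you say. Your auxiliary-prime contradiction argument can be made to work, but it requires controlling $a_2(l')$ uniformly in $q$ and checking that a small $l'\in S$ with $l'\nmid v_p(z)$ is always available; none of this is needed.

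For (iv), after (ii) and (iii) you are left with $z_{(6)}\in\{1,5,7,11,13,17,25\}$. The paper simply invokes Lemma \ref{3-1-lem-1}: part (ii) rules out $1,5,7,11,25$, and part (i) already classified all solutions with $z_{(6)}<19$ (hence including $13,17$), all of which have $t\le 9$ or $z=1$. There is no need to rerun the $S$-integral point computation. Your proposed Magma step duplicates work already encapsulated in Lemma \ref{3-1-lem-1}.
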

\begin{proof}
For assertion (i), by Proposition \ref{3-1-prop}, we have $1019 > \log(z_{(2)}^t) \ge z_l^{tl}$, hence $\log(z_l) < \frac{1019}{t l } \le \frac{1019}{60 \cdot 11} < \log(5)$, $z_l < 5$. Then since $\gcd(6l, z_l) = 1$, we have $z_l = 1$. 
Hence $\log(z_1^t) \le \log(N'_{2l})$, $\log\rad(N) \le \log\rad(6l z_1)$, and  (\ref{3-2-eq2}) follows from (\ref{3-1-eq2}).

For assertion (ii), choose $l\in S$ such that $l\neq p$, then we have $p\mid z_1$. 
Hence by (\ref{3-2-eq2}), we have $\log(p) \le \log(z_1) \le \frac{a_2(l)}{t - a_1(l)}$.
Recall that $t < 400$ [cf. Proposition \ref{3-1-prop}, (iii)]. 
For each $95 \le t < 400$, we can choose such $l$ explicitly, and get upper bounds for each possible $p$. Then assertion (ii) follows from the computation in \cite{code_of_zpzhou}.

For assertion (iii), we can choose $l\in S$ such that $l\nmid z_1$ explicitly, then by (\ref{3-2-eq2}), we have $\log(z_1) \le \frac{a_2(l)}{t - a_1(l)}$. Then assertion (ii) follows from the computation in \cite{code_of_zpzhou}.

For assertion (iv), assume that $t \ge 110$ and $t\neq 113, 121$, then by (ii),  $z_{(6)}$ only has prime divisors $\in \{5,7,11,13,17\}$. Then since by (iii) we have $z_{(6)} < 35$, hence $z_{(6)} \in \{1, 5, 7, 11, 13, 17, 25\}$. But this contradicts Lemma \ref{3-1-lem-1}, (ii), hence assertion (iv) holds.
\end{proof}

\begin{cor} \label{3-2-cor}
Suppose that $(r,s,t)$ is a permutation of $(2,3,n)$ with $n\ge 7$, such that $n$ does not satisfy the following conditions:
\begin{itemize}
\item $11\le n \le 109$ or $n\in \{113, 121\}$.
\item $n$ is not divided by $6,7,8,9,10,15$.
\end{itemize}
Then the generalized Fermat equation $x^r + y^s = z^t$ has no non-trivial primitive solution except for the solutions related to the  Catalan solutions and the nine non-Catalan solutions enumerated in the introduction.
\end{cor}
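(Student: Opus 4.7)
The overall plan is to reduce to Lemma \ref{3-2-lem-1}(iv), which packages the substantive IUT-theoretic input. After permuting variables and absorbing signs, any non-trivial primitive solution of $x^r + y^s = z^t$ with $(r, s, t)$ a permutation of $(2, 3, n)$ can be rewritten as a positive coprime triple $(x, y, z)$ satisfying $\delta_2 x^2 + \delta_3 y^3 = z^n$ for suitable $\delta_2, \delta_3 \in \{\pm 1\}$, which is the normalised form used throughout Section 3.

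The first step is to dispose of signatures already settled in the literature. If $n \in \{6, 7, 8, 9, 10, 15\}$ the equation has been solved in the references cited in the Introduction, so there is nothing to prove. More generally, if $n$ is a proper multiple of some $k \in \{6, 7, 8, 9, 10, 15\}$, writing $n = km$ and factoring $z^n = (z^m)^k$ exhibits $(x, y, z^m)$ as a positive coprime solution of signature $(2, 3, k)$, reducing again to the solved case. Hence I may assume $n \ge 11$ with $n$ divisible by none of $6, 7, 8, 9, 10, 15$, and by the hypothesis of the corollary this forces $n \ge 110$ and $n \notin \{113, 121\}$.

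Next I handle the degenerate case $z = 1$: the equation collapses to $\delta_2 x^2 + \delta_3 y^3 = 1$, and Mih\u{a}ilescu's theorem leaves only the positive coprime possibility $3^2 - 2^3 = 1$, which is the Catalan solution excluded in the statement. So I may assume $z \ge 2$. It then remains to verify the hypotheses of Lemma \ref{3-2-lem-1}(iv): the bound $n \ge 60$ is immediate, and I must exhibit a prime $q \ge 5$ dividing $n$. For the latter, if every prime factor of $n$ lay in $\{2, 3\}$ then $n = 2^a 3^b$ with $a \le 2$ and $b \le 1$ (because $8 \nmid n$ and $9 \nmid n$), whence $n \le 12$, contradicting $n \ge 110$. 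Thus Lemma \ref{3-2-lem-1}(iv) applies and yields $n \le 109$ or $n \in \{113, 121\}$, contradicting the reduction above.

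There is no substantive obstacle at the level of the corollary itself: the hard work --- constructing the $2$-torsion initial $\Theta$-datum for the Frey--Hellegouarch-type curve $Y^2 = X^3 + 3bX + 2a$, extracting the log-volume inequality (\ref{3-1-eq2}), and combinatorially eliminating the possible prime divisors of $z_{(6)}$ via a judicious family of auxiliary primes $l \in S$ --- was already carried out inside Lemma \ref{3-2-lem-1}. The corollary is effectively a short bookkeeping wrapper around part (iv) of that lemma, and the only places where one must take care are the initial normalisation to the form $\delta_2 x^2 + \delta_3 y^3 = z^n$ and the verification that $n$ retains a prime factor $\ge 5$ after the elementary reductions.
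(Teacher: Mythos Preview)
Your proof is correct and follows essentially the same route as the paper: normalise to the form $\delta_2 x^2 + \delta_3 y^3 = z^n$, dispose of the cases $n \in \{6,7,8,9,10,15\}$ (and their multiples) via the known results in the Introduction, handle $z=1$ via Mih\u{a}ilescu, and then invoke Lemma~\ref{3-2-lem-1}(iv) to force $n \le 109$ or $n \in \{113,121\}$. If anything, your write-up is slightly more explicit than the paper's own proof in checking the hypotheses $n \ge 60$ and the existence of a prime $q \ge 5$ dividing $n$, which the paper leaves implicit (the latter being contained in Lemma~\ref{3-1-lem-1}(ii)).
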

\begin{proof}
Suppose that $(x,y,z)$ is a non-trivial primitive solution to the equation $x^r + y^s = z^t$, such that $|x|, |y|, |z| \ge 2$.
By interchanging $(r,s,t)$, we can assume without loss of generality that $(r,s,t) = (2,3,n)$, and $(x,y,z)$ is a triple of positive coprime integers such that $z\ge 2$ and $\delta_2 x^2 + \delta_3 y^3 = z^t$, where $\delta_2,\delta_3 \in \{\pm 1\}$. 

Since the signatures $(2,3,n)$ with $n\in\{6,7,8,9,10,15\}$ have been solved [cf. Introduction], we have $n\ge 11$ and $n$ is not divided by $6,7,8,9,10,15$.
Then by Lemma \ref{3-2-lem-1}, we have $n \le 109$ or $n\in \{113, 121\}$.
\end{proof}

\section{The signatures ($\mathbf{3,r,s}$)}

Let $r,s\ge 3$ be positive integers. In this section, we shall consider about the generalized Fermat equation with signature $(3,r,s)$ or its permutation.
Since the signatures $(3,3,n)$ and its permutations have been solved for $n\ge 3$ [cf. \cite{Diophantine_after_IUT_I}],  and the signatures $(3,4,5), (3,5,5)$ and their permutations have been solved [cf. Introduction],
we only need to consider for $r,s \ge 4$, $\max\{r,s\} \ge 7$ and $3\nmid rs$.

\subsection{Upper bounds}
In this section, we shall assume that $r,s \ge 4$ are positive integers, $(x,y,z)$ is a triple of positive coprime integers such that $\delta_r x^r + \delta_s y^s = z^3$, where $\delta_r,\delta_s \in \{\pm 1\}$.

\begin{lem}  \label{4-1-lem-1}
Let $r,s \ge 4$ be positive integers, $(x,y,z)$ be a triple of positive coprime integers such that $\delta_r x^r + \delta_s y^s = z^3$, where $\delta_r,\delta_s \in \{\pm 1\}$. 
Let $l\ge 11$ be a prime number such that $l\neq 13$.

(i) Let $(a,b,c) = (\delta_r x^r, \delta_s y^s, z)$, then $a + b = c^3$, $\gcd(a,b,c) = 1$.
Let $E$ be the elliptic curve defined over $\Q$ by the equation 
$$Y^2 + 3cXY + aY= X^3,$$
then $E$ is an elliptic curve with 
$$ j(E) = \frac{27 c^3 (a+9b)^3}{a^3 b} = \frac{27 c^3 (a+9b)^3}{\delta_r \delta_s x^{3r} y^s} \notin J \defeq \{0,2^6\cdot 3^3,2^2\cdot 73^3\cdot 3^{-4},2^{14}\cdot 31^3\cdot 5^{-3}\} .$$
Write 
$$N \defeq x^{3r} y^s / \gcd(27, x^{3r} y^s), \; N_l \defeq N^{\lagl}, \; N'_l \defeq (N/N_l)_{(l)}, \;  N'_{2l} \defeq (N/N_l)_{(2l)}. $$
Then $x,y,z \ge 2$, $N$ is the denominator of $j(E)$ and $N$ is not a power of $2$.

(ii) Write $F\defeq \Q(E[12])$,
then when $N'_l \neq 1$, by the construction in \cite{Diophantine_after_IUT_I}, Proposition 2.5, there exists a $\mu_6$-initial $\Theta$-data $\mathfrak D = \mathfrak D(E,F,l,\mu_6)$ of type $(l, N, N'_l)$.

Let $\mathfrak{R}_l^{(1)}$ be the ramification dataset [cf. \cite{Diophantine_after_IUT_I}, Definition 1.12] consists of the following data:
\begin{gather*}
l_0 = l, \;  e_0 = 12, \; S_0=\{2,3,l\}, \; S_{\gen}^{\multi} = \{e: e\mid 12 l \},  \;
S_2^{\good} = \{e: e\mid 2^8\cdot 3^2, 2\mid e\}, \\ 
S_3^{\good} = \{e: e\mid 2^6\cdot 3^2, 2\mid e\}, \;
S_l^{\good} = \{l-1, l(l-1), l^2-1\}, \\
S_2^{\multi} = \{e: e\mid 24l, 2\mid e\}, \; S_3^{\multi} = \{e\mid 48l, 2\mid e\}, \;
S_l^{\multi} = \{(l-1)\cdot e: e\mid 12l\} .
\end{gather*}
Then $\mathfrak D$ admits $\mathfrak R_l^{(1)}$.
Hence we have 
\begin{equation} \small \label{4-1-eq1}
\frac{1}{6}\log(N'_l) \le  \frac{l^2+5l}{l^2+l-12}\cdot \big( (1-\frac{1}{12  l})\cdot \log\rad(N) - \frac{1}{12}(1- \frac{1}{l}) \log\rad(N_l) \big) + \Vol( \mathfrak R_l^{(1)} ).
\end{equation}
The above inequality holds for $N'_l = 1$.

(iii) Write $F\defeq \Q(E[4])$,
then when $N'_{2l} \neq 1$, by the construction in Proposition \ref{1-prop: construction of mu_2 initial Theta-data}, there exists a $2$-torsion initial $\Theta$-data $\mathfrak D = \mathfrak D(E,F,l,2\text{-tor})$ of type $(l, N, N'_{2l})$.

Let $\mathfrak{R}_l^{(2)}$ be the ramification dataset [cf. \cite{Diophantine_after_IUT_I}, Definition 1.12] consists of the following data:
\begin{gather*}
l_0 = l, \;  e_0 = 4, \; S_0=\{2,3,l\}, \; S_{\gen}^{\multi} = \{e: e\mid 4 l \},  \;
S_2^{\good} = \{e: e\mid 96, 2\mid e\}, \\ 
S_3^{\good} = \{e: e\mid 12, 2\mid e\}, \;
S_l^{\good} = \{l-1, l(l-1), l^2-1\}, \\
S_2^{\multi} = \{e: e\mid 8 l, 2\mid e\}, \; S_3^{\multi} = \{e\mid 8 l, 2\mid e\}, \;
S_l^{\multi} = \{(l-1)\cdot e: e\mid 4 l\} .
\end{gather*}
Let $\mathfrak{R}_l^{(3)}$ be the ramification dataset obtained by setting $S_l^{\good} = \emptyset$ in the definition of $\mathfrak{R}_l^{(2)}$.
Then $\mathfrak D$ admits $\mathfrak R_l^{(3)}$; and if $l\mid xy$, then $\mathfrak D$ admits $\mathfrak R_l^{(3)}$.
Hence we have 
\begin{equation} \small \label{4-1-eq2}
\frac{1}{6}\log(N'_{2l}) \le  \frac{l^2+5l}{l^2+l-12}\cdot \big( (1-\frac{1}{4  l})\cdot \log\rad(N) - \frac{1}{4}(1- \frac{1}{l}) \log\rad(N_l) \big) + \Vol(l).
\end{equation}
where we write $\Vol(l) \defeq \Vol(\mathfrak R_l^{(2)})$ if $l\nmid xyz$, and write $\Vol(l) \defeq \Vol(\mathfrak R_l^{(3)}) \le \Vol(\mathfrak R_l^{(2)})$ if $l\mid xyz$.
The above inequality holds for $N'_{2l} = 1$.

\end{lem}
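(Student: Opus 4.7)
The approach is to specialise the general constructions of Section~1 to the Frey--Hellegouarch curve $E: Y^2 + 3cXY + aY = X^3$, following the template of the proofs of Lemma~\ref{2-1-lem-1} and Lemma~\ref{3-1-lem-2}.

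For assertion~(i), I set $(a,b,c) = (\delta_r x^r, \delta_s y^s, z)$, which satisfies $a+b=c^3$ and $\gcd(a,b,c) = 1$ because $(x,y,z)$ is primitive. The formula for $j(E)$ and the identification of its denominator $N = x^{3r}y^s/\gcd(27, x^{3r}y^s)$ are immediate from Corollary~\ref{1-2-cor3}, (i). To obtain $x,y,z\ge 2$, I appeal to Catalan's conjecture: forcing any of the three to equal $1$ reduces the equation to the Catalan shape $1 \pm u^m = \pm v^n$, whose unique positive solution $3^2 - 2^3 = 1$ is incompatible with $r,s\ge 4$. Finally $j(E)\notin J$ and $N$ not a power of $2$ are checked by a short case analysis on the $2$-adic and $3$-adic valuations of the quantities involved, using $a+b = c^3$ together with $r,s\ge 4$ to rule out each exceptional $j$-value.

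For~(ii), the field $F = \Q(E[12])$ is Galois over $\Q$ and contains $\sqrt{-1}\in \mu_4$, while Corollary~\ref{1-2-cor3}, (i) ensures that $E$ is semi-stable at every prime $p\neq 3$. Since $N$ is not a power of $2$ and $l\ge 11$, $l\neq 13$, all hypotheses of \cite{Diophantine_after_IUT_I}, Proposition~2.5 are met, giving the $\mu_6$-initial $\Theta$-data $\mathfrak{D}(E,F,l,\mu_6)$ of type $(l,N,N'_l)$ whenever $N'_l\neq 1$. That $\mathfrak{D}$ admits $\mathfrak{R}_l^{(1)}$ is then a direct comparison: for each residue characteristic $p$, the allowed divisors of $e_p(E,F,l)$ supplied by Corollary~\ref{1-2-cor0} match exactly the sets $S_p^{\good}$ and $S_p^{\multi}$ listed in $\mathfrak{R}_l^{(1)}$. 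Invoking Corollary~\ref{1-cor: The log-volume of ramification dataset} with $e_0 = 12$ produces~\eqref{4-1-eq1}; the $N'_l = 1$ case is automatic since $\Vol(\mathfrak{R}_l^{(1)})\ge 0$.

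Part~(iii) proceeds in parallel but uses the $2$-torsion theory. Here $F = \Q(E[4])$ is Galois, satisfies $F(E[2]) = F$ trivially, and contains $\sqrt{-1}\in \mu_4\subseteq F$; since $E$ is already defined over $F_{\tmod} = \Q$, the ``model over $F_{\tmod}$'' condition is automatic, so Proposition~\ref{1-prop: construction of mu_2 initial Theta-data} supplies the $2$-torsion initial $\Theta$-data $\mathfrak{D}$ of type $(l,N,N'_{2l})$ whenever $N'_{2l}\neq 1$. The ramification indices read off from Corollary~\ref{1-2-cor3}, (ii) coincide with $S_p^{\good}\cup S_p^{\multi}$ in $\mathfrak{R}_l^{(2)}$; when $l\mid xy$ the curve $E$ has multiplicative reduction at $l$, so $S_l^{\good}$ is vacuous and $\mathfrak{D}$ in fact admits the strictly smaller dataset $\mathfrak{R}_l^{(3)}$. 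Corollary~\ref{1-cor: The log-volume of ramification dataset} with $e_0 = 4$ then yields~\eqref{4-1-eq2}. The main obstacle I anticipate lies in~(i), specifically checking $j(E)\notin J$ and that $N$ is not a power of $2$; everything else reduces to careful bookkeeping against the preparatory results of Section~1.
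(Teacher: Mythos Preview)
Your proposal is correct and follows essentially the same route as the paper: part~(i) via Corollary~\ref{1-2-cor3}(i) together with Catalan, and parts~(ii)--(iii) by specialising the template of Lemmas~\ref{2-1-lem-1} and~\ref{3-1-lem-2}, invoking Corollary~\ref{1-2-cor0} for $F=\Q(E[12])$ and Corollary~\ref{1-2-cor3}(ii) for $F=\Q(E[4])$, then applying Corollary~\ref{1-cor: The log-volume of ramification dataset}. The paper's own proof is in fact terser than yours---it simply asserts that $j(E)\notin J$ and $N$ is not a power of~$2$ once $x,y,z\ge 2$ is known, and refers the reader to Lemma~\ref{2-1-lem-1} for (ii)--(iii)---so your more explicit bookkeeping is entirely in line with what is needed.
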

\begin{proof}
For assertion (i), $ j(E) = \frac{27 c^3 (a+9b)^3}{a^3 b}$ follows from Corollary \ref{1-2-cor3}, (i). Then since $r,s \ge 5$, by Catalan's conjecture we have $x,y,z \ge 2$. Hence we can see that $j(E) \not\in J$, and $N$ is not a power of $2$.

The proof of assertions (ii) and (iii) are similar to that of Lemma \ref{2-1-lem-1}.
\end{proof}

\begin{lem}  \label{4-1-lem-2}
Let $r \ge 4$, $s\ge 7$ be positive integers, $(x,y,z)$ be a triple of positive coprime integers such that $\delta_r x^r + \delta_s y^s = z^3$, where $\delta_r,\delta_s \in \{\pm 1\}$. 

Let $S$ be a finite set of prime numbers with cardinality $n = |S| \ge 2$, such that for each $l\in S$, we have $l \ge 11$, $l\neq 13$;
let $2\le k\le S$ be a positive integer, $p_0$ be the smallest prime number in $S$, $k(S)$ be the product of $k$ smallest prime numbers in $S$;
let $n_0 = \gcd(27, x^{3r} y^s)$, $N \defeq n_0^{-1} x^{3r} y^s$, $7 \le u_0 \le \min\{3r, s\}$, $p_N = 2$, $e_0 = 12$, $S_1 = \{3\}$, $n_1(S) = p_0$, $n_k(S) = k(s)$;
for each $l\in S$, write $\Vol(l)\defeq \Vol(\mathfrak R_l^{(1)})$, cf. Lemma \ref{4-1-lem-1}.
Suppose that one of the following situations is satisfied:

\begin{itemize}
\item [(a)] Let $u'_0 \ge u_0$ be a positive integer, $P\subseteq \{p: v_p(N) \ge u'_0 \} \cup \{p: p\nmid N\}$, $n'_1(S) = \max\{u'_0, 2p_0\}$.
In particular, we can let $u'_0 = \min\{3r, s\}$, $P = \{p: p\mid N\}$, $n'_0 = n_0$ and $N' = N$.

\item [(b)] Let $u'_0 \ge u_0$ be a positive integer, $P\subseteq \{p: v_p(N) \ge u'_0 \} \cup \{p: p\nmid N\}$, $n'_1(S) = \min\{v_p(N): p\in B'\setminus S_1\}$ [for the definition of $B'$, cf. Definition \ref{1-2-def}, (e)], $n'_0 = 27$ if $3\in P$ and $n'_0 = 1$ if $2\notin P$.

\item [(c)] In the situation of (b).
if $l\nmid rs$ for each $l\in S$, then we can let $u'_0 = \min\{3r, s\}$, $n'_1(S) = u'_0 p_0$, $P=\{p: p\mid xy\}$, $N' = (n'_0)^{-1} x^{3r} y^{s}$;
if $l\nmid r$ for each $l\in S$, then we can let $u'_0 = 2r$, $n'_1(S) = u'_0 p_0$, $P=\{p: p\mid x\}$, $N' = (n'_0)^{-1} x^{3r}$;
if $l\nmid s$ for each $l\in S$, then we can let $u'_0 = s$, $n'_1(S) = u'_0 p_0$, $P=\{p: p\mid y\}$, $N' = (n'_0)^{-1} y^{s}$.
\end{itemize}

Then the assumptions in Definition \ref{1-3-def: basic notation for global inequalities} are satisfied, and we can define $b_1$, $b_2$, $b'_1$. 
Hence if $b_1 \le  1$, $b'_1 < 1$ and $n_k(S)\cdot \log(2) > \frac{b_2}{1-b'_1}$, then by Lemma \ref{1-3-lem: local-global inequalities}, $\log(N')$ cannot belong to the interval $(\frac{b_2}{1-b'_1}, n_k(S)\cdot \log(2) ) .$
\end{lem}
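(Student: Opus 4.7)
The plan is to mirror the verification carried out in the proof of Lemma \ref{2-1-lem-2}, feeding the $\mu_6$-initial $\Theta$-data inequality (\ref{4-1-eq1}) of Lemma \ref{4-1-lem-1}(ii) into the framework of Definition \ref{1-3-def: basic notation for global inequalities}. First I would use the ``optional'' specialization in Definition \ref{1-3-def: basic notation for global inequalities}(c), taking $\lambda = 6$ together with the specified $a_1(l), a_4(l)$, so that (\ref{4-1-eq1}) matches the required bound (\ref{1-1-eq1}) with $\Vol(l) := \Vol(\mathfrak R_l^{(1)}) \ge 0$. The substantive work is then purely the checking of the threshold conditions $v_p(n_0 N) \ge u_0$, $v_p(N) \ge n_1(S)$ on $B \setminus S_1$, and $v_p(N) \ge n_k(S)$ when $k$ distinct primes of $S$ divide $v_p(N)$; followed by the analogous primed conditions for each of the three situations (a), (b), (c).

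For the unprimed parameters: since $N = n_0^{-1} x^{3r} y^s$ with $n_0 = \gcd(27, x^{3r} y^s)$, any $p \mid N$ is coprime to all of $x, y, z$ except one of $x, y$, so $v_p(n_0 N) = v_p(x^{3r} y^s)$ is either a positive multiple of $3r$ or of $s$; using $r \ge 4$ and $s \ge 7$ this is at least $\min\{3r, s\} \ge u_0$. For $p \in B \setminus \{3\}$ some $l \in S$ satisfies $l \mid v_p(N)$, whence $v_p(N) \ge l \ge p_0 = n_1(S)$; and if $k$ distinct primes $l_1, \dots, l_k \in S$ divide $v_p(N)$ then $v_p(N) \ge l_1 \cdots l_k \ge k(S) = n_k(S)$. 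The trivial bound $p \ge 2 = p_N$ completes the check.

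For the primed parameters I would treat (a), (b), (c) in turn, with (c) being the only one requiring new input. In (a) and (b) the hypothesis $P \subseteq \{p : v_p(N) \ge u'_0\} \cup \{p : p \nmid N\}$ together with the formula for $n'_1(S)$ makes everything automatic. In (c) one combines $l \mid v_p(N)$ with the coprimality hypothesis: for the sub-case $P = \{p : p \mid x\}$, $u'_0 = 2r$, any $p \in B' \setminus \{3\}$ satisfies $v_p(N) = 3r \cdot v_p(x)$, and since $l \nmid r$ and $l \ge 11 > 3$ we have $l \nmid 3r$, forcing $l \mid v_p(x)$, hence $v_p(N) \ge 3 r l \ge 3 r p_0 \ge 2 r p_0 = n'_1(S)$. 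The sub-case $P = \{p : p \mid y\}$, $u'_0 = s$ is identical with $s$ in place of $3r$, and the sub-case $P = \{p : p \mid xy\}$ splits into the two previous arguments depending on whether $p \mid x$ or $p \mid y$, each yielding $v_p(N) \ge \min\{3r, s\} \cdot p_0 = u'_0 p_0 \ge n'_1(S)$.

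The final conclusion about $\log(N') \notin (b_2/(1-b'_1), n_k(S) \log 2)$ then follows directly from Proposition \ref{1-3-prop: local-global inequalities}(ii). I expect no genuine obstacle: everything is routine bookkeeping, and the only point meriting care is to ensure that $l \ge 11$ and $l \ne 13$ are used consistently so that (\ref{4-1-eq1}) (the sharper $\mu_6$-inequality) is actually available rather than the weaker $2$-torsion bound (\ref{4-1-eq2}).
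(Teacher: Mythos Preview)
Your proposal is correct and follows the same approach as the paper, which simply states that the proof ``follows from Lemma \ref{4-1-lem-1} and is elementary, which is similar to the proof of Lemma \ref{2-1-lem-2}.'' Your write-up is in fact considerably more detailed than the paper's own treatment: you explicitly verify the threshold conditions on $u_0$, $n_1(S)$, $n_k(S)$ and the primed variants in each of (a), (b), (c), whereas the paper defers all of this to the template of Lemma \ref{2-1-lem-2} and leaves the reader to make the obvious substitutions (replacing the even exponents $2r,2s,2t$ there by $3r,s$ here, and $S_1=\{2\}$ by $S_1=\{3\}$).
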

\begin{proof}
The proof follows from Lemma \ref{4-1-lem-1} and is elementary, which is similar to the proof of Lemma \ref{2-1-lem-2}.
\end{proof}

\begin{lem}  \label{4-1-lem-3}
Let $r \ge 4$, $s\ge 7$ be positive integers, $(x,y,z)$ be a triple of positive coprime integers such that $\delta_r x^r + \delta_s y^s = z^3$, where $\delta_r,\delta_s \in \{\pm 1\}$. 

Let $S$ be a finite set of prime numbers with cardinality $n = |S| \ge 2$, such that for each $l\in S$, we have $l \ge 11$, $l\neq 13$;
let $2\le k\le S$ be a positive integer, $p_0$ be the smallest prime number in $S$, $k(S)$ be the product of $k$ smallest prime numbers in $S$;
let $n_0 = \gcd(27, x^{3r} y^s)$, $N \defeq (n_0^{-1} x^{3r} y^s)_{(2)}$, $7 \le u_0 \le \min\{3r, s\}$, $p_N = 3$, $e_0 = 4$, $S_1 = \{3\}$, $n_1(S) = p_0$, $n_k(S) = k(s)$;
for each $l\in S$, write $\Vol(l)\defeq \Vol(\mathfrak R_l^{(2)}) + \log(2)$, cf. Lemma \ref{4-1-lem-1}.
Suppose that one of the following situations is satisfied:

\begin{itemize}
\item [(a)] Let $u'_0 \ge u_0$ be a positive integer, $P\subseteq \{p: v_p(N) \ge u'_0 \} \cup \{p: p\nmid N\}$, $n'_1(S) = \max\{u'_0, 2p_0\}$.
In particular, we can let $u'_0 = \min\{3r, s\}$, $P = \{p: p\mid N\}$, $n'_0 = n_0$ and $N' = N$.

\item [(b)] Let $u'_0 \ge u_0$ be a positive integer, $P\subseteq \{p: v_p(N) \ge u'_0 \} \cup \{p: p\nmid N\}$, $n'_1(S) = \min\{v_p(N): p\in B'\setminus S_1\}$ [for the definition of $B'$, cf. Definition \ref{1-2-def}, (e)], $n'_0 = 27$ if $3\in P$ and $n'_0 = 1$ if $2\notin P$.

\item [(c)] In the situation of (b).
if $l\nmid rs$ for each $l\in S$, then we can let $u'_0 = \min\{3r, s\}$, $n'_1(S) = u'_0 p_0$, $P=\{p: p\mid xy\}$, $N' = (n'_0)^{-1} x^{3r} y^{s}$;
if $l\nmid r$ for each $l\in S$, then we can let $u'_0 = 2r$, $n'_1(S) = u'_0 p_0$, $P=\{p: p\mid x\}$, $N' = (n'_0)^{-1} x^{3r}$;
if $l\nmid s$ for each $l\in S$, then we can let $u'_0 = s$, $n'_1(S) = u'_0 p_0$, $P=\{p: p\mid y\}$, $N' = (n'_0)^{-1} y^{s}$.
\end{itemize}

Then the assumptions in Definition \ref{1-3-def: basic notation for global inequalities} are satisfied, and we can define $b_1$, $b_2$, $b'_1$. 
Hence if $b_1 \le  1$, $b'_1 < 1$ and $n_k(S)\cdot \log(2) > \frac{b_2}{1-b'_1}$, then by Lemma \ref{1-3-lem: local-global inequalities}, $\log(N')$ cannot belong to the interval $(\frac{b_2}{1-b'_1}, n_k(S)\cdot \log(3) ) .$
\end{lem}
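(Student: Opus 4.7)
The plan is to follow the template of the proof of Lemma 2.1.2, substituting the $2$-torsion local inequality (4.1.2) from Lemma 4.1.1(iii) for its $\mu_6$-counterpart. Because the quantity called $N$ in this lemma is the coprime-to-$2$ part of the naive conductor $n_0^{-1} x^{3r} y^s$, one first rewrites (4.1.2) with $\log\rad$ computed on this truncated $N$; the resulting discrepancy is bounded by the coefficient of $\log\rad(N)$ in (4.1.2) times $\log 2$, and is absorbed into the extra $+\log(2)$ summand of $\Vol(l) \defeq \Vol(\mathfrak{R}_l^{(2)}) + \log(2)$. This delivers an inequality of exactly the form (1.1.1) required by item (c) of Definition \ref{1-3-def: basic notation for global inequalities}, with $\lambda = 6$ and $e_0 = 4$, so the ``Optional'' choices of $a_1(l)$ and $a_4(l)$ apply verbatim.

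After that, the proof is a routine verification of the remaining hypotheses of Definition \ref{1-3-def: basic notation for global inequalities}, exactly parallel to the verification performed explicitly in the proof of Lemma \ref{2-1-lem-2}. For the basic data: since $u_0 \le \min\{3r, s\}$ and $x, y$ are coprime, any prime $p \mid N$ divides exactly one of $x$, $y$, so $v_p(n_0 N) \ge v_p(N) \ge \min\{3r, s\} \ge u_0$; the bound $p \ge p_N = 3$ is automatic since $N$ is coprime to $2$; the choice $n_1(S) = p_0$ is justified by noting that for any $p \in B \setminus S_1 = B \setminus \{3\}$ there exists $l \in S$ with $l \mid v_p(N)$, hence $v_p(N) \ge l \ge p_0$; and $n_k(S) = k(S)$ follows identically once $k$ distinct primes of $S$ are seen to divide $v_p(N)$.

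For the ``Optional'' data in each of situations (a), (b), (c), the checks are immediate from the definitions, with one non-trivial bookkeeping step in situation (c), analogous to the ``second case'' check written out in the proof of Lemma \ref{2-1-lem-2}. For instance, in the sub-case $P = \{p : p \mid xy\}$ with $l \nmid rs$ for all $l \in S$: for any $p \in B' \setminus \{3\}$ and any $l \in S$ with $l \mid v_p(N)$, the value $v_p(N)$ equals either $3r \cdot v_p(x)$ or $s \cdot v_p(y)$; since $l \ge 11$ and $l \nmid 3rs$, this forces $l \mid v_p(x)$ or $l \mid v_p(y)$, hence $v_p(N) \ge \min\{3r, s\} \cdot l \ge u'_0 \cdot p_0 = n'_1(S)$. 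The sub-cases $P = \{p : p \mid x\}$ and $P = \{p : p \mid y\}$ are identical \emph{mutatis mutandis}.

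With all hypotheses of Definition \ref{1-3-def: basic notation for global inequalities} satisfied, the quantities $b_1$, $b_2$, $b'_1$ are well defined, and the interval-exclusion conclusion is an immediate invocation of Proposition \ref{1-3-prop: local-global inequalities}. The proof contains no conceptual obstacle beyond careful bookkeeping; the only step requiring a moment's thought is the absorption of the $\log\rad$-discrepancy into the $+\log(2)$ correction described at the outset, which is the sole place where the $2$-torsion setup differs structurally from the $\mu_6$ setup of Lemma \ref{4-1-lem-2}.
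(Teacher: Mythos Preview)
Your proposal is correct and follows essentially the same approach as the paper, whose proof reads in full: ``The proof follows from Lemma \ref{4-1-lem-1} and is elementary, which is similar to the proof of Lemma \ref{2-1-lem-2}.'' You have simply unpacked this, correctly isolating the one structural novelty---that the $N$ here is the coprime-to-$2$ truncation, so the $\log\rad$ discrepancy with the $N$ of (\ref{4-1-eq2}) must be absorbed into the $+\log(2)$ correction to $\Vol(l)$---and then running the bookkeeping verification of Definition \ref{1-3-def: basic notation for global inequalities} in parallel with Lemma \ref{2-1-lem-2}. One small quibble: the discrepancy you identify is $a_1(l)\cdot\log(2)$, and since $a_1(l)>1$ for $l\ge 11$ this is not literally ``absorbed'' by $+\log(2)$ alone; the paper's own definition $\Vol(l)=\Vol(\mathfrak R_l^{(2)})+\log(2)$ has the same apparent slack, so you are matching the paper faithfully, but strictly speaking the additive correction should be $a_1(l)\log(2)$ (compare the analogous remark in the proof of Lemma \ref{3-1-lem-4}).
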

\begin{proof}
The proof follows from Lemma \ref{4-1-lem-1} and is elementary, which is similar to the proof of Lemma \ref{2-1-lem-2}.
\end{proof}

\begin{prop} \label{4-1-prop}
Let $r,s \ge 4$ be positive integers, $(x,y,z)$ be a triple of positive coprime integers such that $\delta_r x^r + \delta_s y^s = z^3$, where $\delta_r,\delta_s \in \{\pm 1\}$. 

(i) We have $x,y,z \ge 2$, $\max\{r,s\} \ge 7$ and $3 \nmid rs$;
when $r \in \{4,5\}$, we have $7 \le s \le 3677$;
when $r\ge 4$, $s\ge 7$, we have $\log(x^{3r} y^s) < 31000$.

(ii) Suppose that $r,s \ge 7$, Then we have $7\le r,s \le 1226$, $\log(x^r), \log(y^s) \le 1400$,

(iii) Suppose that $r\ge 7$, $s\ge 21$. Let $x_{(2)}, y_{(2)}$ be the coprime to $2$ part of $x$ and $y$ respectively. 
Then we have $\log(x_{(2)}^{3r}  y_{(2)}^s ) \le 1481$, $ \log(y_{(2)}^{3s}  x_{(2)}^r ) \le 1207$, $\log(x_{(2)}^r) \le 494$ and $\log(y_{(2)}^s) \le 403$.
\end{prop}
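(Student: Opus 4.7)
The plan is to deduce each assertion from the algorithmic framework of Lemmas \ref{4-1-lem-2} and \ref{4-1-lem-3}, together with the catalogue of already solved signatures recorded in the Introduction and the interval-elimination procedure of Remark \ref{1-3-rmk:  local-global inequalities}.

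First I would dispose of the structural part of (i). The inequalities $x, y, z \ge 2$ follow from Catalan's conjecture, since the Catalan triple $(1, 2, 3)$ is incompatible with $r, s \ge 4$ and a cube on the right hand side. If $3 \mid r$, writing $r = 3r'$ recasts the equation as $\delta_r (x^{r'})^3 + \delta_s y^s = z^3$, a signature $(3, s, 3)$ equation already resolved by Chen--Siksek and the author's earlier paper \cite{Diophantine_after_IUT_I}; symmetrically for $3 \mid s$. Hence $3 \nmid rs$. If in addition $\max\{r, s\} \le 6$, then $(r,s) \in \{(4,4),(4,5),(5,4),(5,5)\}$, and each corresponding signature $(4,4,3), (4,5,3), (5,4,3), (5,5,3)$ is covered by the $(n,n,3)$ result and by the $(3,4,5)$ result of Siksek--Stoll. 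Therefore $\max\{r,s\} \ge 7$.

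For the quantitative bounds in (i) and (ii), I would invoke Lemma \ref{4-1-lem-2}. For general $r \ge 4$, $s \ge 7$ (case (i)) I take situation (a) with $u'_0 = u_0 = \min\{3r, s\} \ge 7$, $P = \{p : p \mid N\}$, $N' = N$, a carefully chosen finite set $S$ of primes $\ge 11$ avoiding $13$, and $e_0 = 12$. Applying Proposition \ref{1-3-prop: local-global inequalities} iteratively, as sanctioned by Remark \ref{1-3-rmk:  local-global inequalities}, excludes successive intervals for $\log(N)$ and compresses the bound down to $\log(x^{3r}y^s) < 31000$; for $r \in \{4,5\}$ fixed, the same machinery with $s$ as the only large free exponent, combined with $y \ge 2$, yields $s \le 3677$. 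For (ii), with $r, s \ge 7$ the refined situation (c) of Lemma \ref{4-1-lem-2} is available: take $S$ among primes $l \ge 11$, $l \ne 13$, $l \nmid rs$, and run the algorithm twice --- once with $u'_0 = 2r$, $P = \{p \mid x\}$ to bound $\log(x^r) \le 1400$, and once with $u'_0 = s$, $P = \{p \mid y\}$ to bound $\log(y^s) \le 1400$. From $x, y \ge 2$ this already forces $r, s < 2020$, and a second pass of the elimination loop with $u_0$ enlarged to match tightens the exponent bound to $r, s \le 1226$.

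For (iii) I would switch to the $2$-torsion variant Lemma \ref{4-1-lem-3}, which discharges the prime $2$ from the ramification data. Situation (c) applied to the equation as written, with $S$ consisting of primes $l \ge 11$, $l \ne 13$, $l \nmid rs$ and $u'_0 = \min\{3r, s\} \ge 21$, $P = \{p \mid xy\}$, yields $\log(x_{(2)}^{3r} y_{(2)}^s) \le 1481$. Rewriting the equation with the roles of $x$ and $y$ swapped (permissible since $s \ge 21 \ge 7$ and $r \ge 7$) and repeating gives $\log(y_{(2)}^{3s} x_{(2)}^r) \le 1207$. The sharper one-variable statements follow from the single-variable specialisations of Lemma \ref{4-1-lem-3} (c), namely $P = \{p \mid x\}$, $u'_0 = 2r$ (after the swap) for $\log(x_{(2)}^r) \le 494$, and $P = \{p \mid y\}$, $u'_0 = s$ for $\log(y_{(2)}^s) \le 403$, again via the same interval-elimination loop.

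The main obstacle is computational rather than conceptual: at each invocation one must select $(S, k, u_0, u'_0, P)$ so that $b_1 \le 1$ and $b'_1 < 1$ with $b_2/(1 - b'_1)$ strictly smaller than the length of the interval $(\,b_2/(1-b'_1),\, n_k(S) \log p_N\,)$ that Proposition \ref{1-3-prop: local-global inequalities} is going to eliminate, and then chain several such exclusions to drive the bound down from the implicit analytic-number-theory ceiling (whose explicit value is suppressed as per Remark \ref{1-3-rmk:  local-global inequalities}) to the advertised numerical constants. The optimisation over admissible parameter families and the execution of the elimination loop is carried out on computer in \cite{code_of_zpzhou}.
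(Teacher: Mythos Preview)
Your proposal is correct and follows essentially the same approach as the paper: the paper's own proof is the single sentence ``The proposition follows from Lemma \ref{4-1-lem-2}, Lemma \ref{4-1-lem-3} and the computation in \cite{code_of_zpzhou}'', and you have spelled out explicitly which situations (a), (c) of those two lemmas to invoke for each numerical bound, together with the structural reductions ($x,y,z\ge 2$ via Catalan, $3\nmid rs$ and $\max\{r,s\}\ge 7$ via the catalogue of solved signatures) that the paper leaves implicit. The only thing to flag is that a couple of your parameter choices (e.g.\ ``$u'_0=2r$ after the swap'') do not literally match the wording of Lemma \ref{4-1-lem-3}(c), but this is cosmetic and does not affect the argument, since any $u'_0\le 3r$ (resp.\ $\le s$) is admissible there and the actual optimisation is delegated to \cite{code_of_zpzhou} anyway.
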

\begin{proof}
The proposition follows from Lemma \ref{4-1-lem-2}, Lemma \ref{4-1-lem-3} and the computation in \cite{code_of_zpzhou}.
\end{proof}

\subsection{Structure of solutions}

We shall analyze the structure of the solution $(x,y,z)$ in more detail.

\begin{lem}  \label{4-2-lem-1}
Let $r \ge 7$, $s\ge 8$ be positive integers, $(x,y,z)$ be a triple of positive coprime integers such that $\delta_r x^r + \delta_s y^s = z^3$, where $\delta_r,\delta_s \in \{\pm 1\}$. 

For each prime number $p\ge 17$, denote [cf. Lemma \ref{4-1-lem-1}]
\small\begin{align*}
a_1(p) &\defeq 6 \cdot \frac{p^2+5p}{p^2+p-12}\cdot (1-\frac{1}{4p}),  \\
a_2(p) &\defeq 6 \cdot \Vol(\mathfrak R_p^{(2)})  + a_1(p)\cdot \log(6), 3 \cdot \Vol(\mathfrak R_p^{(3)}) + a_1(p)\cdot \log(6p) \} .
\end{align*}\normalsize
Then we have $6 < a_1(p) < 7.6$, and we have 
$a_2(17) < 403$, $a_2(19) < 425$, $a_2(23) < 472$, 
$a_2(29) < 544$, $a_2(31) < 578$.

Let $l\ge 17$ be a prime number such that $l\nmid rs$.
Write uniquely \small\begin{gather}  \label{4-2-eq1}
x = x_1 \cdot 2^{r_2} \cdot 3^{r_3} \cdot l^{r_l} \cdot x_l^{l}, \;
y = y_1 \cdot 2^{s_2} \cdot 3^{s_3} \cdot l^{s_l} \cdot y_l^{l},
\end{gather}\normalsize
where $x_1, y_1, x_l, y_l, 2, 3, l$ are pairwise coprime positive integers, $r_2, s_2, r_3, s_3, r_l, s_l \ge 0$, $x_l^l = x_{(6l)}^{\lagl}$, $y_l^l = y_{(6l)}^{\lagl}$.

(i) We have $3\nmid rs$. If $r = 7$, then $s \ge 11$. 
Moreover, we have
\begin{equation}\label{4-2-eq2}
\begin{aligned}
3r \cdot \log(x_1) + s \cdot \log(y_1) \le a_2(l) + a_1(l)\cdot \log\rad(x_1 y_1 x_l y_l), \\
r \cdot \log(x_1) + 3s \cdot \log(y_1) \le a_2(l) + a_1(l)\cdot \log\rad(x_1 y_1 x_l y_l).
\end{aligned}
\end{equation}
Hence 
{\small
\begin{equation}\label{4-2-eq2b}
\begin{aligned}
(3r-7.6) \cdot \log(x_1) \le 3r \cdot \log(x_1) - a_1(l) \cdot \log\rad(x_1) \le a_2(l) +  a_1(l) \log(x_l y_l),  \\
(\frac{20s}{7}-7.6) \cdot \log(y_1) \le \frac{20s}{7}\cdot \log(y_1) - a_1(l) \cdot \log\rad(y_1) \le a_2(l) +  a_1(l) \log(x_l y_l).
\end{aligned}
\end{equation} }

(ii) We have $x_l = 1$, $y_l = 1$, hence we have  $a_1(l) \cdot \log(x_l y_l) = 0$ in (\ref{4-2-eq2b}).

(iii) Let $p\ge 5$, $p\neq l$ be a prime number. 
If $l \nmid v_p(x)$, then we have $(3r\cdot v_p(x) - a_1(l))\cdot  \log(p) < a_2(l)$;
if $l \nmid v_p(y)$, then we have $(\frac{20}{7}\cdot s \cdot v_p(x) - a_1(l)) \cdot  \log(p) < a_2(l)$;
if $3\mid x$ and $l\nmid (3r_3r-3)$, then we have $(3r_3r - 3 - a_1(l))\cdot  \log(3) < a_2(l)$;
if $3\mid y$, $l\nmid (s_3s-3)$ and $l\nmid (3s_3s - 1)$, then we have $(\frac{20}{7}  s_3s - 3 - a_1(l))\cdot  \log(3) < a_2(l)$.

(iv) We have $r_l \le \frac{56}{r}$, $s_l \le \frac{56}{s}$.

(v) We have $r_3 \le \frac{153}{r}$, and $r_3 = 0$ when $r > 137$;
$s_3 \le \frac{153}{s}$, and $s_3 = 0$ when $s > 137$.
\end{lem}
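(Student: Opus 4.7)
The strategy parallels Lemma~\ref{2-2-lem-1}, with the master input being the $2$-torsion inequality (\ref{4-1-eq2}) of Lemma~\ref{4-1-lem-1}(iii) applied to the decomposition (\ref{4-2-eq1}). The numerical bounds on $a_1(p)$ and $a_2(p)$ follow from the monotonicity $\tfrac{p^2+5p}{p^2+p-12}\to 1^+$, $1-\tfrac{1}{4p}\to 1^-$, together with Corollary~\ref{1-cor: The log-volume of ramification dataset} applied to the finite combinatorial datasets $\mathfrak{R}_p^{(2)}$ and $\mathfrak{R}_p^{(3)}$; I will treat these as a numerical input supplied by the accompanying computation.

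For assertion (i), the exclusions $3\nmid rs$ and $s\ge 11$ when $r=7$ come from the signatures $(3,3,n)$ and $(3,7,n)$ with $n\le 10$ already enumerated in the Introduction. To derive (\ref{4-2-eq2}), I will expand $N = x^{3r}y^s/\gcd(27,x^{3r}y^s)$ via (\ref{4-2-eq1}): the $l$-part $N_l$ contains $x_l^{3rl}y_l^{sl}$, so $N'_{2l}$ contains $x_1^{3r}y_1^s$ up to a bounded $3$-power; meanwhile $\log\rad(N)\le\log\rad(x_1 y_1 x_l y_l)+\log(6l)$, and the overhead $a_1(l)\log(6l)+6\Vol$ is exactly what the definition of $a_2(l)$ is designed to absorb. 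The two orderings in (\ref{4-2-eq2}) reflect the symmetric role of the exponents $r$ and $s$ in the Frey curve. The first inequality of (\ref{4-2-eq2b}) follows by subtracting $a_1(l)\log\rad(x_1)\le a_1(l)\log x_1$ and dropping the nonnegative term $(s-a_1(l))\log y_1$. The subtle step is the second inequality of (\ref{4-2-eq2b}): the naive dual argument would require $r-a_1(l)\ge 0$, which fails at $r=7$ since $a_1(l)$ may approach $7.6$. My remedy is the linear combination $1\cdot(\text{first})+13\cdot(\text{second})$ of (\ref{4-2-eq2}), divided by $14$, which produces $\tfrac{8r}{7}\log x_1+\tfrac{20s}{7}\log y_1$ on the left; since $\tfrac{8r}{7}\ge 8>a_1(l)$ for $r\ge 7$, I can absorb $a_1(l)\log\rad(x_1)$ and drop the $\log x_1$ term, yielding the stated bound.

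For (ii), I will choose an auxiliary prime $q\ge 17$ with $q\ne l$, $q\nmid rs$, $q\nmid x_l y_l$; such $q\le 31$ is available because $r,s$ are bounded by Proposition~\ref{4-1-prop} and $x_l y_l$ is small a priori. Applying (\ref{4-2-eq2b}) at $q$ (where $x_l^l$ now belongs to the $q$-analogue of $x_1$) yields $(3rl-a_1(q))\log x_l\le a_2(q)\le a_2(31)<578$, which forces $x_l<5$; then $\gcd(6l,x_l)=1$ gives $x_l=1$, and similarly $y_l=1$. For (iii), with $x_l=y_l=1$, I isolate a prime $p$ by writing $x_1=p^{v_p(x)}x_1'$: the coefficients $3r,s>a_1(l)$ let me absorb $\log\rad(x_1')$ and $\log\rad(y_1)$, so (\ref{4-2-eq2}) collapses to $(3r\,v_p(x)-a_1(l))\log p\le a_2(l)$; the $\tfrac{20s}{7}$ version for $p\mid y$ uses the weighted combination from (i), and the $-3$ offsets at $p=3$ come from the factor $\gcd(27,x^{3r}y^s)$ stripping $3^3$ from the $3$-content of $N$.

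Finally, (iv) and (v) are corollaries of (iii) applied with $p=l$ and $p=3$, together with an auxiliary prime $q\le 31$ playing the role of $l$: the inequalities $(3r\,r_l-a_1(q))\log l\le a_2(q)$ and $(3r_3 r-3-a_1(q))\log 3\le a_2(q)$ yield $r_l\le 56/r$, $r_3\le 153/r$, and $r_3=0$ once $r>137$, by direct substitution using $l\ge 17$ and $a_2(q)<578$. The main obstacle I anticipate is the combinatorial bookkeeping of producing a valid auxiliary prime $q\le 31$ satisfying all the simultaneous divisibility constraints on $rs$, $l$, $x_l y_l$, and the relevant $v_p$'s; once such a $q$ is pinned down, the uniform bound $a_2(q)\le a_2(31)<578$ makes every numerical conclusion routine.
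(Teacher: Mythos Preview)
Your overall strategy matches the paper's, and the treatment of (i), (iii), (iv), (v) is essentially correct. The genuine gap is in assertion (ii). When you apply (\ref{4-2-eq2b}) at the auxiliary prime $q$, the right-hand side is $a_2(q) + a_1(q)\log(x_q y_q)$, not $a_2(q)$ alone; you silently drop the overhead term, which is circular since $x_q,y_q$ are exactly the quantities you are trying to show equal $1$. The paper breaks this circularity by first invoking Proposition~\ref{4-1-prop}(iii) to get the crude a~priori bound $\log(x_l)\le \tfrac{494}{rl}$, hence $x_l,y_l\le 61$ uniformly in $l\ge 17$, so that $a_1(q)\log(x_q y_q)<63$ and the working constant becomes $641$ rather than $578$. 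The same a~priori bound is needed a second time to verify $\gcd(x_l,x_q)=1$: if some prime $p$ divided both, then $ql\mid v_p(x)$ would force $\log(x_{(2)}^r)\ge rql\log 5>494$, contradicting Proposition~\ref{4-1-prop}. Without this coprimality you cannot conclude $x_l^l\mid x'_1$.

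Even with the corrected constant $641$, the inequality $(3rl-7.6)\log(x_l)<641$ at $r=7$, $l=17$ only gives $\log(x_l)<1.84$, i.e.\ $x_l\in\{1,5\}$, so the case $r=7$ genuinely requires the extra step the paper supplies (refining $q\le 23$ once $x_l\le 5$ and $y_l=1$ are known). Finally, in (iv) your one-shot substitution with $a_2(q)<578$ yields $r_l r\lesssim 70$, not $56$; the paper iterates once (first $r_l\le 10$, then $q\le 29$, giving $a_2(q)<544$) to reach the stated bound.
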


\begin{proof}
For assertion (i), since all the signatures $(3,3,n)$for $n\ge 3$, $(3,m,n)$ for $7\le m,n \le 10$ and their permutations have been solved [cf. Introduction], we can deduce that $s\ge 11$, $3\nmid rs$. 
Since $l\nmid rs$, following the notation of Lemma \ref{4-1-lem-1}, we have $x_1^{3r} y_1^s \mid N'_l$, $\log\rad(N) \le \log\rad(x_1 y_1 x_l y_l) + \log(6)$ when $l\nmid xy$, and $\log\rad(N) \le \log\rad(x_1 y_1 x_l y_l) + \log(6l)$ when $l\mid xy$. Hence the first inequality in (\ref{4-2-eq2}) follows from (\ref{4-1-eq2}). Then since $s > 8 > a_1(l)$, we have $s\cdot \log(y_1) \ge a_1(l) \cdot \log(y_1)$, hence $a_2(l) +  a_1(l) \log(x_l y_l) \ge 3r \cdot \log(x_1) - a_1(l) \cdot \log\rad(x_1) \ge (3r-7.6) \cdot \log(x_1)$.

By symmetry, if we ``interchange''  $(x,r)$ and $(y,s)$ in Lemma \ref{4-1-lem-1}, then the second inequality in (\ref{4-2-eq2}) follows from (\ref{4-1-eq2}). If $r\ge 8$, then the second inequality in (\ref{4-2-eq2b}) can be proved by symmety. Suppose that $r=7$, then by computing $\frac{1}{14}\times \text{``the first line of  (\ref{4-2-eq2})''} + \frac{13}{14}\times \text{``the second line of  (\ref{4-2-eq2})''}$, 
we have \small\begin{align*}
8 \cdot \log(x_1) + \frac{20s}{7} \cdot \log(y_1) \le a_2(l) + a_1(l)\cdot \log\rad(x_1 y_1 x_l y_l).
\end{align*}\normalsize
Then assertion (i) follows.

For assertion (ii), recall that we have $\log(x_1 3^{r_3} l^{r_l }x_l^l) = \log(x_{(2)})\le \frac{494}{r}$, hence $x_l \le \exp(\frac{494}{rl} ) \le \exp(\frac{494}{7\cdot 17} ) < 64$. Since $2,3\nmid x_l$, we have $x_l \le 61$.
By symmetry, we can prove that $y_l \le 61$, and note that this is ture for any prime number $l\ge 17$.

When $r \ge 11$, we have $\log(x_l) \le \frac{494}{100 l} < \log(3)$, hence $x_l = 1$. Now suppose that $r < 100$, and by replcaing $s$ by its prime divisor $\ge 11$, we can assume that $s$ is divided by at most one prime number $\ge 11$.

Let $q$ be the smallest prime number $q\ge 17$, such that $q\neq l$, $q\nmid rs$ and $q \nmid x_l$. Then by the above analysis, at most four prime numbers $p\ge 17$ are excluded, hence we have $q \le 31$, $a_2(q) < 578$.
Write uniquely $x = x'_1 \cdot 2^{r_2} \cdot 3^{r_3}\cdot q^{r_q} \cdot x_q^{q}$, $y = y'_1 \cdot 2^{s_2} \cdot 3^{s_3}\cdot q^{s_q} \cdot y_q^{q}$ similar to (\ref{4-2-eq1}), where $x'_1, y'_1, x_q, y_q, 2, 3, q$ are pairwise coprime, $r_2, s_2, r_3, s_3, r_q, s_q \ge 0$, $x_q^q = x_{(6q)}^{\lagk{q}}$, $y_q^q = y_{(6q)}^{\lagk{q}}$.
Since we have $x_q,y_q \le 61$ [by replacing $x_l,y_l$ by $x_q, y_q$], then by  (\ref{4-2-eq2b}) we have $(3r - 7.6) \cdot \log\rad(x'_1) \le a_2(q) + 7.6 \cdot \log(61^2) < a_2(q) + 63 < 641$.

Note that we have $\gcd(x_l, 6q) = 1$, and we claim that $\gcd(x_l, x_q) = 1$. 
In fact, suppose that we have $p \mid x_l$, $p\mid x_q$ for some prime $p$, then $p\ge 5$ [since $2,3 \nmid x_l$] and $ql \mid v_p(x)$ by definition. 
Hence by Proposition \ref{4-1-prop}, we have $494 > \log(x_{(2)}^2) \ge r\cdot v_p(x)\cdot \log(p) \ge rql \cdot\log(5) > 7\cdot 11\cdot 17\cdot \log(5) > 494$ --- a contradiction! Hence we have $\gcd(x_l, x_q) = 1$.

Since $x_l^l \mid x = x'_1 \cdot 2^{r_2}\cdot 3^{r_3} \cdot q^{r_q} \cdot x_q^{q}$ and $\gcd(x_l, 6 q x_q) = 1$, we can deduce that $x_l^l \mid x'_1$. 
Then by (i), $641 > a_2(q)+a_1(q)\cdot\log(x_q y_q) \ge 3r\cdot \log(x'_1) - 4\log\rad(x'_1)  \ge (3rl-7.6) \log(x_l)$, thus 
\small\begin{align}  \label{4-2-eq3}
\log(x_l) \le \frac{ a_2(q)+a_1(q)\cdot\log(x_q y_q)}{3r l-7.6} 
< \frac{641}{3r l-7.6}.
\end{align}\normalsize
Hence when $r \ge 8$, we have $\log(x_l) \le \frac{ 641}{3\cdot 8\cdot 17 - 7.6} < \log(5)$, hence $x_l = 1$; then by the symmetry of $r,s$ [since both of them $\ge 8$, we can interchange $(x,r)$ and $(y,s)$, we shall use the word ``symmetry'' to represent this observasion], we have $y_l = 1$.

Now assume that $r = 7$, then $s\ge 11$. We can similar prove that $x_l = 1$ when $l\ge 23$; $x_l \in \{1,5\}$ when $l\in \{17,19\}$. Since $s\ge 11$, we can also prove similarly that 
\small\begin{align*} 
\log(y_l) \le \frac{ 641}{\frac{20s}{7}\cdot l-7.6}
\le \frac{ 641 }{\frac{20\cdot 11}{7}\cdot 17 - 7.6} < \log(4),
\end{align*}\normalsize
Hence $y_l = 1$. Moreover, when $r = 7$, $x_l \le 5$, we further have $q \le 23$, $a_2(q) < 472$. Hence by (\ref{4-2-eq3}) we have $\log(x_l) < \frac{472 + 7.6 \cdot \log(5)}{3\cdot 7\cdot 17 - 7.6} < \log(4)$, hence $x_l = 1$. This proves assertion (ii).

Aassertion (iii) is a direct consequence of Lemma \ref{4-1-lem-1}). The proof of assertion (iii) is similar to that of assertion (i).

For assertion (iv), let $q$ be the smallest prime number $q\ge 17$, such that $q\neq l$, $q\nmid rs$ and $q \nmid r_l$. 
Write uniquely $x = x'_1 \cdot 2^{r_2} \cdot 3^{r_3}\cdot q^{r_q}$ similar to (\ref{4-2-eq1}), where $2,3,q \nmid x'_1$, $r_2, r_3, r_q \ge 0$ [recall that we have proven $x_q = y_q = 1$ in assertion (ii)].
Then by assertion (iii), we have $(3rr_l - a_1(q))\cdot \log(l) \le a_2(q)$, thus 
\small\begin{align}  \label{4-2-eq4}
r_l \le \frac{a_2(q)/\log(l)+a_1(q)}{3r} .
\end{align}\normalsize

One can easily show that $r_l < 100$ at first.
Then similar to the proof of assertion (ii), in the choice of $q$, we can replace $r,s$ by their prime divisors $\ge 17$, to exclude at most $4$ prime numbers $p\ge 17$, such that $p\mid l rs r_l$. 
Hence $q \le 31$, $a_1(q) < 578$, $r_l \le \frac{578/\log(17) + 7.6 }{3\cdot 7} < 11$, $r_l \le 10$. 
Then we have $q \le 29$, hence 
$$r_l r \le \max_{q\in \{17,19,23,29\}} \frac{a_2(q) / \log(q) + 7.6}{3} < 57,$$
thus $r_l \le \frac{56}{r}$.

For the upper bound of $s_l$, when $r \ge 8$, we have $s_l \le \frac{56}{s}$ by symmetry. Now assume that $r = 7$, $s\ge 11$, And let $q$ be the smallest prime number $q\ge 17$, such that $q\neq l$, $q\nmid rs$ and $q \nmid s_l$. Then we can porve easily that $q\nmid r, s_l$, hence we have $q\le 23$.
Then  
$$s_l s \le \max_{q\in \{17,19,23\}} \frac{a_2(q) / \log(q) + 7.6}{20 / 7} < 56,$$
thus $s_l \le \frac{55}{s} < \frac{56}{s}$.

The proof of assertion (v) is similar to that of assertion (iv), cf. the computation in \cite{code_of_zpzhou} for details.
\end{proof}

\begin{lem}  \label{4-2-lem-2}
Let $r \ge 7$, $s\ge 8$ be positive integers, $(x,y,z)$ be a triple of positive coprime integers such that $\delta_r x^r + \delta_s y^s = z^3$, where $\delta_r,\delta_s \in \{\pm 1\}$. 
Write $r_2 \defeq v_2(x)$, $s_2 \defeq v_2(y)$.  
Then we have $r_2 \le \frac{667}{r}$, $s_2 \le \frac{667}{s}$.
\end{lem}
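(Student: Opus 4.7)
The strategy is to bound $r_2$ and (symmetrically) $s_2$ by applying the $\mu_6$-version inequality (\ref{4-1-eq1}) rather than the $2$-torsion version (\ref{4-1-eq2}), because in (\ref{4-1-eq1}) the quantity $N'_l$ retains the $2$-adic part of $N$---whereas $N'_{2l}$ in (\ref{4-1-eq2}) strips it away---so (\ref{4-1-eq1}) can be converted into a direct bound on $v_2(N)$. Note first that $\gcd(x,y)=1$, since any common prime divisor of $x, y$ would divide $z^3$, contradicting $\gcd(x,y,z)=1$. Hence at most one of $r_2, s_2$ is positive, and the two bounds can be established by essentially the same computation.

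Assume $r_2 \ge 1$, so $s_2 = 0$ and $v_2(N) = 3 r r_2$. First I would choose an auxiliary prime $l \ge 17$ with $l \ne 13$ and $l \nmid 6 r s r_2$; by the prior bounds on $r, s$ furnished by Proposition \ref{4-1-prop} and the fact that $r_2$ is a priori bounded, only finitely many primes are excluded, so such $l$ can always be found in the small range $\{17, 19, 23, 29, 31, \ldots\}$. Under these conditions Lemma \ref{4-2-lem-1}(ii) gives $x_l = y_l = 1$, while $l \nmid v_2(N)$ implies $2^{3 r r_2} \mid N'_l$. Together with $x_1^{3r} y_1^s \mid N'_l$ (valid since $l \nmid 3r$ and $l \nmid s$) and the radical bound $\log \rad(N) \le \log(6l) + \log(x_1 y_1)$, substituting into (\ref{4-1-eq1}) and setting $a_1(l) = \frac{l^2+5l}{l^2+l-12}\bigl(1 - \frac{1}{12 l}\bigr)$ yields
\[
3 r r_2 \log 2 + (3r - 6 a_1(l)) \log x_1 + (s - 6 a_1(l)) \log y_1 \le 6 a_1(l) \log(6 l) + 6 \Vol(\mathfrak{R}_l^{(1)}).
\]

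Since $6 a_1(l) < 7.6$ for $l \ge 17$, and both $3r \ge 21$ and $s \ge 8$ exceed this, the coefficients of $\log x_1$ and $\log y_1$ on the left are positive and these terms may be discarded, leaving the purely numerical inequality $3 r r_2 \log 2 \le 6 a_1(l) \log(6 l) + 6 \Vol(\mathfrak{R}_l^{(1)})$. Evaluating the right-hand side for $l$ in the finite set of candidate primes yields $r r_2 \le 667$. The analogous argument in the case $r_2 = 0$, $s_2 \ge 1$ (where now $v_2(N) = s s_2$) produces $s s_2 \log 2 \le 6 a_1(l) \log(6 l) + 6 \Vol(\mathfrak{R}_l^{(1)})$, and the same numerical computation gives $s s_2 \le 667$.

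The main obstacle is verifying the constant $667$ uniformly across all choices of auxiliary prime: the smallest admissible $l$ depends on $r, s, r_2$ via the divisibility condition $l \nmid 6 r s r_2$, and in the worst case every small prime in $\{17, 19, 23, 29, 31\}$ may be excluded, forcing one to larger $l$ for which $\Vol(\mathfrak{R}_l^{(1)})$ is correspondingly larger. As in the preceding lemmas, the remaining cases are to be handled by enumerating the finitely many bad configurations of $(r, s, r_2)$ and checking the numerical inequality directly, using the computations in \cite{code_of_zpzhou}; this parallels the pattern already used in Lemmas \ref{2-2-lem-2} and \ref{4-2-lem-1}.
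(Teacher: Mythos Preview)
Your treatment of the $r_2$ bound is essentially the paper's own argument: use the $\mu_6$-inequality (\ref{4-1-eq1}) so that the $2$-adic contribution survives in $N'_l$, invoke Lemma~\ref{4-2-lem-1}(ii) to kill $x_l,y_l$, and discard the non-negative $x_1,y_1$ terms to isolate $3rr_2\log 2$ on the left. That part is fine.

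The gap is in your $s_2$ case. With the \emph{same} Frey curve $(a,b,c)=(\delta_r x^r,\delta_s y^s,z)$ you correctly get $v_2(N)=s s_2$ (not $3ss_2$), and your displayed inequality
\[
s s_2 \log 2 \;\le\; 6a_1(l)\log(6l)+6\Vol(\mathfrak R_l^{(1)})
\]
is right; but ``the same numerical computation'' does \emph{not} give $ss_2\le 667$. The right-hand side is the paper's $a_2(l)$, which for the admissible $l\le 29$ is only bounded by $1389$, so your inequality yields $ss_2\le 1389/\log 2\approx 2004$. You are off by exactly the factor of $3$ that you had for $r_2$ via $v_2(N)=3rr_2$.

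The paper recovers that factor by swapping the roles of $(x,r)$ and $(y,s)$ in Lemma~\ref{4-1-lem-1}: take $(a,b,c)=(\delta_s y^s,\delta_r x^r,z)$, so the new denominator is $|y^{3s}x^r|/\gcd(27,\cdot)$ and $v_2(N)=3ss_2$. Then the identical manipulation gives $3ss_2\log 2\le a_2(l)$ and hence $ss_2\le 667$. However, after the swap the term $(r-6a_1(l))\log x_1$ has a negative coefficient when $r=7$ (since $6a_1(l)<7.6$ but $>7$), so it cannot simply be dropped. The paper therefore treats $r=7$ separately: using $s\ge 11$ (Lemma~\ref{4-2-lem-1}(i)) and the convex-combination trick behind (\ref{4-2-eq2b}), one gets $\frac{20s}{7}\,s_2\log 2\le a_2(l)$ with $l\le 23$, which still yields $ss_2<595$. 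You need to insert both of these steps to close the argument.
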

\begin{proof}
For each prime number $p\ge 17$, write [cf. Lemma \ref{4-1-lem-1}]
\small\begin{gather*}
a_1(p) \defeq 6 \cdot \frac{p^2+5p}{p^2+p-12}\cdot (1-\frac{1}{12p}), \;
a_2(p) \defeq 6 \cdot \Vol(\mathfrak R_p^{(1)}) + a_1(p)\cdot \log(6p).
\end{gather*}\normalsize
Then we have $6 < a_1(p) < 7.6$, and we have 
$a_2(17) < 978$, $a_2(19) < 1041$, $a_2(23) < 1178$, 
$a_2(29) < 1389$, $a_2(31) < 1475$.
Moreover, we have $x_l = y_l = 1$ by Lemma \ref{4-2-lem-1}.

(1) When $r_2 > 0$, let $l \ge 17$ be the smallest prime numbr such that $l\nmid rs$, $l\nmid r_2$. 
Following the notation of Lemma \ref{4-1-lem-1}, we have $2^{3r_2r}x_1^r \mid N'_l$ and $\log\rad(N) \le \log\rad(6l x_1y_1)$.
Then by (\ref{4-1-eq1}), we have $\log(2^{3r_2r} x_1^{3r} y_1^s) \le a_1(p) \cdot \log\rad(6lx_1y_1) + \Vol(\mathfrak R_l^{(1)}) = a_2(p) + 7.6 \cdot \log\rad(x_1 y_1)$. Hence since $s\ge 8$, we have
\begin{align*}
 a_2(l)  &\ge \log(2^{3r_2r} x_1^{3r} y_1^{s} ) - 7.6 \cdot \log\rad(x_1 y_1) \\
     &\ge 3r_2r\cdot \log(2) + \sum_{p\mid x_1y_1} (v_p(x_1^{3r} y_1^{s}) - 7.6)\cdot\log(p)
     \ge  3r_2r\cdot \log(2) .
\end{align*}
Hence \begin{align} \label{4-2-eq5}
 r_2 \le \frac{a_2(l)/\log(2)}{3r} .
\end{align}
By Proposition \ref{4-1-prop}, we have $r_2r\cdot \log(2) \le \log(x_r) \le 1400$, hence $r_2 \le \frac{1400}{7\cdot \log(2)} < 17 \cdot 19$.
Then by replacing $r$ or $s$ to their prime divisors $\ge 17$, we have exclude at most three possible $l \ge 17$, i.e. we have $l\le 29$, $a_2(l) < 1389$. Hence by (\ref{4-2-eq5}) we have $r_2 \le \frac{1389/\log(2)}{3r} < \frac{668}{r}$, $r_2 \le \frac{667}{r}$.

(2) When $s_2 > 0$, if $r\ge 8$, then we have $s_2 \le \frac{667}{s}$ by the symmetry of $r,s$ when $r,s \ge 8$. 
Now assume that $r=7$, then we have $s\ge 11$ by Lemma \ref{4-2-lem-1}. 

Let $l \ge 17$ be the smallest prime numbr such that $l\nmid rs$, $l\nmid s_2$. 
Similar to the proof of (\ref{4-2-eq2b}) in Lemma \ref{4-2-lem-1}, we can prove that $s_2 \le \frac{a_2(l)/\log(2)}{20s/7}$, which is parallel to the proof of (\ref{4-2-eq5}). 
Moreover, since $r = 7 < 17$, we further have $l \le 23$, $a_2(l) < 1178$.
Hence we have $s_2 \le \frac{a_2(l)/\log(2)}{20s/7} \le \frac{1178/\log(2)}{20s/7} < \frac{595}{s} < \frac{667}{s}$.
\end{proof}

\begin{lem} \label{4-2-lem-3}
Let $s \ge r \ge 7$ be positive integers, $(x,y,z)$ be a triple of positive coprime integers such that $\delta_r x^r + \delta_s y^s = z^3$, where $\delta_r,\delta_s \in \{\pm 1\}$. 

For each prime number $p\ge 17$, denote [cf. Lemma \ref{4-1-lem-1}]
\small\begin{align*}
a_1(p) &\defeq 6 \cdot \frac{p^2+5p}{p^2+p-12}\cdot (1-\frac{1}{4p}),  \\
a_2(p) &\defeq 6 \cdot \Vol(\mathfrak R_p^{(2)})  + a_1(p)\cdot \log(6), 3 \cdot \Vol(\mathfrak R_p^{(3)}) + a_1(p)\cdot \log(6p) \} .
\end{align*}\normalsize
Then we have $6 < a_1(p) < 7.6$, and we have 
$a_2(17) < 403$, $a_2(19) < 425$, $a_2(23) < 472$, 
$a_2(29) < 544$, $a_2(31) < 578$.

Let $l\ge 17$ be a prime number such that $l\nmid rs$.
Write uniquely \small\begin{gather*}
x = x_1 \cdot 2^{r_2} \cdot 3^{r_3} \cdot l^{r_l} \cdot x_l^{l}, \;
y = y_1 \cdot 2^{s_2} \cdot 3^{s_3} \cdot l^{s_l} \cdot y_l^{l},
\end{gather*}\normalsize
where $x_1, y_1, x_l, y_l, 2, 3, l$ are pairwise coprime positive integers, $r_2, s_2, r_3, s_3, r_l, s_l \ge 0$, $x_l^l = x_{(6l)}^{\lagl}$, $y_l^l = y_{(6l)}^{\lagl}$.

(i) We have $x_l = 1$, $y_l = 1$;
$r_2 \le \frac{667}{r}$, $s_2 \le \frac{667}{s}$;
$r_3 \le \frac{153}{r}$, $s_3 \le \frac{153}{s}$;
$r_l \le \frac{56}{r}$, $s_l \le \frac{56}{s}$.
$7\le r \le 667$, and we have $x = 2^{r_2}$ when $r \ge 138$;
$11\le s \le 667$, and we have $y = 2^{s_2}$ when $s \ge 138$.

(ii) We have 
\begin{align*}
(3r-a_1(l)) \cdot \log(x_1) + (s-a_1(l))\cdot \log(y_1) \le a_2(l), \\
(r-a_1(l)) \cdot \log(x_1) + (3s-a_1(l))\cdot \log(y_1) \le a_2(l).
\end{align*}

(iii) When $3r \ge s$, we have \begin{align*}
(\frac{4rs}{r+s} - a_1(l) ) \cdot \log(x_1 y_1)  \le a_2(l).
\end{align*}

\end{lem}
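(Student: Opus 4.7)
The plan is to derive (i) by combining the structural results already proved in Lemmas \ref{4-2-lem-1} and \ref{4-2-lem-2}, and then obtain (ii) as a direct rewrite of Lemma \ref{4-2-lem-1}\,(i) under $x_l = y_l = 1$, and (iii) by a convex combination of the two inequalities in (ii).

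For assertion (i), the bounds $x_l = 1$, $y_l = 1$, $r_l \le 56/r$, $s_l \le 56/s$, $r_3 \le 153/r$, $s_3 \le 153/s$, together with $r_3 = 0$ for $r > 137$ and $s_3 = 0$ for $s > 137$, come directly from Lemma \ref{4-2-lem-1}\,(ii), (iv), (v); the bounds $r_2 \le 667/r$ and $s_2 \le 667/s$ come from Lemma \ref{4-2-lem-2}. For the claim ``$x = 2^{r_2}$ when $r \ge 138$'' only $x_1 = 1$ remains: for any prime $p \mid x_1$ we have $p \ge 5$, $p \ne l$, and $l \nmid v_p(x)$, so Lemma \ref{4-2-lem-1}\,(iii) gives $(3r \cdot v_p(x) - a_1(l)) \log p < a_2(l)$; choosing a small $l \ge 17$ with $l \nmid rs$ and $l \ne p$ — always possible since $rs \le 667^2$ admits only a few prime factors $\ge 17$ — forces $r$ below a threshold well under $138$, the precise case analysis being the same in spirit as the computations of \cite{code_of_zpzhou}. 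The bound $r \le 667$ then follows: if $r > 667$ then $r_2 \le 667/r < 1$ forces $r_2 = 0$ and hence $x = 1$, contradicting $x \ge 2$ from Catalan's conjecture; symmetrically $s \le 667$. The lower bound $r \ge 7$ is the hypothesis, and $s \ge 11$ combines $s \ge r \ge 7$ with Lemma \ref{4-2-lem-1}\,(i) (which gives $s \ge 11$ when $r = 7$) together with the previously solved signatures listed in the Introduction that rule out the remaining small values of $s$ when $r \ge 8$.

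For (ii), I would substitute $x_l = y_l = 1$ into the two inequalities of Lemma \ref{4-2-lem-1}\,(i), namely
\begin{align*}
3r \log(x_1) + s \log(y_1) &\le a_2(l) + a_1(l) \log\rad(x_1 y_1 x_l y_l), \\
r \log(x_1) + 3s \log(y_1) &\le a_2(l) + a_1(l) \log\rad(x_1 y_1 x_l y_l),
\end{align*}
and use $\log\rad(x_1 y_1) \le \log(x_1) + \log(y_1)$; rearranging yields both inequalities of (ii).

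For (iii), I would form the convex combination of the two inequalities of (ii) with weights $\lambda$ and $1 - \lambda$. The coefficient of $\log(x_1)$ becomes $r + 2r\lambda - a_1(l)$ and that of $\log(y_1)$ becomes $3s - 2s\lambda - a_1(l)$; equating them gives $\lambda = (3s - r)/(2(r+s))$, which lies in $[0, 1]$ exactly when $r \le s \le 3r$, i.e.\ under the hypothesis $3r \ge s$ (combined with $s \ge r$). A short algebraic computation then shows the common coefficient equals $4rs/(r+s) - a_1(l)$, yielding the desired inequality. The only mildly delicate point in the whole proof is the finite case analysis that secures $x_1 = 1$ for $r \ge 138$ in (i); everything else is either a citation or routine algebra.
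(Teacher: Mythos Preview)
Your proposal is correct and follows essentially the same approach as the paper: (i) is assembled from Lemmas~\ref{4-2-lem-1} and~\ref{4-2-lem-2}, (ii) is Lemma~\ref{4-2-lem-1}\,(i) with $x_l=y_l=1$, and (iii) is a convex combination of the two inequalities in (ii) with the same weight $\lambda=(3s-r)/(2(r+s))$ (the paper writes $1-\lambda=(3r-s)/(2(r+s))$, which is the complementary parametrization). The only slight variation is in proving $x_1=1$ for $r\ge 138$: you argue prime-by-prime via Lemma~\ref{4-2-lem-1}\,(iii), while the paper uses the global bound $(3r-a_1(l))\log(x_1)\le a_2(l)$ from (ii) directly, picking $l\in\{17,19,23\}$ to get $\log(x_1)<472/(3\cdot 138-7.6)<\log 4$ and hence $x_1=1$; both routes are valid and amount to the same computation.
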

\begin{proof}
Assertions (i) and (ii) follow from Lemma \ref{4-2-lem-1} and Lemma \ref{4-2-lem-2}. Note that when $r \ge 138$, we have $r_3 = r_l = 0$. 
To show $x = 2^{r_2}$, it suffices to prove that $x_1 = 1$ for some $l\ge 17$, $l\nmid rs$. We can assume that each of $r,s$ is divided by at most one prime number $\ge 17$, then we can take $l\in\{17,19,23\}$. Hence $a_2(l) < 472$.
Then $\log(x_1) < \frac{a_2(l)}{3r - 7.6} < \frac{472}{3\cdot 138 - 7.6} < \log(4)$, hence $x_1 = 1$. Then $x = 2^{r_2}$. If $r\ge 668$, then $r_2 = 0$, $x=1$, which is impossible by Catalan's conjecture! Hence $r \le 667$. The results concerning $s$ can be proved similarly.

assertion (iii) is obtained by computing $(1- \lambda )\times \text{``the first inequality of (ii)''} + \frac{13}{14}\times \text{``the second inequality of (ii)''}$, where $\lambda = \frac{3r-s}{2(r+s)}$.
\end{proof}

\begin{prop} \label{4-2-prop}
For any positive integers $(r,s)$, such that $r \ge 12$, $s\ge 30$, or $s > r\ge 18$, there does not exist any triple $(x, y, z)$ of positive coprime integers satisfying  $\delta_r x^r + \delta_s y^s =  z^3$, where $\delta_r,\delta_s \in \{\pm 1\}$. 
\end{prop}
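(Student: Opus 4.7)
The plan is to combine the structural constraints of Lemma \ref{4-2-lem-3} with primitivity and Catalan's conjecture to reduce the claim to a finite computational check. By the symmetry of the equation $\delta_r x^r + \delta_s y^s = z^3$ under the swap $(r, x, \delta_r) \leftrightarrow (s, y, \delta_s)$, I may assume without loss of generality that $r \le s$. The hypothesis then gives $r \ge 12$ and $s \ge r$ in every case, and Lemma \ref{4-1-lem-1}(i) provides $x, y, z \ge 2$ via Catalan's conjecture. In the large-exponent regime $r \ge 138$ (hence $s \ge r \ge 138$), Lemma \ref{4-2-lem-3}(i) immediately forces $x = 2^{r_2}$ and $y = 2^{s_2}$. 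Primitivity $\gcd(x, y, z) = 1$ requires at least one of $r_2, s_2$ to vanish, yielding $x = 1$ or $y = 1$; this contradicts $x, y \ge 2$, settling this regime.

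For the remaining range $12 \le r \le 137$, I expand $x = x_1 \cdot 2^{r_2} \cdot 3^{r_3} \cdot l^{r_l}$ and $y = y_1 \cdot 2^{s_2} \cdot 3^{s_3} \cdot l^{s_l}$ following Lemma \ref{4-2-lem-3}(i), with exponents bounded by $r_2 \le 667/r$, $r_3 \le 153/r$, $r_l \le 56/r$, and the analogues for $y$. I then pick a prime $l \in \{17, 19, 23, 29, 31\}$ with $l \nmid rs$, and apply the inequalities in Lemma \ref{4-2-lem-3}(ii)--(iii), singly or in combination (via convex combinations of the two inequalities in (ii), and via (iii) whenever $3r \ge s$), to pin $x_1$ and $y_1$ down to explicit narrow ranges. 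When $s \ge 138$ we additionally have $y = 2^{s_2}$ so $y_1 = 1$, which collapses the bound in (ii) to a one-sided constraint on $x_1$. A direct enumeration of all surviving $(x, y)$ candidates, testing whether $\delta_r x^r + \delta_s y^s$ is a perfect cube $z^3$ coprime to $xy$, completes the argument; this is the computational content carried out in \cite{code_of_zpzhou}.

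The main obstacle I anticipate is the intermediate window $12 \le r \le 17$ with $30 \le s \le 137$, where neither $x$ nor $y$ is forced into the pure power-of-$2$ form. There both the structural exponent ranges and the logarithmic bounds on $x_1, y_1$ remain comparatively loose, and the enumeration must iterate across several admissible primes $l$ to cut the candidate set down to a tractable size before the cube test is performed.
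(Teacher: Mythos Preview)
Your proposal is correct and follows essentially the same route as the paper: reduce to a finite search via the structural constraints of Lemma~\ref{4-2-lem-3}, then defer to the computation. Your write-up is in fact more explicit than the paper's own proof, which simply says the result follows from the computation and the algorithm of Remark~\ref{2-2-rmk-1}; in particular, your direct elimination of the regime $r \ge 138$ (forcing $x = 2^{r_2}$, $y = 2^{s_2}$, hence a coprimality contradiction) is a clean observation that the paper leaves inside the black box. One small point you do not mention but the paper does: the diagonal case $r = s$ is dismissed outright because the signature $(3,n,n)$ is already settled, so no computation is needed there.
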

\begin{proof}
The proof of the proposition follows from the computation in \cite{Diophantine_after_IUT_I}, where we have $s \neq r$ since the signatures $(3,n,n)$ with $n\ge 3$ are proven.
The algorithm we used is similar to the algorithm described in Remark \ref{2-2-rmk-1}, which is based on Lemma \ref{4-2-lem-3}.
\end{proof}

\begin{cor} \label{4-2-cor}
Suppose that $(r,s,t)$ is a permutation of $(3,m,n)$ with $n\ge m \ge 3$, such that $m, n$ does not satisfy one of the following conditions:
\begin{itemize}
\item $m\in \{4,8,10\}$, and  $11\le n \le 109$ or $n\in \{113, 121\}$.
\item $m = 5$, and $7 \le n \le 3677$; $m\in \{7, 11\}$, and $11 \le n \le 667$.
\item $13 \le m \le 17$, $m < n \le 29$.
\end{itemize}
Then the generalized Fermat equation $x^r + y^s = z^t$ has no non-trivial primitive solution except for the solutions related to the Catalan solutions and the nine non-Catalan solutions enumerated in the introduction.
\end{cor}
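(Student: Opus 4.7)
The plan is to reduce every signature to the canonical form $(3,m,n)$ with $n\ge m\ge 4$, eliminate the trivially reducible cases using cited results, apply Proposition \ref{4-1-prop} and Proposition \ref{4-2-prop} to bound the remaining parameter space, and close the residual small cases by the computer enumeration strategy of Remark \ref{2-2-rmk-1} adapted to the Frey--Hellegouarch curve $Y^2+3cXY+aY=X^3$ associated to Lemma \ref{4-1-lem-1}.

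I would begin by taking a non-trivial primitive solution $(x,y,z)$ to $x^r+y^s=z^t$ whose signature is a permutation of $(3,m,n)$. After relabelling the variables and absorbing signs, the equation becomes $\delta_m \tilde x^m + \delta_n \tilde y^n = \tilde z^3$ with $\tilde x,\tilde y,\tilde z$ positive coprime integers and $\delta_m,\delta_n\in\{\pm 1\}$; by Catalan's conjecture $\tilde x,\tilde y,\tilde z\ge 2$. The reducible signatures are removed by cited results: if $3\mid mn$ or $\min\{m,n\}=3$ the equation descends to signature $(3,3,k)$, settled in Chen--Siksek \cite{Chen-Siksek} and \cite{Diophantine_after_IUT_I}; the diagonal $m=n$ gives $(n,n,3)$, settled by Darmon--Merel \cite{Darmon-Merel} and Poonen \cite{Poonen}; and $(3,4,5)$ is handled in Siksek--Stoll \cite{Siksek-Stoll}. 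After these reductions I may assume $3\nmid mn$, $4\le m<n$, and $(m,n)\ne(4,5)$.

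Next, I would invoke Proposition \ref{4-1-prop} to obtain the explicit upper bounds $n\le 3677$ when $m\in\{4,5\}$, $n\le 1226$ when $m\ge 7$, and the refined bounds on $\log(x_{(2)}^{r})$ and $\log(y_{(2)}^{s})$ from part (iii). Proposition \ref{4-2-prop} then eliminates every pair with $m\ge 12,n\ge 30$ or with $n>m\ge 18$, so only small $m$ or small $n$ survive. For each surviving $(m,n)$ I would run an enumeration analogous to Remark \ref{2-2-rmk-1}, but adapted to the decomposition $x=x_1\cdot 2^{r_2}\cdot 3^{r_3}\cdot \ell^{r_\ell}\cdot x_\ell^{\ell}$ of Lemma \ref{4-2-lem-3}: the vanishing $x_\ell=y_\ell=1$, the caps on $r_2,r_3,r_\ell,s_2,s_3,s_\ell$, and the logarithmic bounds on $x_1,y_1$ reduce the search to finitely many candidate pairs $(x,y)$ coprime to $6\ell$, each of which is tested by checking whether $|\delta_m x^m + \delta_n y^n|$ is a cube.

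The main obstacle is the size of this enumeration when $m$ is small. For $m=5$ the interval $[7,3677]$ is too large to be processed by the present strategy, so the entire range is retained verbatim as the exception; for $m=4$ additional refinement using sharper auxiliary primes $\ell$ and iterated applications of Lemma \ref{4-2-lem-3} trims the range down to $[11,109]\cup\{113,121\}$, and the exception sets for $m\in\{7,11\}$ and $m\in\{8,10\}$ emerge from the same pipeline. The band $13\le m\le 17$ is already cleared by Proposition \ref{4-2-prop} except when $n\le 29$, leaving precisely the interval $m<n\le 29$ advertised in the statement. Full rigour of the computational step is secured by the extension procedure of Remark \ref{1-3-rmk:  local-global inequalities}, which in principle converts any partial bound into an unconditional one.
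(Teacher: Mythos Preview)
There is a genuine gap in your treatment of $m\in\{4,8,10\}$. The exception range $[11,109]\cup\{113,121\}$ does not come from the Section~4 machinery at all. Lemma~\ref{4-2-lem-3} is stated under the hypothesis $s\ge r\ge 7$, so it does not even apply when $m=4$; and for $m=8,10$ the lemma (via Lemma~\ref{4-2-lem-3}(i)) only yields $n\le 667$, the same bound as for $m\in\{7,11\}$. Your claim that ``sharper auxiliary primes $\ell$ and iterated applications of Lemma~\ref{4-2-lem-3}'' trim this down to $[11,109]\cup\{113,121\}$ is unsubstantiated, and the appearance of exactly those numbers should have been a hint: this is precisely the output of the $(2,3,n)$ analysis in Corollary~\ref{3-2-cor}.

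The missing observation is that $4$, $8$, $10$ are even. Any non-trivial primitive solution of $\delta_m x^m+\delta_n y^n=z^3$ with $m\in\{4,8,10\}$ yields, by writing $x^m=(x^{m/2})^2$, a non-trivial primitive solution of signature $(2,3,n)$; Corollary~\ref{3-2-cor} then applies directly and produces the stated exception range. The paper's own proof is exactly this: it cites Corollary~\ref{3-2-cor} alongside Propositions~\ref{4-1-prop} and~\ref{4-2-prop}. Your handling of $m=5$, $m\in\{7,11\}$, and $13\le m\le 17$ is broadly in line with the paper (the bound $667$ for $m\in\{7,11\}$ is Lemma~\ref{4-2-lem-3}(i), and the $n\le 29$ residue for $13\le m\le 17$ is Proposition~\ref{4-2-prop}), but without the reduction to $(2,3,n)$ the argument for $m\in\{4,8,10\}$ does not close.
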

\begin{proof}
cf. Corollary \ref{3-2-cor}, Proposition \ref{4-1-prop} and Proposition \ref{4-2-prop}.
\end{proof}
\section{Conclusion}
In the last three sections, we have researched on the generalized Fermat equation \begin{equation} \label{6-GenFE} \tag{$\star$}
x^r + y^s = z^t, \;\text{with}\; x, y, z\in\Z
\end{equation}
with signature $(r,s,t)$ in three different cases. 
As a summary, we have the following result.

\begin{thm} \label{5-thm1}
Let $r,s,t\ge 2$ be positive integers such that $\frac{1}{r} + \frac{1}{s} + \frac{1}{t} \le 1$. 
Then the generalized Fermat equation (\ref{6-GenFE}) has no non-trivial primitive solution, exceplt for the solutions related to the Catalan solution $1^n+2^3 = 3^2$ and nine non-Catalan solutions listed in the Introduction, when $(r,s,t)$ is not a permutation of the following signatures:
\begin{itemize}
\item $(4,5,n)$, $(4,7,n)$, $(5,6,n)$, with $7 \le n \le 303$.
\item $(2,3,n)$, $(3,4,n)$, $(3,8,n)$, $(3,10,n)$, with $11\le n \le 109$ or $n\in \{113, 121\}$.
\item $(3,5,n)$, with $7\le n \le 3677$; $(3,7,n)$, $(3,11,n)$, with $11 \le n \le 667$.
\item $(3,m,n)$, with $13 \le m \le 17$, $m < n \le 29$; $(2,m,n)$, with $m \ge 5$, $n\ge 7$.
\end{itemize} 
\end{thm}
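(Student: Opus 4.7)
The plan is to derive Theorem \ref{5-thm1} by combining the three principal results of Sections 2--4 (Proposition \ref{2-2-prop}, Corollary \ref{3-2-cor}, and Corollary \ref{4-2-cor}) with the classical resolutions of particular signatures gathered in the Introduction, organizing the case split according to $\min\{r,s,t\}$. Without loss of generality I would assume $r \le s \le t$. The Euclidean signatures $(2,3,6), (2,4,4), (3,3,3)$ yield only solutions related to the Catalan solution by the references cited in the Introduction, so the remaining work is for $\chi(r,s,t) < 0$.

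In the case $r \ge 4$, Proposition \ref{2-2-prop} directly reduces us to the signatures $(4,5,n), (4,7,n), (5,6,n)$ with a bounded range of $n$. Intersecting with the cases already resolved in the Introduction (notably the signatures $(4,5,2), (4,7,2), (5,6,2), (4,5,3)$, and the effective upper bound on $\max\{r,s,t\}$ furnished by Proposition \ref{2-1-prop-bound}, (i)) leaves exactly $7 \le n \le 303$, producing the first exceptional family of the theorem.

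In the case $r = 3$, the signature is a permutation of $(3,s,t)$ with $s \le t$. The subcase $s = 3$ is entirely resolved by the references for $(3,3,n)$ in the Introduction, and the subcase $s \ge 4$ is handled by Corollary \ref{4-2-cor}, whose statement is precisely the three exceptional families $(3,4,n), (3,8,n), (3,10,n)$; $(3,5,n), (3,7,n), (3,11,n)$; and $(3,m,n)$ with $13 \le m \le 17$.

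In the case $r = 2$, the hyperbolic assumption forces $s \ge 3$. For $s = 3$, Corollary \ref{3-2-cor} supplies the exceptional list $(2,3,n)$ with $11 \le n \le 109$ or $n \in \{113,121\}$. The subcases $s = 4$ and $s = 6$ (together with all $(2,n,4)$ and $(2,n,6)$) are completely resolved in the literature cited in the Introduction, and the remaining subcase $s \ge 5$, $t \ge 7$ appears as the open exceptional family $(2,m,n)$ with $m \ge 5$, $n \ge 7$ in the theorem. The only \emph{obstacle} at this stage is the bookkeeping required to verify that the three Propositions/Corollaries, together with the classical list, exhaust every hyperbolic signature outside the stated exceptions; the genuine mathematical content lies entirely in the IUT-based effective bounds carried out in Sections 2--4.
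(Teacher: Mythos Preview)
Your proposal is correct and follows essentially the same approach as the paper: the paper's own proof is a single sentence citing Proposition \ref{2-2-prop}, Corollary \ref{3-2-cor}, and Corollary \ref{4-2-cor}, and you invoke exactly these three results, organized by a case split on $\min\{r,s,t\}$. Your version is simply more explicit about the bookkeeping (the Euclidean boundary cases, the small signatures $(2,5,5)$, $(2,5,6)$, $(2,4,n)$, $(2,6,n)$ absorbed by the classical references), which the paper leaves implicit.
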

\begin{proof}
The theorem follows from Propositions \ref{2-2-prop}, Corollary \ref{3-2-cor}, and Corollary \ref{4-2-cor}.
\end{proof}

\begin{tiny-remark} \label{5-rmk1}
(i) It is worth noting that with further analysis of upper bounds and more computations for searching solutions, an expanded set of signatures including the permutations of $(4, 7, n)$, $(5, 6, n)$ are likely to be solved.

(ii) Similar to the methods in Section 4, for the generalized Fermat equations with signature $(2,m,n)$ and thier permutations, since the signature $(2,4,n)$, $(2,n,4)$, $(2,5,5)$ have been solved, we can deduce that $m,n \ge 5$ and $\max\{m,n\} \ge 7$.

By using Lemma \ref{1-2-cor4}, we can prove upper bounds for $m,n$ similarly. A very rough estimation shows that we have $n < 2600$ for $m = 5$, and $n < 1200$ for $m\ge 7$, the related work will be undertaken in future studies.

(iii) Some of the signatures listed above can be exculded based on the proven signatures.
For example, since the signatures $(2,4,5)$, $(4,5,2)$, $(4,5,3)$, $(2,5,5)$ have been solved [cf. Introduction], permutations of $(4,5,n)$ with $n$ a multiple of $2,3,5$ can also be solved, can the related generalized Fermat equation (\ref{6-GenFE}) has no non-trivial primitive solution in this case; similarly, we can show that for permutations of $(4,7,n)$ with $n$ a multiple of $2,3,7$, and permutations of $(5,6,n)$ with $n$ a multiple of $2,3,5$, (\ref{6-GenFE}) has no non-trivial primitive solution.
\end{tiny-remark}

\begin{cor} \label{5-cor1}
To solve the generalized Fermat equation $x^r + y^s = z^t$ with exponents $r,s,t \ge 4$, we are left with $244$ signatures $(r,s,t)$ up to permutation;
to solve the Beal conjecture, we are left with $2446$ signatures $(r,s,t)$ up to permutation.
\end{cor}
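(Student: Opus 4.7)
The plan is to enumerate the families of signatures left open by Theorem 5.1 and then filter them against the list of already-proven cases recalled in the Introduction, paying attention to permutations and to the reductions described in Remark 5.2.

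First I would list the unsolved signature families from Theorem 5.1, split between ``the exponent-$\ge 4$ part'' and ``the remainder,'' and then take their union up to permutation. For the first claim I would intersect each family with the constraint $\min\{r,s,t\}\ge 4$, which immediately discards all families involving a $2$ or a $3$: only permutations of $(4,5,n)$, $(4,7,n)$, $(5,6,n)$ with $7\le n\le 303$ survive. For the Beal count I would keep in addition every family with smallest exponent $3$, namely permutations of $(3,4,n)$, $(3,5,n)$, $(3,7,n)$, $(3,8,n)$, $(3,10,n)$, $(3,11,n)$ with the ranges stated in Theorem A, plus $(3,m,n)$ with $13\le m\le 17$ and $m<n\le 29$.

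Next I would discard every surviving signature that is a permutation of one already resolved in the Introduction: $(n,n,n)$ for $n\ge 3$; $(n,n,2)$ for $n\ge 4$; $(n,n,3)$ for $n\ge 3$; $(2,3,n)$ for $n\in\{7,8,9,10,15\}$; $(2,4,n)$ and $(2,n,4)$; $(2,6,n)$ and $(2,n,6)$; $(3,3,n)$; $(3,4,5)$, $(5,5,7)$, $(5,7,7)$; and $(5,5,q)$ for primes $q\ge 11$. Using Remark 5.2(iii), this means in $(4,5,n)$ I remove $n$ divisible by $2$, $3$, or $5$; in $(4,7,n)$ I remove $n$ divisible by $2$, $3$, or $7$; in $(5,6,n)$ I remove $n$ divisible by $2$, $3$, or $5$. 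Analogously for the $(3,\cdot,\cdot)$ families, e.g., $(3,4,n)$ with $n$ divisible by $3$ is covered by $(3,3,n')$, and $(3,4,n)$ with $n$ even is covered by $(2,n,4)$; likewise for $(3,5,n)$, $(3,7,n)$, etc.

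For the first count I would then perform an explicit inclusion--exclusion inside each of the three families to count the admissible $n\in[7,303]$, and verify pairwise distinctness of the three families up to permutation (which is immediate: the unordered pair of smallest exponents is $\{4,5\}$, $\{4,7\}$, or $\{5,6\}$ respectively, and any $n\ge 7$ does not collapse these into each other). Summing should yield $244$. For the Beal count I would repeat the same procedure for the additional $(3,\cdot,\cdot)$ families, again with inclusion--exclusion on divisibility conditions coming from already-solved small signatures, and carefully check the small-$n$ boundary cases where $n$ coincides with another exponent (so the triple becomes of the form $(3,m,m)$ and reduces to $(n,n,m')$-type cases). Adding the two groups should yield $2446$.

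The main obstacle is purely combinatorial bookkeeping: ensuring that every triple is counted exactly once up to the $S_3$-action on coordinates, that all reductions to previously solved signatures are applied, and that edge cases (such as $n$ equal to one of the other exponents, or $n$ a prime power of a small prime already solved by an $(n,n,q)$-result) are handled. In practice I would verify the final numbers $244$ and $2446$ by a short computer enumeration over the finite filter sets described above, rather than by hand, and cross-check against the three- and four-family partitions used in the proof of Theorem 5.1.
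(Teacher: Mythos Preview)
Your proposal is correct and matches the paper's own argument: the paper's proof of Corollary~5.1 simply refers to Remark~5.1(iii) for the reductions you describe (removing $n$ divisible by small primes via already-solved signatures) and then defers the final counts $244$ and $2446$ to a computer enumeration in \cite{code_of_zpzhou}. You have essentially written out explicitly the plan that the paper leaves implicit in that remark and in the code reference, including the inclusion--exclusion and the handling of coincidences up to permutation.
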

\begin{proof}
For the case of $r,s,t\ge 4$, cf. the discussion in Remark \ref{5-rmk1}, (iii). 
For the case of Beal conjecture and the computation of the number of unsolved signatures, cf. the computation in \cite{code_of_zpzhou}.
\end{proof}

\bibliographystyle{plain}
\bibliography{zpzhou.bib}

\Addresses

\end{document}